\numberwithin{equation}{section}
\theoremstyle{plain}
\newtheorem{theorem}[equation]{Theorem}
\newtheorem{corollary}[equation]{Corollary}
\newtheorem{proposition}[equation]{Proposition}
\newtheorem{lemma}[equation]{Lemma}
\theoremstyle{definition}
\newtheorem{definition}[equation]{Definition}
\newtheorem{example}[equation]{Example}
\theoremstyle{remark}
\newtheorem{remark}[equation]{Remark}
\renewcommand{\leq}{\leqslant}
\renewcommand{\geq}{\geqslant}
\newcommand{\Cor}{\mathrm{Cor}}
\newcommand{\DM}{\mathbf{DM}}
\newcommand{\Pic}{\operatorname{Pic}}
\newcommand{\pr}{\mathrm{pr}}
\newcommand{\calI}{\mathcal I}
\newcommand{\can}{\mathrm{can}}
\newcommand{\ovX}{\overline{X}}
\newcommand{\wtovX}{\overline{X}}
\newcommand{\calX}{\mathcal X}
\newcommand{\calE}{\mathcal E}
\newcommand{\ovcalX}{\overline{\mathcal X}}
\newcommand{\bbZ}{\mathbb Z}
\newcommand{\Sing}{\mathrm{Sing}}
\newcommand{\Iso}{\mathrm{Iso}}
\newcommand{\calC}{\mathcal C}
\newcommand{\Hom}{\mathrm{Hom}}
\newcommand{\Aut}{\mathrm{Aut}}
\newcommand{\id}{\mathrm{id}}
\newcommand{\Jac}{\operatorname{Jac}}
\newcommand{\Id}{\mathrm{Id}}
\renewcommand{\leq}{\leqslant}
\renewcommand{\geq}{\geqslant}
\newcommand{\GL}{\mathrm{GL}}
\newcommand{\SL}{\mathrm{SL}}
\newcommand{\El}{\mathrm{E}}
\newcommand{\Kth}{\mathrm{K}}
\newcommand{\GW}{\mathrm{GW}}
\newcommand{\Witt}{\mathrm{W}}
\newcommand{\struct}{\mathcal{O}}
\newcommand{\relcurve}{\mathcal{C}}
\newcommand{\relpcurve}{\overline{\mathcal{C}}}
\newcommand{\Spec}{\operatorname{Spec}}
\newcommand{\A}{\mathbb A}
\newcommand{\PP}{\mathbb P}
\newcommand{\SH}{\mathbf{SH}}
\newcommand{\codim}{\operatorname{codim}}
\newcommand{\rank}{\operatorname{rank}}
\newcommand{\chark}{\operatorname{char}}
\newcommand{\etale}{\'etale }
\newcommand{\pour}{\ar@{}[ur]|(0.2){\text{\pigpenfont G}}}
\newcommand{\podr}{\ar@{}[dr]|(0.2){\text{\pigpenfont A}}}
\newcommand{\et}{\mathrm{\acute{e}t}}
\newcommand{\op}{\mathrm{op}}
\newcommand{\lF}{\mathrm{F}}
\newcommand{\Fr}{\mathrm{Fr}}
\newcommand{\Sm}{\mathrm{Sm}}
\newcommand{\Sch}{\mathrm{Sch}}
\newcommand{\SmAff}{\mathrm{SmAff}}
\newcommand{\ZF}{\mathbb Z\lF}
\newcommand{\ZFr}{\mathbb Z\Fr}
\newcommand{\ovZF}{\overline{\ZF}}
\newcommand{\ovZFr}{\overline{\ZFr}}
\newcommand{\calO}{\mathcal O}
\newcommand{\pt}{\mathrm{pt}}
\newcommand{\Zar}{\mathrm{Zar}}
\newcommand{\nr}{\mathrm{nr}}
\newcommand{\ovrelcurve}{\overline{\relcurve}}
\newcommand{\ovC}{\overline C}
\newcommand{\calL}{\mathcal L}
\newcommand{\calR}{\mathcal R}
\newcommand{\Gm}{\mathbb G_m}
\newcommand{\Ker}{\operatorname{Ker}}
\newcommand{\Coker}{\operatorname{Coker}}
\newcommand{\Ab}{\mathrm{Ab}}
\newcommand{\Et}{\mathrm{Et}}
\begin{document}

\title{
Rigidity for smooth affine henselian pairs over a field}
\thanks{The research is supported by the Russian Science Foundation grant 19-71-30002}
\author{A.~Druzhinin}

\address{Chebyshev Laboratory, St. Petersburg State University, 14th Line V.O., 29B, Saint Petersburg 199178 Russia}
\email{andrei.druzh@gmail.com}

\begin{abstract}
Let $Z\to X$ be a closed immersion of smooth affine schemes over an arbitrary field $k$,
and $X^h_Z$ denote the henselization of $X$ along $Z$.
For each presheaf 
$E\colon \mathbf{SH}(k)\to \mathrm{Ab}^\mathrm{op}$
on the stable motivic homotopy category over $k$
and the induced continuous presheaf  
$E\colon \mathrm{EssSm}_k\to \mathrm{Ab}^\mathrm{op}$
on the category of essentially smooth schemes
there is a homomorphism \[E(X^h_Z)\to E(Z).\]
We prove that this is an isomorphism 
for any $l_\varepsilon$-torsion presheaf $E$, 
for $l\in \mathbb Z$, $(l,\chark k)=1$, and $l_\varepsilon=\sum_{i=1}^n \langle (-1)^i \rangle$.
More generally, the isomorphism holds for any 
homotopy invariant $l_\varepsilon$-torsion linear $\sigma$-stable framed additive presheaf $F$ over $k$.
The case of $l$-torsion presheaves follows as well.

The result generalises 
known Gabber's rigidity theorems
for local henselian schemes
to the case of smooth affine henselian pairs.

The above isomorphism is proven by 
constructing 
of 
(stable) $\A^1$-homotopies of motivic spaces via algebro-geometric techniques.
To achieve this in our setting 
we replace often used Quillen's trick
by an alternative construction that provides required smooth relative curves 
over smooth affine schemes for an arbitrary base field. 

\end{abstract}
\keywords{Rigidity theorems, affine henselian pairs, generalised motivic cohomology theories, framed correspondences}
\subjclass[2010]{14F42, 19G38, 19E20, 19G12}

\maketitle

\section{Introduction}

Consider an immersion of (smooth) (compact) manifolds $Z\to X$, and 
the $\varepsilon$-neighbourhood 
\begin{equation}\label{eq:espliontubetopological}X_{Z,d,\varepsilon}=\{x\in X| d(x,Y)<\varepsilon\},\end{equation} 
with respect to some distance $d\colon X\times X\to X$ agreed with the topology on $X$. 
Then $Z$ is a retract of $X_{Z,d,\varepsilon}$ for all small enough $\varepsilon>0$. 
Consequently, for any homotopy invariant presheaf $F$ 
there is the isomorphism $F(Y)\simeq F(X_{Z,d,\varepsilon}).$
In particular, the isomorphism holds for any 
cohomology theory on the category of manifolds (or appropriate topological spaces).
We can write
\begin{equation}\label{eq:topinftuberigid}F(Z)\simeq \varinjlim_{\varepsilon\to +0} F(X_{Z,d,\varepsilon}) ,\end{equation}
then the right side is independent of the choice of the distance $d$ (and $\varepsilon$). 

In the article, we study analogue of the above isomorphism for motivic (generalised) cohomology theories defined on 
the Voevodsky-Morel stable motivic homotopy category over a field $k$ \cite{MV99,Jar00,Mor0}. 
The motivic homotopy category 
$\mathbf H(k)$ and the stable category $\SH(k)$
are considered as analogues to the classical (topological) ones in the 
algebraically geometric setting in view of the following: 
(1) the role of homotopies, 
belongs to so-called $\A^1$-homotopies; 
(2) the role of the topology 
is played by the Nisnevich topology on $\Sm_k$, 
(3) the spheres are the simplicial sphere $S^1$ and the motivic spheres: 
$\mathbb G_m^{\wedge 1}=\Gm/\{1\}$,
and $T\simeq \PP^1/\{\infty\}\simeq \Gm^{\wedge 1}\wedge S^1$. 

\subsection{\'Etale neighbourhoods and Gabber's rigidity}\label{subsect:EtaleNeighGabRig}
One of the geometrical features 
that effect for the choice of the Nisnevich topology 
in the construction of the motivic homotopy category is that 
the topology has a natural notion of 
neighbourhoods of closed subschemes, 
that look like $\varepsilon$-tubes \eqref{eq:espliontubetopological}. 
Namely, these are called \'etale neighbourhoods. 
At the same, time this allows 
to ask
by analogy with \eqref{eq:topinftuberigid} about the isomorphism
\begin{equation}\label{eq:infNisnNeighRigidityisomorphism}E(Z)\simeq \varinjlim_{Z\to V\to X} E(V),\end{equation}
where $V$ runes over the filtering system of the \'etale-neighbourhoods of $Z$ in $X$.
If $F$ is a continuous presheaf on the category of 
schemes, then 
the injective limit in \eqref{eq:infNisnNeighRigidityisomorphism} equals to $F(X^h_Z)$, 
where $X^h_Z=\varprojlim_{V} V$ in the category of schemes. 

The scheme $X^h_Z$ is usually considered in the literature in the affine case.
If $X=\Spec A$ and $Z=\Spec A/I$, for a ring $A$ and an ideal $I\subset A$,
then $X^h_Z=\Spec A^h_I$,
where the henselization $A^h_I$ of the ring $A$ with respect to $I$
is the subring of the $I$-adical completion $A^\wedge_I$ that contains algebraic elements over $A$ that are 
separable over $A/I$.
Then \eqref{eq:infNisnNeighRigidityisomorphism} can be rewritten as
\begin{equation}\label{eq:affineAYXBrigidityisomorphism}F(\Spec A/I)=F(Z)\simeq F(X^h_Z)=F(\Spec A^h_I) .\end{equation}
The pair $(A,I)$ such that $A=A^h_I$ is called \emph{an affine henselian pair or couple}.
One probable reason why the affine henselian pairs are considered much more often 
is that for some examples of projective schemes 
the  \'etale neighbourhoods could become equal to Zariski open neighbourhoods, that are distinctly bigger in the affine case. 

Isomorphism \eqref{eq:affineAYXBrigidityisomorphism}
is called Gabber's rigidity isomorphism 
after the work \cite{Gab92},
where isomorphism \eqref{eq:affineAYXBrigidityisomorphism} 
is proven for the algebraic $\Kth$-theory with finite coefficients 
$\bbZ/l\bbZ$, for an integer $l$ invertible in the (noetherian) ring $A$. 
\cite{Gab92} elaborates the method of Suslin's pioneer works \cite{Sus83,Sus84} 
that contain in particular the proof of the isomorphism for the case of discrete valuation ring.

In the article, we 
consider 
the case of smooth affine schemes $Z,X\in \SmAff_k$, and
presheaves $F\colon \SmAff_k\to \mathrm{Ab}^\mathrm{op}$ over a field $k$.
The immersion $Z\to X^h_Z$ we call \emph{smooth affine henselian pair} over $k$.

The basic case of Gabber's rigidity 
is the case $Z=z$ being a closed point of $X$.
Then $X^h_Z$ is a local henselian scheme, 
that is analogue of
the infinitesimally small disk around the point
in topological terms.
The case of (smooth) local henselian pairs over a field 
was widely studied in literature for different types of presheaves,
see Remark \ref{rem:LocCaseoTheoremsinArticle}, 
and Subsection \ref{subsect:Gabbersrigidityreview}.
In the non-local affine case isomorphism \eqref{eq:affineAYXBrigidityisomorphism} is proven only for few examples of cohomology theories till now.

\subsection{The result.} 
The motivic homotopy theory is 
less flexible, then the classical topological one. 
It is because the condition to be a regular map of algebraic varieties is much stronger then for continuous maps of 
manifolds.
Nevertheless, it demonstrates similar behaviour from some viewpoints.
The isomorphism \eqref{eq:affineAYXBrigidityisomorphism} does not hold for 
a general $\A^1$-homotopy invariant presheaf $F\colon \SmAff_k\to \Ab^\mathrm{op}$, 
or a general additive functor 
$E\colon \SH(k)\to \Ab^\mathrm{op}$,
but it holds under stronger assumptions.

An enough additional assumption is that $E$ is torsion with respect to some element 
$\phi$ in $[\mathbb S,\mathbb S]_{\SH(\mathbb Z)}$ or $[\mathbb S,\mathbb S]_{\SH(k)}$
such that 
$\rank(\phi)\in \mathbb Z$ is invertible on $Z$.
Here $\mathbb S$ denotes the motivic sphere spectrum and
$\rank(-)$ denotes the canonical ring homomorphism
$[\mathbb S,\mathbb S]_{\SH(k)}\to \mathbb Z$ 
induced by the functor 
$\SH(k)\to \mathbf{DM}(k)$ to the category of Voevodsky's motives \cite{VSF00}. 
By Morel's theorem on zero-th stable motivic homotopy groups, see \cite{Mor04}, and \cite[remark 26]{Mor12}, 
there is the isomorphism 
$[\mathbb S,\mathbb S]_{\SH(k)}\simeq \mathrm{GW}(k)$, 
where $\mathrm{GW}(k)$ is the Grothendieck-Witt ring.
In view of this $\rank(\phi)$ is the rank of a quadratic space $\phi\in \mathrm{GW}(k)$.

The classes of $n$-torsion presheaves, for $n\in \mathbb Z$, 
and $n_\varepsilon$-torsion presheaves 
are contained in the class of $\phi$-torsion presheaves defined above.
Here $n_\varepsilon=\sum_{i=1}^n \langle (-1)^i \rangle $,
and 
the element $\langle \lambda \rangle\in [\mathbb S,\mathbb S]_{\SH(k)}$, for $\lambda\in k^\times$, 
is given by the morphism $\Gm^{\wedge 1}\to \Gm^{\wedge 1}$ induced by the map $x\mapsto \lambda x$. 
In particular, the motivic cohomologies with $\mathbb Z/n\mathbb Z$ coefficients are $n$-torsion;
the (derived) Witt-groups presheaves $\mathrm{W}^i(-)$, $i\in \mathbb Z/4\mathbb Z$, 
are torsion with respect to the element $h=2_\varepsilon=\langle 1\rangle +\langle -1\rangle$.

Consider the presheaf 
\begin{equation}\label{eq:Sm->SH-E>Ab}
\Sm_k\to \SH(S)\xrightarrow{E} \Ab^\mathrm{op}
\end{equation}
given by the additive functor $E$. 
Denote by the same symbol 
the corresponding continuous presheaf 
on essentially smooth schemes.

\begin{theorem}\label{th:introduction:Rigidity_E:SH->Ab_phi}
Let $Z\to X$ be a closed immersion of smooth affine schemes over a field $k$.
Let $\phi\in [\mathbb S,\mathbb S]_{\SH(k)}$ such that
$\rank (\phi)\in\mathbb Z$ defines an invertible element in $k$.

Then for any $\phi$-torsion additive functor $E$, see \eqref{eq:Sm->SH-E>Ab}, the inverse image homomorphism induces the isomorphism
\begin{equation}\label{eq:i*E(XhZ)simeqE(X)}i^*\colon E(X^h_Z)\to E(Z),\end{equation}
where $X^h_Z$ is the henselization of $X$ along $Z$, $i\colon Z\to X^h_Z$ is the canonical closed the immersion.
\end{theorem}


\begin{remark}\label{rem:EDM->AbSmAffHenseCor->Ab}
Theorems \ref{th:introduction:Rigidity_E:SH->Ab_phi}, and \ref{th:introduction:Rigidity:framed} 
are new in the case of 
$n$-torsion homotopy invariant presheaves with transfers $\mathrm{Cor}\to \Ab^\mathrm{op}$
or (torsion) presheaves 
on the category of Voevodsky's motives $\mathbf{DM}(k)$. 
\end{remark}

\begin{remark}\label{rem:LocCaseoTheoremsinArticle}
The local (henselian) case of isomorphism \eqref{eq:i*E(XhZ)simeqE(X)} 
for (torsion) presheaves with transfers over a field $k$ was proven in \cite{SV96}.
The local case of Theorems \ref{th:introduction:Rigidity_E:SH->Ab_phi} and \ref{th:introduction:Rigidity:framed} is proven in \cite{AD-Rigid}.
Namely, the local case is the case of $Z=z$ being a closed 
point in the scheme $X\in \Sm_k$ 
with the separable residue field $k(z)/k$. 

The main novelty in the article are geometrical 
constructions needed for non-local smooth affine case,
described in Subsubsections \ref{subsubsect:KeyConstructCurve} and \ref{subsubsection:A1hommotsuspspaces}. 
The rigidity isomorphism is proven via 
some kind of a 'curve'-homotopy that retracts the affine scheme 
$X^h_Z$ to 
$Z$ inside $X$.
The 'curve'-homotopy leads to 
the 
(stable) $\A^1$-homotopy on the suspension motivic spectra 
up to an $n_\varepsilon$-divisible morphism in $\SH(k)$.
\end{remark}




\begin{remark}

The symbol $X^h_Z$ sometimes is used with different meaning.
In the recent preprint \cite{JPAdddoublecos} by Joshua and Pelaez 
the notation $X^h_Z$ is used with the reference to \cite{Cox78} by Cox, where the notion of \emph{henselian schemes} is introduced.
In distinct to the henselizations $X^h_Z$ 
the \emph{henselian schemes} in \cite{Cox78} 
are not schemes (or pro-schemes), but are rigid spaces of another type.

The preprint \cite{JPAdddoublecos} contains an argument that deduces rigidity on objects denoted by $X^h_Z$ from 
the local rigidity property.
The argument is based on the local to global spectral sequence, 
and the used properties of $X^h_Z$ does not hold for the affine schemes 
$X^h_Z=\Spec A^h_I$ over a field $k$. 

\end{remark}

\begin{remark}
The case of $n_\varepsilon$-torsion presheaves
in Theorem \ref{th:introduction:Rigidity_E:SH->Ab_phi} is equivalent to the general case of 
$\phi$-torsion presheaves. 

The torsion condition with respect to an element $\phi\in [\mathbb S,\mathbb S]_{\SH(k)}$ 
for rigidity isomorphisms 
was suggested by A.~Ananiesky.
The first version of \cite{AD-Rigid} concerned $n$-torsion and $n_\varepsilon$-torsion presheaves.
The statement in the general form for $\phi$-torsion presheaves and the short argument for the equivalence with the $n_\varepsilon$-case ware found by T.~Bachamnn and A.~Ananievskiy. 
\end{remark}

\subsection{Elemntary form of the result.}\label{subsect:RigidFr} 

As was said before in the topological case 
isomorphsim \eqref{eq:topinftuberigid} holds 
for any homotopy invariant presheaf $F$,
such properties of a cohomology theory as the Mayer-Vietoris exact sequence are not needed.

There is a stronger and more elementary version of 
Theorem \ref{th:introduction:Rigidity_E:SH->Ab_phi}.
It does not formally use 
the category $\SH(k)$,
and the Brown-Gersten property. 
This can be done in terms of so called framed correspondences 
introduced by Voevodsky \cite{Voev-FramedCorrM}, and more wiedly studied 
in the Garkusha-Panin's theory of framed motives 
\cite{GP14}. 
At the same time this is only one known way at the moment 
to prove Theorem \ref{th:introduction:Rigidity_E:SH->Ab_phi}.


The (graded) category $\Fr_+(k)$ is 
a category with objects smooth schemes over $k$
and morphisms given by 
certain diagrams (of schemes), 
see Def. \ref{def:Fr}.
The definition implies canonical functors 
\begin{equation}\label{eq:Fr+->SH}\Sm_k\to \Fr_+(k)\to \SH(k).\end{equation} 
The category $\SH(k)$ 
can be constructed in the following steps
\begin{itemize}
\item[(1)] to consider the stable homotopy category of $S^1$-spectra presheaves on $\Sm_k$, 
\item[(2)] to make the localisation with respect to $\A^1$-homotopy (weak) equivalences, Nisnevich local (weak) equivalences, and $\mathbb G_m^{\wedge 1}$-stable weak equivalences.
\end{itemize}
In parallel, we can consider 
\begin{itemize}
\item[(1)] the additivisation $\ZF_*(k)$ of $\Fr_+(k)$,
\item[(2)] and the $\A^1$-homotopy quotient (graded) category $\ovZF_*(k)$.
\end{itemize}
Then the second functor in \eqref{eq:Fr+->SH} induces the (additive) functor
\begin{equation}\label{eq:ZF*toSH}\overline{\ZF}_*(k)\to \SH(k).\end{equation} 
%


As a paricular consequence of main results in 
\cite{GP14}
it follows that for a perfect base field $k$ 
for any $Y\in \Sm_k$ the morphism of presheaves
\begin{equation}\label{eq:ovZF(Y)pi-(Y)}\varinjlim_n \ovZF_n(-,Y)\to [-,Y]_{\SH(k)},\end{equation}
induced by \eqref{eq:ZF*toSH} 
is a (Nisnevich and Zariski) local isomorphism on $\Sm_k$.
So \eqref{eq:ZF*toSH} can be considered as 
a (robust) approximation of $\SH(k)$
by an additive category with objects smooth schemes and 
hom-groups defined with generators and relations described in (precise and technical) algebro-geometric terms.



We should note that the actual aim and power of the framed correspondences approach is 
the reconstruction of $\SH(k)$ over a perfect field 
in order to get a computational tool for the stable motivic homotopy groups, 
but we do not use it in the present article.
%
Moreover, the local isomorphism 
\eqref{eq:ovZF(Y)pi-(Y)} is not needed for our result too. 
We use 
the functor \eqref{eq:ZF*toSH} well-defined for any base.
In view of \eqref{eq:ZF*toSH} any additive presheaf $E\colon \SH(k)\to \Ab^\mathrm{op}$ defines a presheaf
\[F\colon \overline{\ZF}_*(k)\to \Ab^\mathrm{op}.\]
Moreover, any such a presheaf $F$ 
satisfies the property called $\sigma$-stability, 
and if $E$ is $n_\varepsilon$-torsion, for $n\in \mathbb Z$, 
then $F$ is $n_\varepsilon$-torsion in the corresponding sense.
The functors $F$ and $E$ induce the same functor on essentially smooth schemes.
Theorem \ref{th:introduction:Rigidity_E:SH->Ab_phi} is a consequence of the following result.
\begin{theorem}\label{th:introduction:Rigidity:framed}
Suppose $i\colon Z\hookrightarrow X^h_Z$ is an affine smooth henselian pair over a field $k$
as in Theorem \ref{th:introduction:Rigidity_E:SH->Ab_phi}.
Let $F\colon \mathbb Z\mathrm{F}_*(k)\to \Ab^\mathrm{op}$ 
be an $\A^1$-homotopy invariant 
$\sigma$-stable additive functor. 
Assume that $l_\epsilon F=0$ for some $l\in \mathbb Z$, $(l,\chark k)=1$.
Then the inverse image homomorphism defines the isomorphism 
\[i^*\colon F(X^h_Z)\simeq F(Z).\]
\end{theorem}

Framed correspondences
ware suggested by Voevodsky
as a tool for $\SH(k)$
that would work in parallel to
Voevodsky's correspondences $\mathrm{Cor}(k)$ in the case of $\mathbf{DM}(k)$.
Gabber's rigidity on (essentially) smooth local henselian schemes 
for $\A^1$-homotopy invariant torsion presheaves with $\Cor(k)$-transfers \cite[Theorem~4.4]{SV96}
implies the same for the torsion additive functors $E\colon \mathbf{DM}(k)\to \Ab^\mathrm{op}$.
In \cite{AD-Rigid} the using of framed correspondences allowed to generalise \cite[Theorem~4.4]{SV96} 
with the application to $\SH(k)$ and prove Theorems \ref{th:introduction:Rigidity_E:SH->Ab_phi}, and \ref{th:introduction:Rigidity:framed} for (essentially) smooth local henselian schemes.

Our argument and result would be still new in the $\Cor$-correspondences case, see Subsection \ref{subsect:Intr:AffinepairRelCurve} for details. 
%
At the same time framed correspondences appear naturally 
in the needed algebro-geometric constructions.
The constructions use some kind of framed correspondences of positive dimension,
while $\Cor(X,Y)$ and $\Fr_n(X,Y)$ are correspondences with finite support,
see Subsection \ref{subsect:FrCorddiminner} for details. 

\subsection{Gabber's rigidity in literature} 
\label{subsect:Gabbersrigidityreview}
In few words we can say that
the rigidity isomorphisms are of interest
because they connect 
values of a given cohomology theory on different schemes. 
Sometimes this plays a role like $\A^1$-homotopy invariance. 
The rigidity is formally an independent property, but
it is often more special and restrictive condition on the cohomology theory.

In this subsection we review known (Gabber's) rigidity isomorphism.
In some works the rigidity plays the role of an instrument oriented to the main goal, 
sometimes it is considered as an application,
in the most of listed works the rigidity is the main result.
We group the works
with respect to the goal and the motivation,
and 
with respect to the methods provided the rigidity.

Some articles and preprints mentioned in this and the next subsections, namely \cite{HRO1Oct,BachmannRigidSHet,RigidityEquivariantAlgKtheoryNimanRavi}, 
appeared on arXiv later the present one;
reworking the text we decided to include them for the completeness and freshness of our review.

\subsubsection{Rigidity for $\mathrm{K}$-theory}
The rigidity theorems started with 
works by Suslin on algebraic $K$-theory \cite{Sus83,Sus84}.
Namely, the works contain the rigidity isomorphisms
for preshaves $\Kth_i(-, \mathbb Z/n\mathbb Z)$, $i\in \mathbb Z_{\geq 0}$, 
along
extensions of algebraically closed fields,
and
the isomorphism of the form \eqref{eq:affineAYXBrigidityisomorphism} 
for a closed point $Z=z$ of a Dedekind scheme $X$.
In both results $n$ is an integer invertible on corresponding schemes.
The result \cite[Theorem~A]{GT84} by Gillet and Thomason states 
the rigidity for the same presheaves
for the strict henselization $X^\mathrm{sh}_z$ at a point $z$
of a smooth scheme $X$ over a separably closed field. 
In Gabber's work \cite[Theorem~1]{Gab92} the isomorphism
\begin{equation}\label{eq:RigidKth}
\Kth_i(A,\mathbb Z/l\mathbb Z)\simeq \mathrm \Kth_i(A/I,\mathbb Z/l\mathbb Z), \forall i\geq 0.
\end{equation}
was proven for any henselian pair $(A,I)$ of $\mathbb Z[1/l]$-algebras. 
There is the review by T.~Geisser \cite{Greview00} on the mentioned above results.

The approach 
was developed in \cite{RosRigidKthquatization} by Rosenberg 
with respect to question on K-theory with finite coefficients of the quantization deformation of algebras 
(instead of henselian pairs).

The rigidity isomorphisms ware involved originally by Suslin \cite{Sus83,Sus84}
in order to compute the $\mathrm{K}$-theory 
of algebraically closed fields, and local fields, 
and to prove the Quillen-Lichtenbaum conjecture.
The rigidity isomorphisms help to transfers computational results between different fields and rings.

\subsubsection{Cyclotomic trace map}

Gabber's rigidity isomorphisms appear also 
is the studying of 
the cyclotomic trace map for the algebraic $\mathrm{K}$-theory. 
The goal of cyclotomic trace methods is the computation of $\mathrm{K}$-theory 
for some nilpotent rings like $\mathbb Z/p^l\mathbb Z$ or $p$-adically complete rings. 
The rigidity results are involved because of the same reason as in the previous paragraph. 

The most recent works in this direction we know are 
\cite{CMM18} by Clausen, Mathew, Morrow, and 
\cite{RigidityEquivariantAlgKtheoryNimanRavi} by N.~Naumann, C.~Ravi.
The central result of \cite{CMM18} is 
the isomorphism \eqref{eq:RigidKth} 
for an arbitrary $l\in \mathbb Z$ with 
the $K$-theory spectrum $K(R)$ replaced by 
the homotopy fiber of the cyclotomic trace $K(R)\to TC(R)$.
In \cite{RigidityEquivariantAlgKtheoryNimanRavi} 
the method is developed in the equivariant case,
as a consequence 
the Gebber's rigidity in the original form \eqref{eq:RigidKth}
is proven for equivarint $\mathrm{K}$-theory with respect to a (finite)
group that order is invertible in $R$.

\subsubsection{\'Etale cohomologies with torsion coefficients.}
Also one case of rigidity isomorphism for affine henselian pairs \eqref{eq:affineAYXBrigidityisomorphism} is the case 
of \'etale cohomologies with finite coefficients, 
and more generally with torsion coefficients 
on the small \'etale site on $\Et_X$. 
This is proven in \cite{Gab94} by Gabber and in \cite{HEtfin} by Hubber.
In \cite{PoppiPBchAffHensPair} there is a reference for this result to much earlier work \cite{Elkiksoleqhens} by Elkik.
Some examples of rigidity for \'etale cohomologies for non-affine henslian pairs are mentioned in Subsection \ref{subsection:ninaffinecase}.

\subsubsection{Suslin-Voevodsky's method for the rigidity}
In the work \cite{SV96} by Suslin and Voevodsky
there was presented an approach
provided the rigidity isomorphisms 
for a wide class of presheaves over a field $k$, see \cite[Theorem~4.4]{SV96}.
This is the class of homotopy invariant (torsion) presheaves with transfers, 
i.e. additive presheaves in the 
the category of correspondences $\mathrm{Cor}_k$.

The ideas and the method 
ware developed and generalised 
to cover many other classes of presheaves.
Let us list works that can be related to this direction
\cite{PY02} by Panin and Yagunov,
\cite{Ya04} by Yagunov,
\cite{HY07} by Hornbostel and Yagunov,
\cite{RO08} by R\"ondigs and \O stvaer,
\cite{N15} by Neshitov,
\cite{Ananyevskiy-zthstpiA1(smcurve)} by Ananyevskiy,
\cite{D-WittRigid} by the author,
\cite{AD-Rigid} by Ananyevskiy and the author.
In some of the references the method was developed 
not for the Gabber's rigidity for henselian pairs,
but also for the case of field extensions
(proven originally for $\mathrm{K}$-theory in \cite{Sus83}).
The work \cite{HRO1Oct} by Heller, Ravi, and \O stvaer, 
that proves the rigidity isomorphism
for some equivariant cohomology theories,
could be partly related to the direction of the discussed method as well.

The main subject of 
\cite{SV96} is the isomorphism of 
motivic homologies with finite coefficients of a smooth scheme $X$ over $\mathbb C$
and
the singular homologies with the same coefficitents
of the complex realisation $X(\mathbb C)$.
The rigidity theorem allowed 
to prove that 
the corresponding cohomology presheaves
are locally constant on $\Sm_{\mathbb C}$ with respect to \'etale topology.

The work \cite{SV96} had gave some part of the basis for 
the future development of Voevodsky's motives theory. 
It is contained 
implicitly in \cite{SV96} the proof of the equivalence
$\DM_{\text{\'et},\mathrm{eff}}(\mathbb C,\bbZ/l\bbZ)\simeq \mathbf{D}(\bbZ/l\bbZ\,\text{-mod})$.
Results of such type for \'etale motivic categories
\cite{VSF00,Ayo13,CD16,BachmannRigidSHet}
are called rigidity theorems too, see Subsection \ref{subsect:SHCatPropertyRigidity}. 
These results in its turn imply Gabber's rigidity isomorphisms for 
essentially smooth local henselian schemes.

\subsubsection{Friedlander-Milnor's conjecture}

\cite{FriedlanderFrielanderMilnorConj,MilnorFrielanderMilnorConj} for a reductive algebraic group $G$ over an algebraically closed field $k$ follows from Gabber's rigidity \eqref{eq:affineAYXBrigidityisomorphism} on one-dimensional schemes for presheaves $H^i(G(-),\mathbb Z/l\mathbb Z)$, $l\in k^\times$, see Knudson's work \cite{K98}, or Jardins's article \cite{JardineFriedMilnConjRigidityHGl}.

Suslin's rigidity theorems \cite{Sus83,Sus84} are based on the studying of cohomologies of the stable linear group $\mathrm{GL}_\infty$. The obtained results imply the Friedlander Milnor's conjecture for $\mathrm{GL}_\infty$.

In \cite{K98} the mentioned rigidity for $G=\mathrm{GL}_n$, $i\geq 3$, over an infinite base field $k$ on a henselian local $k$-algebra $A$ with the residue field $k$, that implies Friedlander-Milnor's conjecture for such $G$.

In \cite{Mor11} Morel proved 
a weak form of Friedlander-Milnor's conjecture
for any reductive algebraic group $G$ over a separably closed field $k$.
This implies the original conjecture,
in particular, if
the presheaf $H^*(G(-),\mathbb Z/l\mathbb Z)$, $l\in k^\times$, 
is $\A^1$-homotopy invariant.
%
The key ingredient 
in \cite{Mor11} is Gabber's rigidity for 
cohomologies with finite coefficients of the classifying space of the presheaf $\Sing_*^{\A^1}(G)$, 
where $\Sing_*^{\A^1}$ is the Suslin-Voevodsky's construction of the $\A^1$-localisation.
For this purpose, and basing on Suslin-Voevodsky's method \cite[Theorem~4.4]{SV96},
Morel proved rigidity theorem
for a strictly $\A^1$-invariant sheaf with generalised transfers 
\cite[Theorem~5.14]{Mor11}.

\subsubsection{$\rho$-periodic cohomology theories}
The computation of 
the $\rho$-inverted stable motivic homotopy category 
in terms of spectra on the real \'etale site
by Bachmann \cite{Ba16} 
has a rigidity result as a consequence \cite[Corollary~44]{Ba16}.
Namely, it is the Gabber's rigidity 
on essentially smooth local henselian schemes 
for  $\rho$-periodic cohomology theories.
The element $\rho$ is defined by 
$\rho=-[-1]\in \mathrm{K}^{\mathrm{MW}}_1(k)\cong[\mathbb S,\mathbb G_m^{\wedge 1}]_{\SH(k)}$.

\subsubsection{Non-local affine case}
The works by Gabber \cite{Gab92, Gab94} that gave the name to isomorphism \eqref{eq:affineAYXBrigidityisomorphism} concerned affine henselian pairs. At the same time, most of Gabber's rigidity isomorphisms proven in literature cover the local henselian case.
The non-local affine case 
is more complicated even for particular examples of presheaves. 
In the above list the non-local rigidity results are 
\cite{Gab92,Gab94, CMM18, RigidityEquivariantAlgKtheoryNimanRavi}.

In particular,  the generality of discussed above Suslin-Voevodsky's method used originally in \cite[Theorem 4.4]{SV96}
is the case of the local henselian scheme $X^h_x$ with $X\in \Sm_k$, and $x\in X$ being a point with a separable residue field over a field $k$.
Our work evolves this approach to the rigidity isomorphisms, though the basic constructions differ from the beginning. 
The common features are using of transfers and studying of relative curves and their compactifications, see Subsection \ref{subsect:Intr:AffinepairRelCurve} \ref{subsect:FrCorddiminner} for the essential differences.

\subsubsection{Non-affine case}\label{subsection:ninaffinecase}

In \cite{Gab94} Gabber called the isomorphism \eqref{eq:affineAYXBrigidityisomorphism} as an affine analogue of the proper base change.
In its turn, the proper base change for \'etale cohomologies with finite coefficients over henselian pairs gives an example of
rigidity isomorphism \eqref{eq:infNisnNeighRigidityisomorphism}
for certain non-affine schemes. 
See \cite[ch~4, Corollary~2.7]{Milne-EtCoh} for the (classical) case over local henselian pair, and the case over an affine henselian pair follows with using of \cite{Gab94}, or by the direct argument obtained in \cite{PoppiPBchAffHensPair} by Poppi.

The result of \cite{BiKr18} by F.~Binda, A.~Krishna
gives an example closer to the context motivic cohomologies that looks like the rigidity of such type.
Namely, the result relates classical Chow group or relative 0-cycles and Levine-Wiebel Chow groups with finite coefficients on the closed fibre of a projective flat regular scheme over a spectrum of (excellent) henselian discrete valuation ring. 

\subsection{Categorical property.}\label{subsect:SHCatPropertyRigidity} 
The author would appreciate discussions on the question: Could we formulate such a property of the category $\DM(k,\bbZ/l\bbZ)$ with finite coefficients and the $l$-completed stable motivic homotopy category $\SH(k)^\wedge_l$, $(l,\chark k)=1$, 
that would imply Theorem \ref{th:introduction:Rigidity_E:SH->Ab_phi}.

The question can be motivated in view of rigidity theorems for the \'etale versions of motivic categories
$\DM^-_\text{\'et}(k, \mathbb Z/l\mathbb Z)$, $\mathbf{D}^-_{\A^1,\text{\'et}}(k, \mathbb Z/l\mathbb Z)$,
and the $l$-completed category $\SH_\text{\'et}(B)^\wedge_l$ over a base scheme $B$
proven for different cases
by Voevodsky in \cite{VSF00}, 
Ayoub \cite{Ayo13}, 
Deglise-Cesinski \cite{CD16},
and the most recent and the strongest results of such type is \cite{BachmannRigidSHet} by Bachmann.

The mentioned categorical equivalences imply immediately
rigidity isomorphism \eqref{eq:affineAYXBrigidityisomorphism}
for $l$-torsion additive presheaves $\SH_\text{\'et}(B)\to \Ab^\op$
on essentially smooth strictly henselian local schemes.
The case of smooth affine henselian pairs follows with using the rigidity 
for \'etale cohomologies \cite{Gab94}.



There is a categorical property of another type relating to the rigidity isomorphisms proven in \cite{RO08} by R\"ondigs and \O stvaer. It is not discussed in the article, but it looks that by analogy with \cite{RO08} there is a fully faithful embedding $\SH(Z)^\wedge_l\to\SH(X^h_Z)^\wedge_l$ of categories for a smooth affine henselian pair $Z\to X^h_Z$ over a field $k$.

\subsection{The mixed characteristic case.}
The affine henselian pair $(S,x)$ with $S$ being local henselian scheme 
of a mixed characteristic and $x\in S$ the closed point is not a smooth affine henselian pair over $S$, 
since $x$ is not a smooth $S$-scheme.
Secondly, the discussed in the next subsection key ingredient 
does not work over
non-zero dimensional base $S$. 
So in the present work there is no 'cross characteristic' effect such as in \cite{Gab92,Sus83,Sus84}.

\subsection*{Acknowledgment} 
The author is grateful to A.~Ananyevskiy for many helpful discussions on the problem and related questions, for his help with finding the mistakes in the early ideas of the proof.
Also the author thanks 
I.~Panin and F.~Binda for the consultations on the question on the generality of the Picard rigidity statements and applications of the proper base change theorem.
The author is grateful to T.~Bachmann for the form of the result in terms of the $\GW(k)$ coefficients.


\section{Curves (framed curves), and $\A^1$-homotopies.}

In this section we  explain some geometrical basis and motivic categorical nature of the rigidity isomorphism proven in Theorems \ref{th:introduction:Rigidity_E:SH->Ab_phi}, and \ref{th:introduction:Rigidity:framed}.
The section is included to help to navigate in our constructions since the reading experience shown that it could be more complicated without presented comments.
At the same time, we explain the main novelty of the present work.

\subsection{The problem and novelty.}\label{subsect:Intr:AffinepairRelCurve}

\subsubsection{Relative curves and 'curve'-homotopies.}

Given a pair of morphisms of schemes $f_0, f_1\colon U\to X$. 
We call by a \emph{'curve'-homotopy} or a \emph{'curve'-correspondence} connecting $f_0$ and $f_1$ a diagram
\begin{equation}\label{eq:Ue1e2CgX}\xymatrix{
U\ar@<1ex>[r]^{e_0}\ar@<-1ex>[r]_{e_1} & \relcurve\ar[l]|{\mathrm{pr}}\ar[r]^g& X
}\end{equation}
such that $e_0$ and $e_1$ are right inverse to the morphism $\mathrm{pr}$ of relative dimension one, and $g\circ e_1=f_1$, $g\circ e_0=f_0$.
If $\relcurve=\A^1_U$, then \eqref{eq:Ue1e2CgX} is an $\A^1$-homotopy.
In the context of smooth (compact) manifolds, the notion of smooth 'curve'-homotopies would be equivalent to the usual topological homotopies. In algebraic geometry, there are many non-isomorphic (and even smooth) relative curves. 
Nevertheless to construct an $\A^1$-homotopy we can start with a 'curve'-homotopy, and then construct a (relative) $\A^1$-homotopy inside the relative curve $\relcurve$ over $U$.

Using relative curves is a common feature in many reasonings on algebraic K-theory, algebraic groups, and motivic homotopy theory.
Some examples are Voevosky's proof of the Gersten conjecture and other statements on $\A^1$-invariant presheaves with transfers in \cite{VSF00},
and the (stable) connectivity theorem by Morel \cite{Mor1,Mor12}.
The proof of Grothendieck–Serre's conjecture in \cite{GrothSerre-PSV} by Panin, Stavrova, Vavilov, and \cite{Pan-GSConj} by Panin is also one example, where constructing and using relative curves played an important role.

Let us note that constructions or relative curves developed in some of the above references appeared in earlier works on K-theory and cycle groups by other authors.

\subsubsection{Quillen's trick.}


A basic way to turn an affine scheme $X$ of dimension $d$ over a field $k$ into a relative curve is to consider a generic (surjective) map $p\colon X\to \A^{d-1}_k$ of relative dimension one.
If $f\colon U\to X$ is a morphism, then by the base change we can get the diagram 
\[
\xymatrix{
U\ar@<1ex>[r]^{e} & \relcurve\ar[l]|{\mathrm{pr}}\ar[r]^g& X
}
, \; \mathcal C= U\times_{\A^{d-1}_k} X, g\circ e=f,\]
that defines 'the direction' for moving of $U$ inside $X$.

Quillen's trick allows choosing the map $p$ such that $X$ is smooth over $\A^{d-1}_k$ at a given finite set of points, 
and such that a given closed subscheme (of positive codimension) $Z$ in $X$ is finite over $\A^{d-1}_k$.
Moreover, it is possible to compactify $X$ over $\A^{d-1}_k$ in a certain way. 
Quillen's trick is an often-used technique that has many versions, the original reference is \cite[Lemma 5.12]{Qillenstrick}. 
%
%

A variant of Quillen's trick was used in \cite{VSF00} as mentioned above.
In \cite{Mor1,Mor12} it was used a more advanced construction called Gabber's presentation lemma; 
it provides a map $p\colon X\to \A^{d-1}_k$ 
equipped with another additional data and properties, 
see \cite[Lemma 3.1]{Gab94GerstConjsomecomplevancyc}, and in more precise form \cite[Theorem 3.2.1]{CTHK}.


\subsubsection{Key construction.}\label{subsubsect:KeyConstructCurve}
The mentioned above techniques and examples relates 
to the study of local and semi-local (essentially smooth) schemes.
The main difficulty solved in the present article is the construction of the appropriate 'curve'-homotopy for a smooth affine henselian pair.

Let $U=X^h_Z$ for $X\in \SmAff_k$. 
The key construction in Section \ref{sect:ContractingDeduct} gives
a diagram \eqref{eq:Ue1e2CgX}
such that $g\circ e_1\colon U\to X$ equals the canonical morphism $\can\colon U=X^h_X\to X$, 
and $g\circ e_2$ passes through $Z$,
and such that
\begin{itemize}
\item[(0)] 
$\relcurve\to U$ is of dimension one, and smooth over $e_i(U)$, $i=1,2$,
\item[(1)] there is a closed immersion of $U$-schemes $\relcurve\hookrightarrow \ovrelcurve$, 
such that $\ovrelcurve$ is flat projective of dimension one,
and the closed complement $\mathcal C_\infty$ of $\relcurve$ is finite over $U$, 
\item[(2)] 
there is an ample line bundle $\calO(1)$ on $\ovrelcurve$ 
that is trivial on $\mathcal C_\infty$ and on $\relcurve$,
\end{itemize}
We could say that the curve $\relcurve$ retracts $X^h_Z$ to $Z$ inside $X$,
and this 'retraction' leads to rigidity isomorpihsm \eqref{eq:affineAYXBrigidityisomorphism}.

The direct application of Quillen's trick to $X$ (or $U$) over $k$ could brake the smoothness condition (0). 
The application of Quillen's trick to $U$ over $Z$ could destroy the properties of the compactification (1) and (2).
The presented technique does not improve or generalize Quillen's trick but plays a similar role.

The constriction does not use a map $p\colon X\to \A^{d-1}_k$. 
The schemes $\mathcal  C$ and $\overline {\mathcal C}$ are obtained via an appropriate choice of $(N-1)$ equations in the relative affine and projective spaces $\A^N\times U$, and $\PP^N\times U$.
So our technique could be used in much earlier or even classical works, 
but it appears naturally in the motivic homotopy theory with the language of framed correspondences.
The mentioned equations relate to what we call by the framing of a scheme, see Subsection \ref{subsect:FrCorddiminner} for details.

\subsubsection{The local case}
Before we proceed with other details on the proof of rigidity Theorems \ref{th:introduction:Rigidity_E:SH->Ab_phi} and \ref{th:introduction:Rigidity:framed}
let us briefly recall 
the plan of the argument %
in the local case
used in many works listed in Section \ref{subsect:Gabbersrigidityreview} including 
\cite{SV96} and \cite{AD-Rigid}.
\par 
Any essentially smooth local henselian scheme 
with separable residue field $\kappa =k(x)$ 
equals the henselization $(\A^d_{\kappa})^h_{\{0\}}$ 
at the zero point $\{0\}\in \A^d_{\kappa}$.
So there is a sequence of one-dimensional morphisms
\begin{equation}\label{eq:Ai,i-1normalfsequence}(\A^d_{\kappa})^h_{\{0\}}\to (\A^{d-1}_{\kappa})^h_{\{0\}}\to\dots (\A^1_{\kappa})^h_{\{0\}}\to x.\end{equation}
Isomorphism \eqref{eq:affineAYXBrigidityisomorphism} is proven 
by the sequence of homotopies that consequently contract
each $(\A^{i}_{\kappa})^h_{\{0\}}$ to $(\A^{i-1}_{\kappa})^h_{\{0\}}$.

%

Saying accurately 
for a (small enough) \'etale neighbourhood $X$ of $z=\{0\}$ in $\A^i_{\kappa}$, 
and $U=(\A^{i}_{\kappa})^h_{\{0\}}=X^h_z$,
it is constructed a diagram \eqref{eq:Ue1e2CgX}
such that $g\circ e_0=\can\colon U\to X$ is the canonical morphism, and $g\circ e_1$ is a lift of the composite morphism $\A^i\to \A^{i-1}\times \{0\}\hookrightarrow \A^i$.
Then the equality \[[e_0]= [e_1]\in [U, \mathcal C]/l_\epsilon\] for the classes of morphisms in ${\SH(U)}$ is proven. 
The proof of the last equality differs for different categories and types of transfers,
but the critically important properties of $\mathcal C$ 
are 
\begin{itemize}
\item[(0)]
the relative curve $\mathcal C$ is smooth at the point $e_1(x)=e_2(x)$, 
\item[(1)] 
the existence of 
a closed immersion $\mathcal C\hookrightarrow \overline{\mathcal C}$, 
such that 
$\mathcal C_\infty=\overline{\mathcal C}-\mathcal C$ finite over $U$.
\end{itemize}
In addition, it is important that
\begin{itemize}
\item[(2)] 
$\calO(1)$ is trivial on $\mathcal C_\infty$, and $e_i(U)$, $i=0,1$, where
$\calO(1)$ is an ample line bundle on $\ovrelcurve$,
\end{itemize}
though (2) 
holds automatically
because $\mathcal C_\infty$ and $e_i(U)$ are semi-local affine schemes.

%
In \cite{SV96} and \cite{AD-Rigid} the relative curves $\relcurve$ and $\ovrelcurve$ are constructed by variants of 
Quillen's trick. 
Properties (0) and (1)
define the generic position condition on the parameters.
%
%

The first formal obstruction 
for copying of 
the above argument 
in the smooth affine henselian pair case
is that the normal bundle $N_{Z/X}$ is not trivial in general; 
so (formally) there is not the filtration \eqref{eq:Ai,i-1normalfsequence}.
This is fixed in Subsection \ref{subsect:codimonetrivnortangbunddlesreduct}
by replacing of $X$ and $Z$ by such schemes that vector bundles $T_X$, $T_Z$, and $N_{Z/X}$ are trivial. 
The most essential question as mentioned above
is the construction of 
the relative curves over the affine scheme $U=X^h_Z$.

\subsection{Constructing of $\A^1$-homotopies and framed correspondences.}\label{subsect:FrCorddiminner}

Theorem \ref{th:introduction:Rigidity_E:SH->Ab_phi} 
says roughly speaking that 
the morphism 
\[Z\wedge (\mathrm{H}\mathbb Z/\phi)\to X^h_Z\wedge (\mathrm{H}\mathbb Z/\phi) \]
given by the canonical immersion
is an isomorphism in $\SH(k)$, 
where $\mathrm{H}\mathbb Z\in \SH(k)$ denotes the image of the Eilenberg-MacLane spectrum
along the canonical functor $\SH\to \SH(k)$.

For any smooth affine henselian pair 
there is a morphism $r\colon X^h_Z\to Z$ left inverse to the immersion $i\colon Z\to X^h_Z$.
To prove Theorem \ref{th:introduction:Rigidity_E:SH->Ab_phi} means to prove that
the composition 
\begin{equation}\label{eq:retractcomposition}
X^h_Z\wedge (\mathrm{H}\mathbb Z/\phi)\to Z\wedge (\mathrm{H}\mathbb Z/\phi)\to X^h_Z\wedge (\mathrm{H}\mathbb Z/\phi) \end{equation}
equals the identity.
The scheme $X^h_Z$ is not an object in $\Sm_k$ and $\SH(k)$,
so strictly speaking for any \'etale neighbourhood $\widetilde X$ of $Z$ in $\widetilde X$ put at the right side of \eqref{eq:retractcomposition} we find an \'etale neighbourhood $U$ 
put at the left side such that the composite morphism equals the canonical one.

The required equality 
in $\SH(k)$ is proven via 
(a sequence of) 
$\A^1$-homotopies. 
The instrument that allows us 
to make precise constructions of morphisms (and homotopies) in $\SH(k)$
is the framed correspondences technique.
%
%

\subsubsection{Basic principle: equations and elements in homotopy groups.} 


To explain the basic principle we are going to construct a morphism of motivic spaces $\PP^N_{/\infty}\to T^{\wedge N}$ over a scheme $S$,
where
 $\PP^N_{/\infty} = \PP^N_S/\PP_S^{N-1}$ and $T=\A^1_S/(\A^1_S-0)$
 are two models of the motivc sphere over $S$.
%



Given a set of sections $u=(u_1, \dots u_N)$ on $\PP^N_S$ and a regular map $\overline g$
\[u_i\in\Gamma(\PP^N_S, \calO(d_i)),i=1,\dots N, \quad %
\overline g\colon Z(u)\to \{0,1\},\]
and suppose that the vanishing locus $Z(u)$ does not intersect $\PP^{N-1}_S=Z(t_\infty)$.
Then the regular map 
\[(u_1/t_\infty^{d_1}, \dots,u_N/t_\infty^{d_N}, \overline g)\colon \A^N_S\to \{0,1\}\times\A^N_S\]
induces the morphism of motivic spaces in $\mathbf H_\bullet(S)$
\[\PP^N_{/\infty}\to \A^N_S/(\A^N_S-Z(u))\to (\{0,1\}/\{1\})\wedge T^{\wedge N}\simeq T^{\wedge N}.\]


\subsubsection{Language of framed correspondences}\label{subsubsect:FrCorLanguage}
Let 
$P_\infty\hookrightarrow \overline P$, 
$V_\infty\hookrightarrow \overline V$
closed immersions such that 
(1)
$\overline P$ is a projective $S$-scheme, 
$P_\infty = Z(t_\infty)$ for a section $t_\infty$ of an ample sheaf $\calO(1)$ on $\overline P$,
(2)
$\overline V$ is a quasi-projective $S$-scheme, and
$V=\overline V-V_\infty$ is smooth over $S$.
Some data that generalise $(u,\overline g)$ above and satisfy certain conditions 
can be used to define morphism in $\mathbf H_\bullet(S)$
\begin{equation}\label{eq:pePeqVT}\overline P_{/\infty}\to \overline V_{/\infty}\wedge T^c, c\geq 0,\end{equation} 
where
$P_{/\infty}=\overline P/P_\infty$, $V_{/\infty}=\overline V/V_\infty$. 

\emph{Framed correspondences} (defined in \cite{Voev-FramedCorrM} and \cite{GP14})
are sets of data 
that relate to the case of \eqref{eq:pePeqVT}
for 
$P_{/\infty}=\PP^N_{/\infty}$, 
$c=N$, 
and $V_\infty=\emptyset$.

\emph{Framed schemes} or schemes with \emph{framing}, see Definition \ref{def:Zar_normal_framing}, 
relate to the case of \eqref{eq:pePeqVT}
for $P_{/\infty}=\PP^N_{/\infty}$, 
$\overline{V}=Z(u)$, $Y=Z(u)\cap (\overline P-P_\infty)$, $\overline{g}=\id_{\overline Y}$.
In this case $V$ is a smooth closed subscheme in $\A^N_X$ defined by $c$ algebraic equations.


\emph{$P$-inner framed correspondence from $X$ to $V$}, see Definition \ref{def:innerFrnr},
is the set of data relating to the case of \eqref{eq:pePeqVT} for $c=N$, $V_\infty=\emptyset$, and where $P=\overline P-P_\infty$.



\subsubsection{$\A^1$-homotopy.}\label{subsubsection:A1hommotsuspspaces}
In view of the functor \eqref{eq:Fr+->SH} to get the claimed equality for morphism \eqref{eq:retractcomposition} for $\phi=l_\varepsilon$
we connect the classes of morphisms
\begin{equation}\label{eq:icircrid}
U\xrightarrow{r} Z\xrightarrow{i^\prime}  \widetilde X, \quad U\xrightarrow{\mathrm{can}} \widetilde X,
\end{equation}
by a framed $\A^1$-homotopy in $\ZF_*/l_\varepsilon(k)$, 
i.e. a framed correspondence $\A^1\times U\to \widetilde X$.
Without loss of generality we can assume that the \'etale neighbourhood $\widetilde X$ of $Z$ in $X$ is affine of dimension $d$ over $k$.

We start with \emph{a framing} and a compactification $\overline X$ for the affine scheme $\widetilde X$ over $k$. 
Next in Section \ref{sect:ContractingDeduct} we connect morphisms \eqref{eq:icircrid} by a \emph{framed} curve $\relcurve$ over $U$ equipped with a 'fine' compactification $\ovrelcurve$. 
In Section \ref{sect:SectRelCurve} we construct a framed $\A^1$-homotopy inside $\relcurve$.

%
%
This leads to the following sequence of morphisms in the category $\mathbf{H}_\bullet(k)$ (or morphisms of motivic spaces)
that connects the suspensions of morphisms \eqref{eq:icircrid}
\begin{equation}\label{eq:FramingdFrinnerFr}\begin{array}{clll}
\A^1_+\wedge \underbrace{U_+\wedge \PP^{N}_{/\infty}}_{||} \xrightarrow{}& 
\A^1_+\wedge \underbrace{U_+\wedge \widetilde X_{/\infty}}_{||}\wedge T^{N-d} \xrightarrow{} 
&\underbrace{\A^1_+\wedge \mathcal C_{/\infty}}_{||}\wedge T^{N-1}&\xrightarrow{} \widetilde X_+\wedge T^{N}\\
(U\times\PP^{N}/U\times\PP^{N-1}) & (U\times\wtovX/U\times \widetilde X_\infty) &(\A^1\times \overline\relcurve/ \A^1\times \relcurve_\infty)&
\end{array}\end{equation}
Here $\relcurve = \ovrelcurve - \relcurve_\infty$, $\widetilde X= \overline X-\widetilde X_\infty$.

\subsubsection{Set of equations.}
The main essential reason 
of the splitting for the construction in the steps corresponding to morphisms \eqref{eq:FramingdFrinnerFr}
is that it helps to control the process of the special and the generic choices of parameters,
and framed correspondences help to organize the data.
The scheme $\ovrelcurve$ is defined precisely by $(N-1)$ equations in $\PP^N_U$. 
Good properties of the compactification $\ovrelcurve$ is the critical point that allows separating the difficulties in the mentioned steps.

\subsubsection{Trivial tangent bundle}
Any smooth framed scheme $X$ has stably trivial tangent bundle.
That is way to make our construction, we firstly replace $Z$ and the \'etale neighbourhood  $\widetilde X$ in \eqref{eq:retractcomposition} by a motivically equivalent schemes with trivial tangent bundles.

\subsubsection{Smoothness.}
Indistinct to what is assumed 
in \eqref{eq:FramingdFrinnerFr}
the curve $\relcurve$ is actually non-smooth over $U$ in the proof.
So sequence 
\eqref{eq:FramingdFrinnerFr} would be formally contained 
in the motivic homotopy category $\mathbf H_\bullet(\Sch_k)$ (instead of $\mathbf H_\bullet(k)$).
The language of framed correspondences allows us 
to work with 
non-smooth schemes $\relcurve$ over $U$.
At the same time, we need that the morphism $\relcurve\to U$ is smooth over 
closed subschemes in $\relcurve$ that are graphs of morphisms \eqref{eq:icircrid}, 
and it is important that $X$ and $Z$
are smooth.

\subsection*{Notation and conventions}
For a base scheme $S$, denote by $\Sch_S$ the category of schemes over $S$ and by $\Sm_S$ the subcategory of smooth ones. 
For $X\in \Sch_S$ we say that $X$ is of a pure dimension $d$ 
over $S$ and write $\dim X=d$ iff for any point $x\in S$ each irreducible component of $X\times_S x$ has the Krull dimension $d$. 
In the case of $S=\Spec k$ 
or $X\in \Sm_S$ we call the schemes of a pure dimension also by schemes of a constant dimension.

For $X\in\Sm_S$ denote by $T_{X/S}$ and $ T^\vee_{X/S}$ the relative tangent and cotangent bundles of $X$ over $S$. 
Denote by $N_{X/Y}$ and $ N^\vee_{X/Y}$ the normal and conormal bundles for a closed immersion $X\hookrightarrow Y$, $Y\in \Sm_S$. Denote by $\Omega_{X}$ and $\Omega_{X/Y}$ the coherent sheaves of sections of $T_X^\vee$ and $N^\vee_{X/Y}$.

For a scheme $X$ we denote by $X_{red}$ the (maximal) reduced closed subscheme.
For a regular function $f\in \calO_X(X)$ on a scheme $X$ 
we denote by $Z(f)$ the vanishing locus of $f$ and denote by $Z(f)_{red}$ the reduced subscheme of $Z(f)$. 
The same notation we use for the common vanishing locus of a set of functions, and for vanishing loci of sections of vector bundles.
For a vector bundle 
$\mathcal L$ on a scheme $X$ we denote $\mathcal L^\times = \mathcal L-0_X$, where $0_X\subset\mathcal L$ is the zero section. 
We denote by $\mathbf 1_X$ the trivial line bundle on $X$.

For an open immersion $j\colon V\hookrightarrow X$ 
and a regular function $f$ on $X$ 
sometimes we denote the inverse image $j^*(f)$ by the same symbol $f$, 
and similarly for sections of a vector bundle.
Moreover, sometimes we do the same also for \'etale morphisms $j\colon V\to X$ if it is clear from the context.
For a morphism of schemes $f\colon X\to Y$ we write $f^*$ for the base change or the inverse image of any objects (or morphisms) over $Y$ and $X$. 

For closed subschemes $Z_1,Z_2\subset X$ we denote 
$Z_1-Z_2\stackrel{def}{=}Z_1-(Z_1\cap Z_2)$. 
%

\section{Henselian pairs.}\label{subsect:HensPairs}

In the section,
we summarise facts about (affine) henselian pairs. 
In the next section we need Definition \ref{def:HensPair}, Definition \ref{def:SmHensPair}, 
the equivalence of points (2) and (0) in Lemma \ref{lm:AffHenselianPairs}, 
Lemma \ref{lm:smoothretraction}, and Proposition \ref{prop:lrootLineBun}. 

\subsection{Affine henselian pairs}
Let us define henselian pairs as 
the class of closed immersions in the category of schemes
that have left lifting property with respect to the class of \'etale morphisms.
Consider the diagram 
\begin{equation}\label{eq:henseletsmdiaglift}\xymatrix{
Z\ar[d]^i\ar[r] & X^\prime\ar[d]^p\\
U\ar@{-->}[ru] \ar[r]& X
}\end{equation}
\begin{definition}\label{def:HensPair}
A closed immersion of schemes $i\colon Z\to U$ is called \emph{henselian pair} if and only if
for any \'etale morphism $p\colon X^\prime \to X$, 
for any commutative square \eqref{eq:henseletsmdiaglift}
there is a diagonal morphism such that the diagram commutates.
\end{definition}

\begin{example}
In particular, if the base change functor along a closed immersion $i\colon Z\hookrightarrow  U$ 
induces the equivalence of the categories of \'etale schemes over $U$ and $Z$,
then $i$ defines henselian pair.
This is the case if $U$ is affine and the vanishing ideal $I(Z)$ is nilpotent,
by \cite[Lemma 15.11.2]{StacksProject}.

Then by \cite[Lemma 15.11.4]{StacksProject} 
any $I$-adically complete ring $A$ defines the henselian pair $\Spec A/I\hookrightarrow \Spec A$.
\end{example}

A henselian pair $Z\hookrightarrow U$ is called \emph{affine} if both schemes $U$ and $Z$ are affine.
The henselian pairs are usually considered in literature in the affine case, so the term henselian pair usually means affine henselian pair.
The affine henselian pairs have the following list of equivalent definitions.

\begin{lemma}\label{lm:AffHenselianPairs}
Let $A$ be a ring, $I\subset A$ be an ideal; $U=\Spec A$, $Z=Z(I)=\Spec A/I$ be the vanishing locus.
Then 
the following conditions are equivalent:
\begin{itemize}
\item[(0)]
For any quasi-affine smooth morphism $p\colon X^\prime \to X$,
for any commutative square \eqref{eq:henseletsmdiaglift}
there is a diagonal morphism such that the diagram commutes.

\item[(1)]
For any \'etale morphism $p\colon X^\prime \to X$, 
for any commutative square \eqref{eq:henseletsmdiaglift}
there is a unique diagonal morphism such that the diagram commutes.

\item[(2)]
$i\colon Z\hookrightarrow U$ defines a henselian pair.

\item[(3)]
For any integral $A$-algebra $C$ there is an isomorphism $\mathrm{Idem}(C)\simeq \mathrm{Idem}(C\otimes A/I)$, where $\mathrm{Idem}$ denotes the set of idempotents. 

\item[(4)]
For any finite $A$-algebra $C$ there is an isomorphism $\mathrm{Idem}(C)\simeq \mathrm{Idem}(C\otimes A/I)$. 

\item[(5)]
$I$ is in Jacobson redical of $A$,
and for any monic polynomial $f\in A/I[t]$, 
any factorisation $\overline f=\overline g\overline h\in A/I[t]$ with monic generating the unit ideal in $A/I[t]$, where $\overline f$ is the image of $f$, 
admits a lift $f=gh\in A[t]$ with $g,h$ monic polynomials.

\item[(6)] (Gabber)
$I$ is in Jacobson redical of $A$, and any monic polynomial
\begin{equation}\label{eq:tildeTn(1-T)}
f=T^n(1-T)+a_n T^{n-1}+\dots +a_0\in A[t] 
\end{equation} with $a_i\in I$, $n\geq 1$, has a root $\alpha\in 1+I$.

%
\end{itemize}
\end{lemma}

\begin{proof}
The equivalences (3)$\Leftrightarrow$(4)$\Leftrightarrow$(5)$\Leftrightarrow$(6) are given by \cite[Lemma 15.11.6]{StacksProject}.

In each one of the parts (0,1,2) it could be assumed without loss of generality that the morphism $U\to X$ in diagram \eqref{eq:henseletsmdiaglift} is identity.
Actually, the category of lifts in a given diagram \eqref{eq:henseletsmdiaglift} is equivalent to 
the category of lifts in the diagram like \eqref{eq:henseletsmdiaglift} with $X$ replaced by $U$, and $X^\prime$ replaced by $X^\prime\times_X U\to U$.


Assume (4), we are going to prove (2).
Note that for any scheme $Y$ the set $\mathrm{Idem}(\calO_Y(Y))$ equals to the set of splittings $Y=Y_0\amalg Y_1$.
Given a diagram \eqref{eq:henseletsmdiaglift} such that $U=X$. By Zariski's main theorem \cite[Theorem~8.12.6]{GD67} it follows that the morphism $p$ passes throw the composition 
\[X^\prime\xrightarrow{j} \overline{X^\prime}\xrightarrow{\overline{p}} X=U\] 
for an open immersion $j$ and a finite morphism $\overline{p}$. 
Then by point (4) it follows that there is a splitting $\overline{X}^\prime=X_r\amalg \hat X_r$ such that 
\[X_r\times_U Z= r(Z), \hat X_r\times_U Z= \overline{X}^\prime\times_U Z - r(Z).\] So the morphism $p_r\colon X_r\to U$ is finite and surjective, and such that $X_r\times_U Z\cong Z$. Hence $p_r$ is a closed immerison.
Since $p$ is \'etale, it follows that $p_r$ is \'etale over $Z$. Hence $p_r$ is an isomorphism, and $X_r\cong U$. 
The inverse morphism $U\to X_r$ defines the required lift $U\to X$ for $p$.

Assume (2), we are going to prove (6). 
Consider the morphism $p\colon Z(f)\to U$ for the polynomial $f$ given by \eqref{eq:tildeTn(1-T)}.
Then $p$ is \'etale over the closed subscheme $\{1\}\times Z$ in sense of the immersions $\{1\}\times Z\hookrightarrow Z(f)\hookrightarrow \A^1\times U$.
Hence by (2) there is a lift $\alpha\colon U\to Z(f)$ of the morphism $Z\cong \{1\}\times Z\to Z(f)$. The composite regular map \[U\xrightarrow{\alpha} Z(f)\to \A^1\times U\] gives the root $\alpha\in 1+I\subset \calO_U(U)$.


Thus we have the equivalences (2)$\Leftrightarrow$(3)$\Leftrightarrow$(4)$\Leftrightarrow$(5)$\Leftrightarrow$(6).
The implications (0)$\Rightarrow$(1)$\Rightarrow$(2) are immediate.

To prove the equivalence of (1) and (2) 
we are going to show that if for all squares \eqref{eq:henseletsmdiaglift} in (2) there exists a lift, then for each such square the lift is unique. 
It follows from (2) that $Z$ has nonempty intersection with each irreducible component of $U$.
Indeed, if there is a Zariski open neighbourhood $U^\prime$ of $Z$ in $U$ then by the lifting property it follows that the canonical immersion $U^\prime\to U$ has a left inverse, and hence $U=U^\prime$.
Assume now that we are given with square \eqref{eq:henseletsmdiaglift} such that $U=X$, and there are two lifts $s_0,s_1\colon U\to X^\prime$. Then since $s_0$ and $s_1$ are left inverse to the canonical projection $X^\prime\to X=U$, it follows that the images $s_1(U)$ and $s_0(U)$ are closed subschemes. 
Moreover, since $p$ is \'etale, $s_1(U)$ and $s_0(U)$ are clopen subschemes in $X^\prime$. Finally, since $s_0(U)\cap s_1(U)\supset i^\prime(Z)$, and $Z$ intersects with each irreducible component of $U$, it follows that $s_1(U)=s_0(U)$.

The implication from (2) to (0) follows by Lemma \ref{lm:QAffSmoothLift} proven in Subsection \ref{subsection:LiftingPropertyHensePairs}.
%
%
\end{proof}

\subsection{Lifting property}\label{subsection:LiftingPropertyHensePairs}

By Definition \ref{def:HensPair} henslian pairs satifies the left lifting property with respect to \'etale morphisms.
The class of smooth morphisms in the category of affine schemes of finite type over an affine 
base scheme $S$
can be defined via the right lifting property with respect to the class of affine henselian pairs.



\begin{lemma}\label{lm:etaledifferential}
Given a 
smooth $S$-scheme $X\in \Sm_S$ of relative dimension $d$ at a point $x\in X$ over 
a noetherian base scheme $S$.
Let $f=(f_1,\dots, f_d)$ be a vector of regular functions on $X$ such that 
the differential $\mathrm{d}_x f$ of $f$ at $x$ defines an isomorphism $T_x X\cong \mathbf{1}^d_x$.
Then $Z(f)$ is \'etale over $S$ at $x$.
\end{lemma}
\begin{proof}
We deduce the claim from \cite[Lemma 29.35.15.(8)]{StacksProject}.
Without loss of generality we can assume that $S$ is affine, and $X$ is smooth affine over $S$.
Moreover, shrinking $X$ to a Zariski open neighbourhood of $x$ we get that the tangent bundle $T_X$ is trivial.

Then there is a closed immersion $X\hookrightarrow \A^N_S$. Since $T_X$ is trivial, the normal bundle $N_{X/\A^N_S}$ is stably trivial. 
Hence for some integer $N^\prime>N$ we get a closed immersion $X\hookrightarrow \A^{N^\prime}_S$ that the normal bundle is trivial. 
Indeed, if $N_{X/\A^N_S}\oplus \mathbf{1}^{N^\prime - N}\simeq \mathbf{1}^{N-d}$, then $X\hookrightarrow X\to \A^N_S=\A^N_S\times_S \{0\}_S\to \A^{N^\prime}_S$ is the required immersion.

Choose a regular functions $f_{d+1}, \dots, f_N$ on $\A^{N}_S$ such that the differential of the vector $(f_{d+1}, \dots, f_{N})$ induces the isomorphism $N_{X/\A^N_S}\cong \mathbf{1}^{N-d}_X$.
Thus the local scheme $Z_x$ of the vanishing locus $Z=Z(f_1,\dots f_d)\subset X$ at the point $x$ equals the vanishing locus $Z(f_1,\dots,f_N)$ in $\A^N_S$.
Now by part (8) of \cite[Lemma 29.35.15]{StacksProject} it follows that $Z(f_1,\dots,f_N)$ is \'etale over $S$.
\end{proof}

\begin{lemma}\label{lm:TTNsplitting}
Given a closed immersion of smooth affine schemes $i\colon X\hookrightarrow Y$ over an affine base scheme.
Then there are a splittings $i^*(T_{Y})\simeq T_X\oplus N_{X/Y}$,
where $T_X$ and $T_{Y}$ denotes the tangent bundles and $N_{X/Y}$ denotes the normal bundle.
\end{lemma}
\begin{proof}
Since $X$ and $Y$ are smooth there is a short exact sequence of vector bundles on $X$:
$T_X\hookrightarrow i^*(T_{Y})\twoheadrightarrow N_{X/Y}$.
The last sequence splits, since $X$ is affine,
and the category of finite rank vector bundles on $X$ is equivalent to the category of finite rank projective modules over the ring of regular functions $\calO_X(X)$.
\end{proof}

\begin{lemma}\label{lm:SmSMtrivtangbundle}
For any smooth affine scheme $X$ over an affine scheme $S$ there is a smooth affine scheme $X^\prime$ with trivial tangent bundle and morphisms $X\to X^\prime\to X$ that composite morphism equals identity.
\end{lemma}
\begin{proof}
Without loss of generality we can assume that $S$, and $X$ are connected.
Then $X$ is the pure dimension $\dim_S X=d\in \mathbb Z$ over $S$.

By Lemma \ref{lm:TTNsplitting} we have 
$ T^\vee_X\oplus  N^\vee_{X/\A^N_k}\simeq i^*( T^\vee_{\A^N_k})=\mathbf 1_X^n$. 
Denote $\widetilde X= N_{X/\A^N_k}^\vee$; let $p\colon \widetilde X\to X$, and $z\colon X\to\widetilde X$ be the projection, and by the zero section.
Let $\widetilde r\colon Z\to \widetilde X$ be the composition $Z\xrightarrow{r}X\xrightarrow{z}\tilde X$.
Then $\widetilde X$ has a trivial tangent bundle of the rank $n$; indeed, $T_{\widetilde X}^\vee\simeq p^*( T_X^\vee )\oplus T_{X/\A^N_k}^\vee\simeq \mathbf 1^n_{\widetilde X}$. 
So by the above  there is a lift $\widetilde e\colon U\to \widetilde X$ of $\widetilde r$. 
Then $e=p\circ \widetilde e$ is the required lift of $r$.
\end{proof}

\begin{lemma}\label{eq:ConnComponentsHensepair}
Given a henselian pair $Z\hookrightarrow U$. Then if $Z$ is connected, then $U$ is connected.
\end{lemma}
\begin{proof}
In the affine case the claim follows immediate from Lemma \ref{lm:AffHenselianPairs}.(4) applied to $C=A=\calO_U(U)$.
In general, we deduce the claim from Definition \ref{def:HensPair}.
Let $Z=Z_1\amalg Z_2$. Consider the Zariski open neighbourhood $U-Z_2$ of $Z_1$ in $U$.
Then by the lifting property in sense of square \eqref{eq:henseletsmdiaglift} for $X^\prime=U-Z_2$, $X=U$, we see that there is an immersion $U\to U-Z_2$. Hence $Z-2=\emptyset$.
\end{proof}

\begin{lemma}\label{lm:AffSmoothLift}
Let $Z\hookrightarrow U$ be an affine henselian pair.
Let $X$ be a smooth affine $U$-scheme.
Then for any morphism of $U$-schemes $r\colon Z\to X$ there is a lift $e\colon U\to X$.
\end{lemma}
\begin{proof}[Proof of Lemma \ref{lm:AffSmoothLift}]

Without loss of generality by Lemma \ref{eq:ConnComponentsHensepair} we can assume that $Z$, $U$, and $X$ are connected. 
By Lemma \ref{lm:SmSMtrivtangbundle} any smooth affine morphism $X\to U$ is a retract of a smooth affine morphism $X^\prime\to U$ with trivial tangent bundle. So without loss of generality we can assume that the tangent bundle of $X$ is trivial of a constant rank $d=\dim_U X$.

Consider a closed subscheme $\Gamma\subset X$ that is the graph of the morphism $r$ over $U$.
Since $T_X$ is trivial, there are functions 
$f=(f_1,\dots f_d)\in \calO(X\times_U Z)^d$, 
such that $f\big|_Z=0$ and the differential $df\colon N_{\Gamma/X\times_U Z}^\vee\to \mathbf{1}^d_\Gamma$ is an isomorphism. 
Denote $X^\prime=X-(Z(f)-\Gamma))$.
Since the scheme $X$ is affine,
there is 
$\widetilde f\in \calO(X\times_S U)^d$ that is a lift of $f$.

%
By Lemma \ref{lm:etaledifferential} $Z(\widetilde f)$ is \'etale over $U$; 
Since $Z\to U$ is a henselian pair, 
there is a lift $e^\prime\colon U\to Z(\widetilde f)$ 
in the diagram
\[\xymatrix{ Z\ar[dr] & \Gamma\ar[l]^{\simeq}\ar[d]\ar@{^(->}[r] & Z(\widetilde f)\ar[r]^j\ar[d] & X^\prime\ar[dl]\ar[r]^v & X\ar[dll]
\\ 
& U\ar@{=}[r]\ar@{-->}[ur]^{e^\prime} & U
.}\]
The composite morphism $e=v j e^\prime$ is the required lift.
\end{proof}

\begin{lemma}\label{lm:QAffSmoothLift}
Let $Z\hookrightarrow U$ be an affine henselian pair.
Let $X$ be a smooth quasi-projective $U$-scheme.
Then for any morphism of $U$-schemes $r\colon Z\to X$ there is a lift $e\colon U\to X$.
\end{lemma}
\begin{proof}
Let $X=X_\mathrm{aff}-Y$ for an affine $U$-scheme $X_\mathrm{aff}$, and a closed subscheme $Y$.
Let $f\in \calO_{X_\mathrm{aff}}(X_\mathrm{aff})$ be such that $f\big|_Y=0$, $f\big|_{r(Z)}=1$.
Set $X^\prime=X_\mathrm{aff}-Z(f)$. Then $X^\prime$ is a Zariski open neighbourhood of $r(Z)$ in $X$ that is affine over $U$.
Now the cliam follows by Lemma \ref{lm:AffSmoothLift}
\end{proof}

\subsection{\'Etale neighbourhoods and henselizations}

\begin{definition}
Let $Z\subset X$ be a closed immersion of schemes.
An \emph{etale neighbourhood} of $Z$ in $X$ 
is a closed immersion $Z\subset X^\prime$, 
and an etale morphism $X^\prime\to X$ 
such that the diagram 
\[\xymatrix{
& X^\prime\ar[d]\\
Z\ar@{^(->}[r]\ar[ur] & X
}\]
commutes.
\end{definition}
\begin{remark}
For any \'etale neighbourhood $X^\prime \to X$ there is an open immersion $X^{\prime\prime}\to X^\prime$ 
such that $Z\simeq X^{\prime\prime}\times_X Z$.
\end{remark}

Given a closed immersion $Z\to X$.
The identity map $X\to X$ is an \'etale neighbourhood. For any two neighbourhoods $X^\prime\to X$, $X^{\prime\prime}\to X$ the morphism $X^{\prime}\times X^{\prime\prime}\to X$ is a neighbourhood.
So there is a filtering set of \'etale neighbourhoods of $Z$ in $X$.

\begin{definition}
Let $Z\hookrightarrow X$ is a closed immersion of affine schemes.
The projective limit 
\begin{equation}\label{eq:XhZlim}
X^h_Z=\varprojlim_{X^\prime\to X} X^\prime,
\end{equation}
where $X^\prime$ runes over the filtering set of \'etale neighbourhoods of $Z$ in $X$
is called \emph{henselization of $X$ along $Z$.}
\end{definition}

It follows immediate from the definitions that $Z\hookrightarrow X^h_Z$ is a henselian pair.
The limit in \eqref{eq:XhZlim} exists in the category of schemes for any 
$X$ and $Z$.
Since the result of the article deals with the affine schemes case, we write the proof for this situation.

\begin{lemma}\label{lm:XhZAffexists}
For any affine schemes $X=\Spec A$ and $Z=\Spec A/I$ 
the projective limit \eqref{eq:XhZlim} exists, and the scheme $X^h_Z$ is affine, $X^h_Z=\Spec A^h_I$. 
\end{lemma}
\begin{proof}
Any closed subscheme in an affine scheme has an affine Zariski open neighbourhood.
Hence any \'etale neighbourhood $X^\prime\to X$ of the closed subschemes $Z$ in the affine schemes $X$ has a shrinking given by an affine scheme. 
The first claim follows, 
since for any filtering system of affine schemes $X_i=\Spec A_i$, $I\in I$, the projective limit $\varprojlim_{i\in I} X_i$ equals $\Spec \varinjlim_{i\in I} A_i$. 
%
\end{proof}

\begin{remark} 
$A^h_I$ is 
the subring in the $I$-adical completion $\hat A_I=\varprojlim_n A/I^n$
such that $\alpha\in A^h_I$ if $f(\alpha)=0$ for some polynomial $f(T)\in A[t]$ such that $\overline{\alpha}\in A/I$ is a separable root of $\overline f(T)\in A/I[T]$.
\end{remark}

\subsection{Smooth affine henselian pairs}

A \emph{smooth affine henseian pair} $Z\hookrightarrow U$ over a scheme $S$ is an affine henselian pair such that $Z$ is smooth, 
and $U$ is essentially smooth.
Equivalently, smooth (affine) henselian pairs is the henselization of smooth (affine) closed pair.

\begin{definition}\label{def:SmHensPair}
\emph{A smooth (affine) henselian pair} over a base scheme $S$ is a henselian pair of the form $Z\to X^h_Z$ for a closed immersion of smooth (affine) schemes $Z\hookrightarrow X$ over $S$. 
\end{definition}


In the case of smooth affine henselian pair $Z\to U$ over a base scheme $S$
there is a natural (functorial) lift in diagrams \eqref{eq:henseletsmdiaglift}
for all smooth quasi-affine morphisms $X^\prime\to X$ (simultaneously).
%
%
The existence is already proven in subsection \eqref{subsection:LiftingPropertyHensePairs}, and the functoriality follows 
since there is the universal square 
given by $X^\prime=Z$, $X=S$.

In other words there is a retraction 
$r\colon U\to Z$, $r\circ i=\id_Z\colon Z\to Z$. 
Moreover, such a morphism $r$ is essentially smooth.
So, in the case of smooth affine henselian pairs 
the result of the previous section can be formulated equivalently as follows.

\begin{lemma}\label{lm:smoothretraction}
Given a closed immersion $i\colon Z\to X$ in $\Sm_S$ over a base $S$, such that both schemes $X$ and $Z$ are affine; let $U= X^h_Z$.
%
Then there is an \'etale neighbourhood $\widetilde X$ of $Z$ in $X$,
and a smooth morphism $r\colon \widetilde X\to Z$
such that 
\[r\circ i=\id_Z,\]
where $i\colon Z\hookrightarrow \widetilde X$ is the closed immersion.

Moreover, if the tangent bundle $T_Z$ and the normal bundle $N_{Z/U}$ are trivial, 
then there is $\widetilde X$ as above and such that
the relative tangent bundle $T_{\widetilde X/Z}$ is trivial.
\end{lemma} 
\begin{proof}
Since $Z\hookrightarrow U$ is an affine henselian pair, and $Z$ is smooth over $S$ 
by assumption, 
Lemma \ref{lm:AffHenselianPairs}.(5) applied to 
the square
\[\xymatrix{ Z\ar[r]^{\id_Z}\ar[d]^i & Z\ar[d]\\ X\ar[r] & S},\]
gives a retraction $r^\prime\colon U\to Z$, $r^\prime\circ i=\id_Z\colon Z\to Z$. 
Then, since $U=X^h_Z$, it follows that the map $r^\prime\colon U\to Z$ is induced by the map $r\colon \widetilde X\to Z$ for some (affine) \'etale neighbourhood $\widetilde X$ of $Z$ in $X$.

The differential of the map $r$ on the subscheme $Z$ induces the surjective homomorphism on tangent bundles $i^*(T_X)\to T_Z$. 
Hence $r$ is smooth over the subscheme $Z$ in $U$. 
Whence shrinking $\widetilde X$ to a Zariski open neighbourhood of $Z$ in $\widetilde X$
we get that $r$ is smooth.

We are proceed with the rest additional assertion in the lemma. Let $T_{\widetilde X/Z}$ be the relative tangent bundle of the morphism $r$.
Then \[T_{\widetilde X/Z}\big|_Z\cong N_{Z/\widetilde X}\cong N_{Z/U}\]
is trivial.
Since $\widetilde X$ is affine, it follows that
$T_{\widetilde X/Z}$ is trivial on some Zariski open neighbourhood of $Z$. 
So shrinking $\widetilde X$ again
we get that $T_{\widetilde X/Z}$ is trivial.
\end{proof}



\subsection{Division in the Picard group}

Consider an affine henselina pair $i\colon Z\to U$, and a projective $U$-scheme $P$.
Then the closed immersion $i_P\colon P\times_U Z\to P$ is a henselian pair.
Formally, we do not use this fact, but use some rigidity results on this henselian pair,
that follows from 
the proper base change theorem for \'etale cohomologies with coefficients a torsion sheaf,
and its affine analogue proven in \cite{Gab94,HEtfin,Elkiksoleqhens}. 
\begin{lemma}\label{lm:GmrepressetRig}
Let $V$ be a henselian local scheme, and $z\in V$ be the closed point.
Then $\mu_l(z)\cong \mu_l(V)$ for all inverers $l$ inverible at $z$.
\end{lemma}
\begin{proof}
Since $l$ is invertible at $z$, $l$ is invertible on $V$, and on any scheme over $V$. 
Then the morphism of $V$-schemes $l\colon \mathbb{G}_m\to \mathbb{G}_m; t\mapsto t^l$ is \'etale.
The prehseaf $\mu_l$ on $\Sch_V$ is represented by the scheme 
$\mu_l=\mathbf{G}_m\times_{l,\mathbf{G}_m,1}V$, where $1\colon V\to \mathbf{G}_m$ is the closed immersion given by the unit function.
Thus the scheme $\mu_l$ is \'etale over $V$.
Since the sheaf $\mu_l$ on $\Sch_U$ is represented by an \'etale $V$-scheme, the claim follows by Lemma \ref{lm:AffHenselianPairs}.(1).
\end{proof}

\begin{theorem}[Consequence of proper base change theorem and the affine analogue]
\label{th:RigidPUZmul}
Given an affine henselian pair $Z\to U$, and a projective morpism $P\to U$.
Let $F$ be an \'etale sheaf of the category of $U$-schemes, such that for any local strictly henselian scheme, $F(V)\simeq F(z)$, where $z$ is the closed point.

Then for any $l\in \mathbb Z$ ivertible on $Z$, the canonical morpihsm 
$H^i_{\et}(P\times U Z,F)\to H^i_{\et}(P,F)$
is an isomorhism for all $i\in \mathbb Z$.
\end{theorem}
\begin{proof}

Since the higher direct images of a torsion abelian sheaves are torsion, 
combining the proper base change theorem, see 
\cite[Theorem 58.87.11]{StacksProject} or \cite[ch~4, Corollary~2.7]{Milne-EtCoh},
applied to the closed immersion $i\colon Z\to U$, and the affine analogy, see 
\cite[Theorem 58.79.7]{StacksProject}, applied to the same morphism we get
\[ H^i_\et(P\times_U Z, i^*_P(F\big|_{\Et_P}))\simeq H^i_\et(P,F\big|_{\Et_P}), i\in \mathbb Z,\]
where $\mu_l$ denotes the sheaf on $\Et_{P}$, and $i^*_P$ denote the morphism of small \'etale sites.

Let $V$ be (strictly) henselian local scheme over $U$, and $z\in U$ be the closed point.
Then by assumption $F(z)\cong F(V)$.
Since strictly henselian local schemes defines an enough set of points for the \'etale topology, 
the \'etale sheaf $F$ on the small \'etale site $\Et_{P_Z}$ over $P_Z=P\times_U Z$ equals the inverse image $i_P^*(F\big|_{\Et_P})$ of the \'etale sheaf $F\big|_{\Et_P}$ on the small \'etale site $\Et_P$.
So the claim follows.
\end{proof}

In the next sections we use the following consequenct that follows from the above theorem by the same argument as in \cite[sect 2]{SV96} or \cite[lemma 5.1]{AD-Rigid} for local henselian case.
\begin{proposition}\label{prop:lrootLineBun}
Let $i\colon Z\hookrightarrow U$ be an affine henselian pair.
Let $\relcurve\to U$ be a projective flat morphism of pure dimension one.
Let $n\in \mathbb Z$ invertible on $Z$. 

Assume $\calL$ is a line bundle on $\relcurve$ such that $i^*(\calL)\simeq \calO_Z$ 
Then for any $l\in \mathbb Z$, $l\in \calO(U)^\times$ there is $\calL^\prime$ such that $(\calL^\prime)^l\simeq\calL$, and $i^*\calL^\prime$ is trivial.
\end{proposition}
\begin{proof}
The sheaf $\calL$ defines an element in the Picard group on $\mathcal C$.
$\Pic(-)\simeq H^1_\Zar(-,\Gm)\simeq H^1_{et}(-,\Gm)$, 
where the second equality is the Hibert theorem 90 \cite[ch 3]{Milne-EtCoh}. 
The short exact sequence $0\to\mu_l\to\Gm\xrightarrow{l}\Gm\to 0$ of etale sheaves of groups on $\Sch_U$, 
 and the inverse image along the immersion $C\to \relcurve$, where $C=\relcurve\times_U Z$, 
  give the the morphism of long exact sequences
$$\xymatrix{
\ar[r]&H^1_{et}(\ovrelcurve,\mu_l)\ar[r]\ar@{=}[d]&\Pic(\ovrelcurve)\ar[r]^{\tilde p_l}\ar[d] & \Pic(\ovrelcurve)\ar[r]\ar[d]& 
H^2_{et}(\ovrelcurve,\mu_l)\ar@{=}[d]\ar[r]& \\
\ar[r]&H^1_{et}(C,\mu_l)\ar[r]&\Pic(C)\ar[r]^{p_l} & \Pic(C)\ar[r]& 
H^2_{et}(C,\mu_l)\ar[r]&
}$$
where the left and the right vertical isomorphisms are provided by Theorem \ref{th:RigidPUZmul} and Lemma \ref{lm:GmrepressetRig}.
Hence $$\Ker(\tilde p_l)\twoheadrightarrow \Ker(p_l), \Coker(\tilde p_l)\hookrightarrow \Coker(p_l)$$
The inclusion at the right side above implies the claim.
\end{proof}

\section{Framed presheaves.}\label{subsect:FrCor}




In the article we construct some morphisms and $\A^1$-homotopies  
in the stable motivic homotopy category $\SH(S)$ appeared in works Morel and Voevodsky \cite{MV99,Mor0} and Jardine \cite{Jar00}.
We use so called framed correspondences as a tool that gives a precise construction for morphism of a certain type in $\SH(S)$.

The framed correspondences ware introduced in \cite{Voev-FramedCorrM} by Voevodsky, 
and then studied and used as the basic ingredient by Garkusha and Panin in \cite{GP14}.
The pointed set of framed correspondences of the level $n$ from a 
scheme $X$ to a scheme $Y$ 
equals the (pointed) hom-set in the category of pointed Nisnevich sheaves
\begin{equation}\label{eq:Voevodsky'sLemma}\Fr_n(X,Y)\simeq \mathrm{Shv}_\bullet(X_+\wedge (\PP^1/\infty)^{\wedge n}, Y_+\wedge (\A^1/(\A^1-0))^{\wedge n}),\end{equation} 
The basic computational power of framed correspondences 
is provided by the precise description of the set $\Fr_n(X,Y)$ defined by \eqref{eq:Voevodsky'sLemma}.
Def. \ref{def:Fr} comes from the mentioned description,
while the isomorphism \eqref{eq:Voevodsky'sLemma} is called Voevodsky's lemma on framed corr. 
In what follows we don't use formally isomorphism \eqref{eq:Voevodsky'sLemma}, 
but only Def. \ref{def:Fr}, and morphism in \eqref{eq:Voevodsky'sLemma} from the left side to the right that is recalled below the definition. 

Let us note that framed correspondences ware basically considered in the base field case, but the 
constructions we use are defined word by word
for any noetherian and separated base scheme $S$.

\begin{definition}\label{def:Fr}
Let $X,Y$ be smooth schemes over the base scheme $S$.
An \emph{explicit framed correspondence} of a level $n$ over $S$ is a set of data $(Z,\mathcal V, \varphi, g)$ where 
\begin{enumerate}
\item $Z \subset \A^n$ is a closed subscheme, 
\item $e\colon \mathcal V\to \A^n_X$ is an \'etale morphism such that $e^{-1}(Z)\simeq Z$,
\item $\varphi=(\varphi_i)$, $0<i\leq n$, $\varphi_i$ are regular function on $\mathcal V$ such that $Z(\varphi_1,\dots \varphi_n)=Z$,
\item and $g\colon \mathcal V\to Y$ is a morphism of $S$-schemes.
\end{enumerate}
%
\end{definition}

It follows that functions $\varphi$ above defines a set of functions on the henselization $(\A^n_X)^h_Z$.
The pointed set of framed correspondences $\Fr_{n}^S(X,Y)$ for $X,Y\in \Sm_S$ 
is the set of classes of explicit framed correspondences up to the equivalence relation with respect to the choice of the \'etale neighbourhood $\mathcal V$ of $Z$.

\begin{definition}
Define the composition  of framed correspondences
$\Fr_n(X,Y)\times \Fr(Y,V)\to \Fr_{n+l}(X,V)$
\[((Z_1,\mathcal V_1, \varphi, g),(Z_1,\mathcal V_1, \varphi, g))\mapsto (Z_1,\mathcal V_1\times_Y \mathcal V_2, \varphi^\prime_1,\varphi_2^\prime, g_2^\prime)\]
where $\varphi_1^\prime, \varphi_2^\prime$ are inverse images of $\varphi_1$ and $\varphi_2$ with respect to the projections 
$\pr_i\colon \mathcal V_1\times_Y\mathcal V_2\to \mathcal V_i$, $i=1,2$, respectively, and $g^\prime=g_2\circ \pr_2$.  

Denote by $\Fr_+(S)$ the category enriched over (graded) pointed sets
 with objects being a smooth $S$-schemes and morphisms
\[\Fr_+(X,Y) = \bigvee_{n\geq 0}\Fr_n^S(X,Y), X,Y\in \Sm_S.\]
\end{definition}
\begin{remark}
In difference to \cite[def. 2.1]{GP14} in the above definition of explicit framed correspondences
 $Z$ is not a reduced scheme in general, sine we require scheme-theoretic equality $Z(\varphi_1,\dots \varphi_n)=Z$ instead of the set-theoretic $\bigcap_i \{\varphi_i=0\}=Z$.
The set $\Fr_n^S(X,Y)$ is the same in both cases.
\end{remark}


To describe the morphism from the left side to the right in \eqref{eq:Voevodsky'sLemma}
consider an explicit framed correspondences
$(Z, \mathcal V, \varphi, g)$ from $X$ to $Y$;
then the morphism of the pointed sheaves 
$X_+\wedge (\PP^1/\infty)^{\wedge l}\to Y_+\wedge (\A^1,\A^1-0)^{\wedge l}$
is defined by the covering and morphisms 
\begin{gather*}
(X\times (\PP^1)^n)-Z) \cup (X\times \A^n) = X\times (\PP^1)^n\\
\begin{array}{llll}
&(X\times (\PP^1)^n)-Z)&\to& *,\\
(g,\varphi)\colon &(X\times \A^n)&\to& Y\times \A^n.
\end{array}
\end{gather*}

The composite map
\begin{multline}\label{eq:FrSchnisSH[P,T]}\Fr_b(X,Y)\to \mathrm{Shv}_\bullet(X\wedge (\PP^1/\infty)^{\wedge n}, Y\wedge (\A^1/(\A^1-0))^{\wedge n})\to\\ 
[X_+\wedge T^{\wedge n}, Y_+\wedge T^{\wedge n}]_{\SH(S)}=[X_+, Y_+]_{\SH(S)}\end{multline}
defines the functor $\Fr_+(S)\to \SH(S)$.
\par Any regular map $g\colon X\to Y$ defines a framed correspondence of the level $0$, given by $(X,X,,g)$ 
(with the empty set $\{\varphi_i\}$). This gives a functor $\Sm_S\to \Fr_+(S)$.
So the canonical functor $\Sm_S\to\SH(S)$ passes through
\begin{equation}\label{eq:Fr_+->SH}\Sm_S\to \Fr_+(S)\to \SH(S).
\end{equation}

\begin{definition}
An presheaf on the category $\Fr_+(S)$  
is called \emph{a presheaf with framed transfers (or a framed presheaf)}.

Let $c\in \Fr_n(X,Y)$ and $F\colon \Fr_+(S)\to \mathrm{Set}^\mathrm{op}$ be a framed presheaf, denote by $c^*\colon F(Y)\to F(X)$ the induced morphism.
\end{definition}
It follows that any presheaf $E\colon \SH(S)\to \Ab^\op$ defines a presheaf with framed transfers 
\begin{equation}\label{eq:ZFr->Ab}F\colon \Fr_+(S)\to \Ab^\op.\end{equation}

\subsection{$\sigma$-stability and linear framed correspondences.}

Different framed correspondences from $X$ to $Y$ can define the same class in $[X,Y]_{\SH(S)}$.
The following modifications 
of the set $\Fr_+(X,Y)$ 
reflects some structures and equivalences relations in $\SH(S)$. 

Shortly speaking Definitions \ref{def:ZF}-\ref{def:ovZF} give the sequence of functors
\begin{equation}\label{eq:SmFrZFovZSH}\Sm_k\to \Fr_+(S)\to \ZF_*(S)\to \ovZF_*(S)\to \SH(S),\end{equation}
where the category $\ZF_*(S)$ is the additivisation of $\Fr_+(S)$,
and $\ovZF_*(S)$ is the quotient-category that morphisms are $\A^1$-homotopy classes.

The categories $\Fr_+(S)$, $\ZF_*(S)$, and $\ovZF_*(S)$ are graded categories.
At the same time the framed correspondences of different levels 
can define the same class of morphisms $\SH(S)$, namely such an equivalence is given by the $\sigma$-suspension, see the next definition. 

\begin{definition}\label{def:Frsigma}
Denote by $\sigma_X\in \Fr_1(X,X)$ the framed correspondence given by \[(0\times X, \A^1\times X, t, \pr_X),\] where $\pr_X\colon \A^1\times X\to X$ is the projection, $t$ is the coordinate on $\A^1$.
Define presheaves $\Fr(-,Y)$ 
\[\Fr_+(-,Y)\to \Fr(-,Y)=\varinjlim \Fr_n(-,Y),\] 
where the morphisms in the inductive limit are given by the maps 
\[\Fr_n(-,Y)\to \Fr_{n+1}(-,Y)\colon c\mapsto \sigma_Y\circ c.\]
\end{definition}
\begin{definition}\label{def:ZF}
Denote by  $\ZF_n(X,Y)$ the abelian group generated by the classes of level $n$ framed correspondences between $X$ and $Y$ and relations
\begin{equation}\label{eq:additivity_proposrty}
[Z_1,\mathcal V-Z_2,\varphi\big|_{Z_1},g\big|_{Z_1}]+[Z_2,\mathcal V-Z_1,\varphi\big|_{Z_2},g\big|_{Z_2}]=
[Z,\mathcal V-Z,\varphi\big|_{Z},g\big|_{Z}]
\end{equation} 
\end{definition}

\begin{definition}\label{def:ovZF}
Define \[\ovZF_n(X,Y)=\Coker( \ZF_n(X\times\A^1,Y)\xrightarrow{[i_0]-[i_1^*]} \ZF_n(X,Y) ),\]
where $i_0,i_1\colon X\to X\times\A^1$ are the maps given by $X\times 0$ and $X\times 1$.
\end{definition}

Denote by $\ZF_*(S)$ and $\ovZF_*(S)$ the graded additive category with the same objects as $\Sm_S$ and morphisms $\bigoplus_{n\geq 0}\ZF_n(X,Y)$ and  $\bigoplus_{n\geq 0}\ovZF_n(X,Y)$.

Define the groups $\ZF(X,Y)=\varinjlim_{n} \ZF_n(X,Y)$ and $\ovZF(X,Y)=\varinjlim_{n}\ovZF_n(X,Y)$ 
with respect to the left $\sigma$-stabilisation, similarly to Def. \ref{def:Frsigma}.
There is a sequence of morphisms of presheaves on $\Sm_S$
\[
\mathrm{Map}(-,Y)\to \Fr(-,Y)\to \ZF(-,Y)\to \ovZF(-,Y) \to [\Sigma^\infty_{\PP^1}(-),\Sigma^\infty_{\PP^1}Y]_{\SH(k)}.
\]

\begin{definition}
%
%
%


A \emph{linear framed presheaf} over $S$ is 
a functor $\ZF_*(S)\to \Ab^\op$.

A framed presheaf is called $\sigma$-\emph{stable} if for any $X\in \Sm_S$ the framed correspondence $\sigma_\sigma(\id_X)\in \Fr_1(X,X)$ induces the isomorphism $\sigma_X^*\colon F(X)\simeq F(X)$.

\emph{A homotopy invariant linear framed presheaf} is an additive functor $\ovZF_*(S)\to \Ab^\op$. 
\end{definition} 

%

\begin{definition}
Let $\mathcal C$ be a category with finite coproducts. 
A functor $F\colon \mathcal C\to \Ab^\op$ on a category $\mathcal C$ is \emph{additive} if $F(X_1\amalg X_2)\simeq F(X_1)\oplus F(X_2)$.
\end{definition}
\begin{remark}
It follows form the definition that any linear framed presheaf $F$ is \emph{additive}, i.e.
$F(X_1\amalg X_2)=F(X_1)\oplus F(X_2)$ for any $X_1,X_2\in \Sm_S$.
\end{remark}

\begin{lemma}
Assume that $F$ is a homotopy invariant linear $\sigma$-stable framed presheaf.
Let $c_1,c_2\in \ZFr_*(X,Y)$ such that there is an equality of the images $[c_1]=[c_2]$ in $\ovZF(X,Y)$. Then 
the induced morphisms $c_1^*=c_2^*\colon F(Y)\to F(X)$ are equal. 
\end{lemma}
\begin{proof}
The claim follows from definitions.
\end{proof}

\begin{lemma}
\label{lm:EovZFSHandstable}
For any additive functor $E\colon \SH(S)\to \Ab^\op$ there is a homotopy invariant $\sigma$-stable linear framed additive presheaf $F$ that defines the same 
presheaf on $\Sm_S$ as $E$.
\end{lemma}
\begin{proof}
Decomposition \eqref{eq:SmFrZFovZSH} of the functor $\Sm_S\to\SH(S)$ imples that $E$ defines a homotopy invariant linear framed additive presheaf.
Since the morphism $\sigma_X\in \Fr_1(X,X)$ for any $X\in \Sm_S$ goes to the canonical isomorphism $\PP_1/\infty\to T$ it follows that the framed presheaf $E$ is $\sigma$-stable.
\end{proof}

\begin{example}\label{ex:univStLinFr}
The representable presheaves $\Fr_*(-,Y)$ and $\ZF_*(-,Y)$ obviously are the universal examples of framed and linear framed presheaves.
The universal example of a homotopy invariant $\sigma$-stable linear framed presheaf 
is the presheaf $\ovZF(-,Y)$. 
\end{example}
\begin{example}
The examples of a $\sigma$-stable framed linear presheaves are given by $K$-theory, Chow groups, Hermitian $K$-theory, derived Witt-groups, Chow-Witt groups, algebraic cobordisms and so on. 
\end{example}

\subsection{$l_\varepsilon$-torsion presheaves}\label{subsect:LepsilonTorsion} 

The category $\Fr_+(S)$ and the category of framed presheaves relate as an approximation to the category $\SH(S)$.
We could say that the category the linearised $\ZF_*(S)$ linear framed presheaves
relates 
to the category of $\mathrm{H}\mathbb Z$-modules in $\SH(S)$, where $\mathrm{H}\mathbb Z$ is the image of the ring (topological) Eilenberg-MacLane spectrum. 
Now we are going to give such an approximation for
$\mathrm{H}\mathbb Z/l_\varepsilon$-modules, where 
\[l_\varepsilon=\sum_{i=0}^{l-1}\langle (-1)^i\rangle\in [\mathbb S,\mathbb S]_{\SH(S)},\]
and $\langle \lambda \rangle$ for an invertible $\lambda\in \calO^\times_S(S)$ is induced by the endomorphism of motivic sphere $\Gm^{\wedge 1}\to \Gm^{\wedge 1}$ given by the map \[\Gm\to \Gm\colon x\mapsto \lambda x.\]

\begin{definition}
For an invertible function $\lambda\in \mathcal O^\times(S)$, define 
$\langle \lambda\rangle \in \Fr_1^S(\pt_S,\pt_S)$ 
given by $(0_S,\A^1_S,\lambda x,p)$, where $p\colon \A^1_S\to \pt_S$ is the canonical projection.
\end{definition}
\begin{definition}
Define $h=\langle 1\rangle+\langle -1\rangle\in \ZF_1(pt_S,pt_S)$.
Define an element $l_{\varepsilon}\in \ZFr^S_1(pt_S,pt_S)$ by the formula $l_\varepsilon=nh$, for $l=2n$, $l_\varepsilon=nh+\langle 1\rangle$, for $l=2n+1$.
\end{definition}
\begin{remark}
For any $l\in \mathbb Z_{\geq 0}$,
$[l_\varepsilon] = [(Z(x^l),\A^1,x^l,p)]\in \ovZF_1(pt,pt)$, where 
$p\colon \A^1\to pt$ is the canonical projection. 
\end{remark}
\begin{definition}
We say that a linear framed presheaf $F$ is $l_\varepsilon$-torsion iff $l_\varepsilon\cdot a=0$, $\forall a\in F(X)$, $\forall X\in \Sm_S$, where $l_\varepsilon\cdot a$ denotes the image of $a$ under the map $F(X)\to F(X)$ induced by $l_\varepsilon$.
\end{definition}
\begin{remark}\label{rem:l-epsilonTorsionEinSH}
In view of Lemma \ref{lm:EovZFSHandstable} it follows that, 
any $l_\varepsilon$-torsion additive presheaf $E\colon \SH(S)\to \Ab^\op$ defines a 
homotopy invariant $l_\varepsilon$-torsion $\sigma$-stable linear framed presheaf $F\colon \ZF(S)\to \Ab^\op$.
In particular, for any  
an $l_\varepsilon$-torsion spectrum $\mathcal E\in \SH(S)$ the presheaves $E^{*,*}$ of the cohomology theory represented by $\mathcal E$ are of such type. 
\end{remark}

Moreover, any
$l_\varepsilon$-torsion functor 
$F\colon \ZF_*(S)\to \Ab^\op$ 
passes throw 
some (additive) 
category $\ZF/l_\varepsilon\ZF(S)$
\[\ZF_*(S)\to \ZF/l_\varepsilon\ZF(S)\to \Ab^\op.\]  
\begin{definition}\label{def:lepsilonFr_n}
For $\lambda\in \calO(S)^\times$ and $n\in \mathbb Z_{\geq 0}$, define the map
\[
c\mapsto c\bullet\langle\lambda\rangle \colon \Fr_n(X,Y)\to \Fr_{n+1}(X,Y) \colon 
(Z,\mathcal V, \varphi,g)\mapsto  (Z,\mathcal V, \lambda x_0,pr^*\varphi,g\circ pr). 
\]
For any $j=1,\dots n$ define the map
\[
c\mapsto c^\lambda_j \colon \Fr_n(X,Y)\to \Fr_n(X,Y) \colon  (Z,\mathcal V, \varphi,g)\mapsto  (Z,\mathcal V, \varphi_1, \dots\lambda\varphi_j, \dots\varphi_n,g),
\]
where $pr\colon \A^{n+1}_S=\A^1\times\A^n_S\to \A^n_S$, and $\varphi$ denotes the vector $(\varphi_1,\dots\varphi_n)$.
Define homomorphisms 
\[\begin{array}{ll}
(-) \bullet l_\varepsilon \colon \ZF_n(X,Y)\to \ZF_n(X,Y), & 
(-)^{l_\varepsilon}_j \colon \ZF_n(X,Y)\to \ZF_n(X,Y), \\
\,[c]\mapsto l_\varepsilon\bullet [c] = \sum_{i=0}^{l-1} ([c]\bullet \langle(-1)^i\rangle), &
[c]\mapsto [c]^{l_\varepsilon}_i=\sum_{i=0}^{l-1} [c]^{(-1)^i}_j.
\end{array}\]
%
\end{definition}

Denote by 
$l_\varepsilon \ZF_n(X,Y)\subset\ZF_n(X,Y),$ 
the sum of the images of the maps $(-)^{l_\varepsilon}_j$ for all $j$;
denote by $l_\varepsilon \ovZF_n(X,Y)$ the image of the map $l_\varepsilon \ZF_n(X,Y)\to  \ovZF_n(X,Y)$.
\begin{definition}
Define the additive graded category $\ovZF_*/l_\varepsilon(S)$ 
with objects being smooth schemes
and
morphisms being groups $\ovZF_*(X,Y)/l_\varepsilon \ovZF_*(X,Y)$.

\end{definition}

\begin{definition}\label{def:MatrixAction}
For a matrix $G\in \GL_n(X)$ and 
$f=(f_1,\dots f_n)\in \calO(X)^n$, for $X\in \Sch_S$, 
denote by $G\cdot (f_1,\dots f_n)\in \calO(X)^n$ the image of $f$ under the endomorphism on $\calO(X)^n$ defined by $G$.
For a set of invertible functions $\lambda_1,\dots \lambda_n\in \calO(X)^\times$ denote by $\langle\lambda_1,\dots \lambda_n\rangle\in \GL_n(X)$ the diagonal matrix; then $\langle\lambda_1,\dots \lambda_n\rangle\cdot (f_1,\dots f_n)= (\lambda_1f_1,\dots \lambda_nf_n)$.

Denote by $\mathbb Z\GL_n(X)$, $X\in \Sch_S$ the free abelian group of the set $\GL_n(X)$.
For any 
\[G=\sum_{i-1}^l m_iG_i\in \mathbb Z\GL_m(X),\] 
and 
\[c=(Z,\mathcal V,\varphi_1,\dots \varphi_n,g)\in \Fr_n(X,Y)\] 
we denote
\[G\cdot c=\sum_{i=1}^l m_i[(Z,\mathcal V, G_i(\varphi_1,\dots \varphi_n), g)\in \ZF_n(X,Y).\]

We write $G_0\stackrel{\A^1}{\sim} G_1$ for $G_0,G_1\in \mathbb Z\GL_n(X)$ iff there is $H\in \mathbb Z\GL_n(X\times\A^1)$ such that $i^*_0(H)=G_1$, $i^*_1(H)=G_1$, $i_0,i_1\colon X\to X\times\A^1$ denotes the zero and unit sections. 
\end{definition}

\begin{lemma}\label{lm:l_varepsilon(-)cdot}
For any $\lambda\in \calO(S)^\times$, $n\geq 0$, $j=1,\dots n$, 
and $c=(Z,\mathcal V, \varphi_1,\dots \varphi_{n},g)\in \Fr^S_{n}(X,Y)$ 
we have 
$[\langle \lambda\rangle\bullet c]=[\sigma c^\lambda_j]\in \ovZFr_{n+1}^S(X,Y)$,
where 
$\varphi_i^\prime=pr^*(\varphi_i)$, $i=1,\dots n$, $pr\colon \A^1\times\A^n_X\to \A^n_X$.
\end{lemma}
\begin{proof}
It follows by Lemmas \ref{lm:diagSLinEn} and \ref{lm:ElementaryMatHomotopy} in the Appendix that there is
 $$H\in \mathbb Z\GL_n(X\times\A^1), 
i^*_0(H)=[\langle \lambda ,1,1,\dots 1\rangle], i^*_1(H)=[\langle 1,\dots 1,  \lambda ,1,\dots 1\rangle]
,$$ with $\lambda$ at the $j$-th position (see Def. \ref{def:MatrixAction} for the notation above), 
$i_0,i_1\colon X\to X\times\A^1$.
Then 
$$i^*_0(\widetilde c) = [\langle \lambda\rangle c], i^*_1(\widetilde c)=[\sigma c^\lambda],$$
where $\widetilde c=H\cdot p^*(\langle \lambda\rangle c)\in \ZFr_n(X\times\A^1,Y)$, and $p\colon X\times\A^1\to X$.
\end{proof}

\begin{lemma}\label{lm:lvarepsilonTorsion}
Any homotopy invariant $l_\varepsilon$-torsion $\sigma$-stable linear framed presheaf is 
a presheaf on the quotient-category $\ovZF_*/l_\varepsilon \ovZF_*(S)$ such that $\sigma_X^*\colon F(X)\to F(X)$ is an isomorphism. 
\end{lemma}
\begin{proof}
Let $a\in F(Y)$, $Y\in \Sm_S$, and $c\in \Fr_n(X,Y)$.
Then by Lemma \ref{lm:l_varepsilon(-)cdot} $(l_\epsilon c)^*(a) = (l_\epsilon\bullet c)^*(a) = l_\epsilon\cdot c^*(a)=0$. So the claim follows.
\end{proof}

\section{Normal framing of a variety and inner framed correspondences.}
\label{subsect:FrVarInnerFr}

In this section we make some variations and generalisations of the notion of framed correspondences.
As mentioned in the previous section 
the framed correspondences 
describes morphisms of the form
\[X\times (\PP^1/\infty)^{\wedge l}\to Y\wedge (\A^1/(\A^1-0))^{\wedge l-d}\]
for $d=0$.
The case $d\geq 0$ leads to the notion 
of normal framings.
At the same time if we replace $(\PP^1/\infty)^{\wedge l}$ by a motivic space $\ovrelcurve/\relcurve_\infty$ for a projective scheme $ \ovrelcurve$ of pure relative dimension $l$ over $X$, and a closed subscheme $\relcurve_\infty$ of relative codimension one
\[(\ovrelcurve/\relcurve_\infty) \to Y\wedge (\A^1/(\A^1-0))^{\wedge l}, l=\dim_X\relcurve\]
we get the notion of $\relcurve$-inner framed correspondences, 
where $\relcurve=\ovrelcurve-\relcurve_\infty$.
This notions allows to split the construction of a framed correspondence into simpler steps. 

Let $S$ be a noetherian scheme of a finite Krull dimension.

%

\subsection{Normal framing and inner framed correspondences.}

\begin{definition}
\label{def:Zar_normal_framing}
\emph{A framed $S$-scheme} is a scheme $\relcurve$ over $S$ of pure relative dimension $d$
equipped with the data
$(i,W,\psi)$, where 
\begin{enumerate}
\item $i\colon \relcurve \to \A^{m}_S$ is a closed immersion;
\item $j\colon W\to \A^{m}_S$ is a Zariski open neighbourhood of $\relcurve$; 
\item and $\psi=(\psi_1,\dots \psi_{m-d})$ is
a vector of regular functions on $W$ such that 
$i(\relcurve)=Z(\psi)$ where $Z(\psi)$ stands for the common zero locus of $\psi_i$-s.
\end{enumerate}
The closed immersion and the set of functions $\psi_i$, $i=1,\dots {m-d}$,
is called as the \emph{level $m$ framing} of $\relcurve$.

Define the set $\lF_m^\Zar(\relcurve)$ of equivalence classes of 
level $m$ Zariski normal framings of $\relcurve$ up to the shrinking of the Zariski neighbourhood $W$ of $\relcurve$ in $\A^m$. 
So any framings $\Psi_1=(i_1,W_1,\psi^1)$ and $\Psi_2=(i_2,W_2,\psi^2)$ such that $W_2$ is Zariski neighbourhood of $\relcurve$ in $W_1$ and $\varphi_2=\varphi_1\big|_{W_2}$ are equal in $\lF_m^\Zar(\relcurve)$. 
\end{definition}
\begin{definition}\label{def:innerFrnr}
Let $\relcurve$ be an affine scheme over $S$ of a pure relative dimension $d$. 
Let $X, Y$ be $S$-schemes.
A \textit{$\relcurve$-inner normal framed correspondence} from 
$X$ to $Y$ 
over $S$ 
is a set of data $(Z, \mu, g)$ where 
\begin{enumerate}
\item  $Z \to \relcurve\times X$ is a closed immersion, $Z$ is finite over $X$, 
\item 
$\mu\colon \calO(Z)^d\simeq \calI/\calI^2$, where $\calI=\calI_{\relcurve\times X}(Z)\subset \calO(\relcurve\times X)$ in the vanishing ideal,
\item $g\colon Z\to Y$ the a regular map.
\end{enumerate}
The (pointed) set of normal $\relcurve$-inner framed correspondences from $X$ to $\relcurve$ is denoted by $\Fr_{\relcurve}^\mathrm{nr}(X,Y)$.
If $Y=\relcurve$ and $g$ equals to the canonical immersion 
we write $(Z,\mu)$ for $(Z,\mu,g)$ 
for simplicity.
\end{definition}

We are going to construct a map $\Fr_{\relcurve}^\mathrm{nr}(X,\relcurve)\times  \lF_m^\Zar(\relcurve)\to \ovZF_*(X,\relcurve)$.
Let $(Z,e)\in \Fr_{\relcurve}^\mathrm{nr}(X,\relcurve)$ and $(i,W,\psi)\in \lF_m^\Zar(\relcurve)$,
for $\relcurve\to S$ smooth of dimension $d$.

Consider the closed subscheme $Z(\calI^2)\subset \relcurve$,
where $\calI=\calI_{\relcurve\times X}(Z)\subset \calO(\relcurve\times X)$.
There is a unique vector of functions 
\[e=(e_1,\dots e_d) \in \calO_{Z(\calI^2)}(Z(\calI^2))^d, e\big|_Z=0, (\mathrm{d\,} e)\big|_Z=\mu,\] 
where $(\mathrm{d\,} e)\big|_Z$ denotes the differential of $e$ on $Z$.
Then $Z(e)=Z$. 

Consider the closed immersion given by the composition $Z\to\relcurve \to W\to \A^m_S$. 
Choose regular functions
\begin{equation}\label{eq:varphi=e}
\phi=(\varphi_1,\dots \varphi_d)\in \calO_W(W)^d, \varphi_i\big|_{Z(\calI^2)}=e_i, i=1,\dots d.
\end{equation} 
Then since $Z(\varphi\big|_{Z(\calI^2)})=Z$, 
it follows that 
$Z(\varphi\big|_{\relcurve})=Z\amalg \hat Z$,
and consequently 
\[Z(\varphi_1,\dots \varphi_d,\psi_1,\dots \psi_{m-d})=Z\amalg \hat Z\]
Thus there is a framed correspondence
\begin{equation}\label{eq:Phi((Z,mu),(i,W,psi))}\Phi=(Z, W, \varphi_1,\dots \varphi_d,\psi_1,\dots \psi_{m-d},g)\in \Fr_m(X,\relcurve),\end{equation}
where $g\colon Z\to \relcurve\times X\to \relcurve$.

\begin{lemma}\label{lm:[mu,psi]}
For any $(Z,\mu)$ and $(i,W,\psi)$ as above,
the class $[\Phi]\in \ovZF_*(X,\relcurve)$ is independent from the choice of $\phi$.
\end{lemma}
\begin{proof}
Let $\varphi,\varphi^\prime\in \calO_{W}(W)$ be two vectors satisfying \eqref{eq:varphi=e}, and 
$\Phi, \Phi^\prime\in \Fr_m(X,\relcurve)$.
Then the vector of functions $(1-\lambda)\varphi+\lambda \varphi^\prime\in \calO_{W\times\A^1}(W\times\A^1)$ there is an $\A^1$-homotopy connecting $\Phi$ and $\Phi^\prime$. Hence $[\Phi]=[\Phi^\prime]$.
\end{proof}

\begin{definition}
For any $c=(Z,\mu)\in \Fr_{\relcurve}^\mathrm{nr}(X,\relcurve)$ and $\Psi=(i,W,\psi)\in lF_m^\Zar(\relcurve)$ as above
denote \[[c,\Psi]=[\Phi]\in \ovZF_*(X,\relcurve)\] given by \eqref{eq:Phi((Z,mu),(i,W,psi))}.
This induces the map
\[[-,-]\colon \Fr_{\relcurve}^\mathrm{nr}(X,\relcurve)\times  \lF_m^\Zar(\relcurve)\to \ovZF_*(X,\relcurve).\]
\end{definition}


\begin{definition}\label{def:innerFrZar}
Let $\relcurve$ be an affine scheme over $S$ of a pure relative dimension $d$. 
Let $X, Y$ be $S$-schemes.
A \textit{$\relcurve$-inner Zariski framed correspondence} from 
$X$ to $Y$ 
over $S$ 
is a set of data $(Z, V, \phi, g)$ where 
\begin{enumerate}
\item  $Z \to \relcurve\times X$ is a closed immersion, $Z$ is finite over $X$, 
\item $V\to \relcurve$ is Zariski open neighbourhood of $Z$,
\item 
$\phi=(\varphi_1,\dots \varphi_d)\in \calO(V)^d$, $Z(\phi)\cong Z$,
\item $g\colon Z\to Y$ is a regular map.
\end{enumerate}
Denote by $\Fr_{\relcurve}^\Zar(X,Y)$ the set of equivalence classes of $(Z,V,\phi,g)$ as above 
up to shrinking of the Zariski neighbourhood $V$ of $Z$.
\end{definition}
Any correspondence
$c=(Z,V,\phi)\in \Fr_{\relcurve}^\Zar(X,\relcurve)$ defines an element 
$(Z,\mu)\in \Fr_{\relcurve}^\mathrm{nr}(X,\relcurve)$, where $\mu$ is given by the differential $\mathrm{d_Z\,}\phi$ of $\phi$ on $Z$ in $\relcurve$.
%

\begin{definition}\label{def:ZF_C-and-lepsilonFr_C}
Under the assumptions of Def. \ref{def:innerFrZar} 
and in a similar way to Def. \ref{def:ZF}, and \ref{def:ovZF},
define 
the presheaves on $\Sm_S$
\[\ZF_\relcurve^\mathrm{Zar}(\relcurve)=\ZF_\relcurve^\mathrm{Zar}(-,\relcurve),\,
\ovZF_\relcurve^\mathrm{Zar}(\relcurve)=\ovZF_\relcurve^\mathrm{Zar}(-,\relcurve)
\]
In a similar way to Def. \ref{def:lepsilonFr_n} we define 
the subpresheaves
$l_\varepsilon\ZF_\relcurve^\mathrm{Zar}(-,\relcurve)$, and
$l_\varepsilon\ovZF_\relcurve^\mathrm{Zar}(-,\relcurve)$,
and the quotient-presheaves
$\ZF_{\relcurve}^\Zar/l_{\varepsilon}(-,\relcurve)$, and
$\ovZF_{\relcurve}^\Zar/l_{\varepsilon}(-,\relcurve)$.
\end{definition}
%
Denote by $\mathbb Z\lF_m^\Zar(\relcurve)$ the free abelian group associated with $\lF_m^\Zar(\relcurve)$.
Then we get linear homomorphisms
\[[-,-]\colon \ovZF_{\relcurve}^\Zar(X,\relcurve)\otimes  \mathbb Z\lF_m^\Zar(\relcurve)\to \ovZF_*(X,\relcurve).\]
\[[-,-]\colon \ovZF_{\relcurve}^\Zar/l_{\varepsilon}(-,\relcurve)\otimes \mathbb Z\lF_m^\Zar(\relcurve)\to \ovZF_*/l_\varepsilon(-,\relcurve).\]

%

\subsection{Framed maps.}

In this subsection we consider the following special type of framed correspondences, which could be called as framed maps.

\begin{definition}
Let $f\colon X\to Y$ be a morphism of $S$-schemes. 
For any $C\in \GL_n(X)$ 
define the framed correspondence of the level $n$
\[
f^C = (0_X, C^*(t_1,t_2, \dots t_n), f\circ \pr_X) \in \Fr_n(X,X)\]
where $C^*(t_1,t_2, \dots t_n)$ denote the inverse image of the coordinate functions along the induced morphism $C^*\colon \A^n_X\to \A^n_X$, 
and $\pr_X\colon (\A^n_X)^h_{0\times X}\to X$ is the projection.

For any morphism $s\colon X\to Y$
define $\sigma_n s^\lambda=\sigma^{n,\lambda}_Y\circ s=s\circ \sigma^{n,\lambda}_X\in \Fr_n(X,Y)$.

\end{definition}

For any short exact sequence
\[N_1\hookrightarrow N\twoheadrightarrow N_2\]
and isomorphisms
\[\mu\colon N_2\simeq \mathbf{1}^{n_1}_S, \nu\colon N_1\simeq \mathbf{1}^{n_2}_S\]
we have an element 
\begin{equation*}
[\mu,nu]\in [N,\mathbf{1}^n)S]_E
\end{equation*}
in the set of classes of isomorphisms $\Iso(N,\mathbf{1}^n)S)$ with respect to the elementary transformations, see Definition \ref{def:[mu,nu]}. 

\begin{lemma}\label{lm:inframingJac}
Let $\Gamma=(Z,\mu,g)\in \Fr^\nr_\relcurve(S,Y)$, $\Psi\in \lF^\Zar_{N}(\relcurve)$.
Then $[\Gamma,\Psi] =[g\circ s]^{[\mu,d\psi]} \in \ovZF_N(S,Y)$. 
\end{lemma}
\begin{proof}

Consider the closed immersions $Z\to \relcurve\to \A^n_X$ given by $\Gamma$ and $\Psi$.
Since $Z$ is affine there is a splitting
\[\Omega_{Z/\A^N_S}\simeq i^*(\Omega_{\relcurve/\A^N_S})\oplus \Omega_{Z/\relcurve}.\]
Hence the trivialisation 
\[\mu\to \mu\colon i^*(\Omega_{\relcurve/\A^N_S})\simeq \mathbf{1}^{d}_Z\] 
lifts to a morphism
\[\mu^\prime\colon \Omega_{Z/\A^N_S}\to \mathbf{1}^d_Z.\]
Let $\phi=(\phi_1,\dots \phi_d)\in \calO_{\A^N_S}(\A^N_S)^d$ be a vector of regular functions such that  
$\mathrm{d}_Z\, \phi = \mu^\prime$.
Then $[\mathrm{d}_Z\, (\phi,\psi)]=[\mu,d\psi]$.
Hence
\[[\Gamma,\Psi] = [(Z,\phi,\psi,g^\prime)]=[g\circ s]^{[\mu,\mathrm{d_Z}\, \psi]}.\]
\end{proof}

\begin{lemma}\label{lm:s0C0s1C1}
Let $i\colon Z\to X$ be a closed immersion that defines an affine henselian pair. 
Let $s_0,s_1\colon X\to Y$ be regular maps;
let $C_0, C_1\in \GL_n(X)$, $i^*(C_0)=i^*(C_1)$.

If $[s_0^{C_0}]=[s_1^{C_1}]\in \ovZF/_{l_\varepsilon}(X,Y)$, then
\[
[s_0]=[s_1]\in \ovZF/_{l_\varepsilon}(S,Y)
.\]
\end{lemma}
\begin{proof}
The claim follows by the sequence 
\[[s_0]=[\sigma^{C_0^{-1}}\circ s_0^{C_0}]=[\sigma^{C_0^{-1}}\circ s_1^{C_1}]=[s_1],\]
where the last equality follows by Corollary \ref{cor:henspairElemHomdiagdetectinvrad} and Lemma \ref{lm:ElementaryMatHomotopy}.\end{proof}

%

\section{Fine compactification}\label{subsect:FineCompact}

\begin{definition}
\label{def:fine_compactification}
Let $S=\Spec R$ be the spectrum of a ring, $\relcurve$ be a normal scheme, and $\relcurve\to S$ be 
a 
morphism of relative dimension $1$. 
We call by a \textit{fine compactification of $\relcurve$ over $S$} a set of data
$(j\colon \relcurve\hookrightarrow\relpcurve,\struct(1), \zeta_\infty, \zeta_c)$ with $\relpcurve$ being a projective scheme over $U$ of (pure) relative dimension one, 
$j$ being an open dense immersion over $U$,
$\struct(1)$ being a very ample line bundle over $\relpcurve$,
and $\zeta_\infty\in \Gamma(\relpcurve,\struct(1))$ and $\zeta_c\in \Gamma(\relpcurve,\struct(1))$ are such that 
\begin{enumerate}
\item
$\relcurve=\relpcurve-Z(\zeta_\infty)$;
\item
$Z(\zeta_\infty)$ is finite over $S$.
\item
$Z(\zeta_c)$ is finite over $S$, $Z(\zeta_c)\cap Z(\zeta_\infty)=\emptyset$.
\end{enumerate}
\end{definition}
\begin{remark}
In difference to \cite[Definition 5.4]{AD-Rigid} 
we require the third additional property of the section $\zeta_c$. 
In the local base case considered in \cite{AD-Rigid} such $\zeta_c$ always exists.
\end{remark}
\begin{lemma}\label{lm:FineCompCahceZetac}
Let $\relcurve$ be a 
scheme of relative dimension one over an affine noetherian 
base $S=\Spec R$,
and $(\ovrelcurve, \calO(1), \zeta_\infty, \zeta_c)$ be a fine compactification.
Let $\Gamma\subset\relcurve $ be a closed subscheme finite over $U$.
Then there is a fine compactification 
$(\ovrelcurve, \calO^\prime(1), \zeta_\infty^\prime, \zeta_c^\prime)$ of $\relcurve$
such that $\zeta_c^\prime\cap \Gamma=\emptyset$.
\end{lemma}
\begin{proof}
By Serre's theorem (see Prop. \ref{prop:CorofSerreTh}) for a large enough $b$ there is a section
$\zeta^\prime_c\in \Gamma(\ovrelcurve, \calO(b))$, 
\[\zeta_c^\prime\big|_{\Gamma}=\zeta_\infty^b, 
\zeta_c^\prime\big|_{Z(\zeta_\infty)}=\zeta_c^b.\]
Then
$(\ovrelcurve, \calO(b), \zeta_\infty^b, \zeta_c^\prime)$
is the required compactification. 
\end{proof}

\begin{lemma}\label{lm:l-epsDivision}
Let $\relcurve$ be a scheme of relative dimension one over a base $S=\Spec R$ with a fine compactification 
$(\ovrelcurve, \calO(1), \zeta_\infty,\zeta_c)$.
Let $c=(Z(\varphi),\relcurve,\varphi)\in \Fr_{\relcurve}(U,\relcurve)$,
and assume that $\varphi=u (r/\zeta_\infty)^l$, $u\in \calO^\times(\relcurve)$, $r\in \Gamma(\ovrelcurve, \calO(1))$.
Then $[c]\in l_\varepsilon \ZF_{\relcurve}(U,\relcurve)$.
\end{lemma}
\begin{proof}
Consider the sequence of equalities in $\ovZF_{\relcurve}(S,\relcurve)$
\begin{multline*}
[(Z(r^l),\relcurve, ur^l)]\stackrel{h}{\sim} 
[(Z(r^{l-1}(r+\zeta_\infty)),\relcurve, u r^{l-1}(r+\zeta_\infty)/\zeta_\infty^{l} )] =  \\
[(Z(r^{l-1}),\relcurve, u r^{l-1}/\zeta_\infty^{l-1})] + [(Z(r+\zeta_\infty),\relcurve, (-1)^{l-1}u(r+\zeta_\infty)/\zeta_\infty )]
\stackrel{g}{\sim} \\
[(Z(r^{l-1}),\relcurve, u r^{l-1}\zeta_\infty^{l-1} )] + [(Z(r),\relcurve, (-1)^{l-1}ur/\zeta_\infty)] = \\
[(Z(r^{l-1}),\relcurve, u r^{l-1}/\zeta_\infty^{l-1})] + (-1)^{l-1}[(Z(r),\relcurve, ur/\zeta_\infty)],
\end{multline*}
where the homotopies $h$ and $g$ are given by the correspondences
$$\begin{array}{ll}
h = &(Z(r^{l-1}(r+\lambda+\zeta_\infty)), \relcurve, u r^{l-1}(r+\lambda\zeta_\infty)/\zeta_\infty^{l} ), \\
g=& (Z(r+\lambda\zeta_\infty), \relcurve, (-1)^{l-1}u (r+\lambda\zeta_\infty)/\zeta_\infty ) .
\end{array}$$
The fact that the supports $Z(r^{l-1}(r+\lambda+\zeta_\infty))$ and $Z(r+\lambda\zeta_\infty)$ are finite over $S\times\A^1$
follows from the fact that $Z(r)\cap Z(\zeta_\infty)=\emptyset$.

Then by induction we get 
\begin{multline*}
c=[(Z(r^l),\relcurve, ur^l/\zeta_\infty^{-1})] = \sum\limits_{i=1}^l (-1)^{i-1}[(Z(r),\relcurve, ur/\zeta_\infty)]=\\
l_\varepsilon [(Z(r),\relcurve, ur/\zeta_\infty)] \in \ovZF_{\relcurve}(S,\relcurve).\end{multline*}
\end{proof}

\section{Framed $\A^1$-homotopy in relative curve.}\label{sect:SectRelCurve} 

Given an affine henselian pair $i\colon Z\hookrightarrow U$, 
and a flat morphism of pure dimension one $\relcurve\to U$ admitting a fine compactification (Def. \ref{def:fine_compactification}), and Zariski normal framing (Def. \ref{def:Zar_normal_framing}).
In this section we prove 
that 
framed maps from $U$ to $\relcurve$
that are equal over $Z$ define the same class in 
$\ovZF(U,\relcurve)$.
The main result is Theorem \ref{th:Curve}.

We start with the case of $\relcurve$-inner-correspondences $\ovZF_{\relcurve}(U,\relcurve)$
in sense of Definition \ref{def:innerFrnr}
for a flat morphism of pure dimension one $\relcurve\to U$ that admits a fine compactification.
Let $\gamma_0,\gamma_1\colon U\to \relcurve$ be a pair of $U$-maps 
such that 
$\gamma_0\big|_Z=\gamma_1\big|_Z$, 
and
$\relcurve\to U$ is smooth over 
the subscheme $S=\gamma_0(Z)=\gamma_1(Z)$ in $\relcurve$.
Denote 
the closed subschemes $S_i=\gamma_i(Z)$, and 
ideals $I_i=I(S_i)\subset \calO_{\relcurve}(\relcurve)$, for $i=0,1$.
Let
\begin{equation}\label{eq:muidualN(Si)O(Si)}
\mu_i\colon \calO_{S_i}(S_i)\simeq I_i/I_i^2, i=0,1.
\end{equation}
and suppose \[i^*(\mu_0)=i^*(\mu_1)=\mu,\] 
where $i^*(-)$ denotes the base change. 

\begin{lemma}\label{lm:HomotFunct}
%
%
Under the notation above for 
any $l\in \mathbb Z$ invertible on the scheme $U$
the is the equality
\[[\gamma_0]^{\mu_0}=[\gamma_0]^{\mu_0}\in \ZF_\relcurve^\nr/l_\varepsilon(U,\relcurve).\]
\end{lemma}
\begin{proof}
Denote $C=\relcurve\times_U Z$, $\ovC=\ovrelcurve\times_U Z$.
To prove the claim we construct a $\relcurve$-inner Zariski framed correspondence
$h=(Z,\relcurve\times\A^1,\varphi)\in \ZF^\mathrm{Zar}_{\relcurve}(U\times\A^1,\relcurve)$
such that
\[[h\circ i_0]=[\gamma_0]^{\mu_0}, [h\circ i_1]=[\gamma_1]^{\mu_1}\]
in $\ZF_\relcurve^\Zar/l_\varepsilon(U,\relcurve)$.
%
The function
$\varphi\in \calO_{\relcurve\times\A^1}(\relcurve\times\A^1)$
is such that 
\begin{itemize}
\item[(1)] 
$Z(\varphi)$ is finite over $U\times\A^1$,
\item[(2)] 
the restriction $\varphi\big|_{S_0\times \{0\}\cup S_1\times \{1\}}$ vanishes, and
\[\begin{array}{lll}
\varphi_0\big|_{Z(I^2(S_0))}&=\mu_0(1)& \in \calO_{\relcurve}(\relcurve)/I(S_0)^2\\
\varphi_1\big|_{Z(I^2(S_1))}&=\mu_1(1)& \in \calO_{\relcurve}(\relcurve)/I(S_1)^2
,\end{array}\]
where \[\varphi_0=\varphi\big|_{\relcurve\times 0}, \varphi_1=\varphi\big|_{\relcurve\times 1};\]
\item[(3)] 
the framed correspondences
\[
[(\relcurve -S_0, Z(\varphi_0)-S_0, \varphi_0)], 
[(\relcurve -S_1, Z(\varphi_1)-S_1, \varphi_1)]
\] are in 
$l_\varepsilon \ovZF^\mathrm{Zar}_{\relcurve}(U,\relcurve)$.
\end{itemize}

The required function $\varphi$ is given by
\[\varphi=(\tau_0(s_0 r_0^l)(1-\lambda)+\tau_1(s_1 r_1^l)\lambda)/d ,\]
where $\tau_0$, $\tau_1$, $s_0$, $s_1$, $r_0$, $r_1$, and $d$ are constructed in Lemmas \ref{lm:LR0LR1}, \ref{lm:Dsect}, \ref{lm:rzrz} that follow in the text.
Then points (1) and (2) above follows immediate from the properties claimed in the mentioned lemmas.
Point (3) in the list above 
follows by Lemma \ref{lm:l-epsDivision} applied 
to the curves $\relcurve-(D\cup S_i)$, $i=0,1$, 
equipped with the fine compactifications $(\ovrelcurve, \calO(k), z_i, \zeta_c^k)$,
and the correspondences
\[[(\relcurve -S_i, Z(\varphi_i)-S_i, \varphi_i)]=[(\relcurve -S_i, Z(\varphi_i)-S_i, (r_i/z_i)^l (s_i/(z_i \cdot d)))]\]
in $\ZF_{\relcurve}(U,\relcurve)$.
\end{proof}
To complete the proof we proceed with used Lemmas \ref{lm:LR0LR1}, \ref{lm:Dsect}, \ref{lm:rzrz}. 
Denote 
\begin{gather*}
W_0=Z(I^2(S_0))\subset \relcurve, W_1=Z(I^2(S_1))\subset \relcurve, \\
W=W_0\times_U Z=W_1\times_U Z\subset C.
\end{gather*}

\begin{lemma}\label{lm:LR0LR1}
There are line bundles 
$\calL_0, \calL_1,\calL, \calR_0, \calR_1$ on $\ovrelcurve$
with sections 
\begin{gather}
\label{eq:s0L0S0}
s_0\in\Gamma(\ovrelcurve, \calL_0), Z(s_0)=S_0,\\
\label{eq:s1L1S1}
s_1\in\Gamma(\ovrelcurve, \calL_1), Z(s_1)=S_1,
\end{gather}
and 
$\varepsilon \in \Gamma( W, \calL_0^\times )$,
and isomorphisms
\begin{gather}\label{eq:LRLR}
\tau_0\colon \calL_0\otimes\calR_0^l\simeq \calL,
\tau_1\colon \calL_1\otimes\calR_1^l\simeq\calL,\\
\label{eq:xinu}
\xi\colon i^*(\calL_0)\simeq i^*(\calL_1), \nu_0\colon i^*(\calR_0)\simeq \calO(\ovC), \nu_1\colon i^*(\calR_1)\simeq \calO(\ovC)
\end{gather}
that are agreed on the fibre over $Z$, and such that
\begin{gather}
\label{eq:s0s1C}\xi(i^*(s_0))=i^*(s_1),\\
\label{eq:ssTT}
i^*(s_0\big|_{W_0})/\varepsilon = i^*(s_1\big|_{W_1})/\xi(\varepsilon) = i^*(\mu_0(1))=i^*(\mu_1(1)).
\end{gather}
\end{lemma}
\begin{proof}
Define $\calL_0=\calL(S_0)$, $\calL_1=\calL(S_1)$.
Since $S_0\times_U Z=S_1\times_U Z$, \[i^*(\calL_0)\simeq i^*(\calL_1).\]
So $i^*(\calL_0\otimes\calL_1^{-1})\simeq\calO(C)$, and by Proposition \ref{prop:lrootLineBun}
there is a line bundle $\calR$ on $\relcurve$ such that \[\calR^l\simeq \calL_0\otimes\calL_1^{-1}, i^*(\calR)\simeq\calO(C).\]
Put $\calR_0=\calO(\relcurve)$, $\calR_1=\calR$, $\calL=\calL_0\otimes\calR^l$. 
Then isomorphisms $\tau_0$ and $\tau_1$ in \eqref{eq:LRLR}, and $\nu_0$, and $\nu_1$ in \eqref{eq:xinu} holds by construction.
Now we set \[\xi=(\tau_1^{-1}\circ\tau_0)\otimes\nu_0^{\otimes -1}\otimes(\nu_1^{-1})^{\otimes -1}.\] 

By construction there are some $s_0\in \Gamma(\relcurve, \calL_0)$, $Z(s_0)=S_0$, $s_1\in \Gamma(\relcurve, \calL_1)$, $Z(s_1)=S_1$. Since $S_0\times_U Z=S_1\times_U Z$, the quotient section $\xi(i^*(s_0))/i^*(s_1)\in \Gamma(\ovC,\calO)$ is invertible. Then \[\xi(i^*(s_0))/i^*(s_1)=p_Z^*(\lambda)\] for some $\lambda\in \Gamma(Z,\calO^\times)$,
since the morphism $p_Z\colon \ovC\to Z$ is projective. Choose any lift $\widetilde\lambda\in \Gamma(U,\calO^\times)$, and redenote $s_1\colon=\widetilde\lambda s_1$. Then we get \eqref{eq:s0s1C}.

Since $Z(s_0)=S_0=Z(T_0)$, and $Z(s_1)=S_1=Z(T_1)$ the quotient sections $i^*(s_0\big|_{W_0})/i^*(T_0)$ and $i^*(s_1\big|_{W_1})/i^*(T_1)$ are invertible on $W_0$ and $W_1$ respectively. Equalities \eqref{eq:ssTT} follows, since $i^*(T_0)=i^*(T_1)$ and because of \eqref{eq:s0s1C}.
\end{proof}

\begin{lemma}\label{lm:Dsect}
Under the assumptions of Lemma \ref{lm:HomotFunct} and the claim of Lemma \ref{lm:LR0LR1}
there are 
a fine compactification $(\ovrelcurve,\calO(1), \zeta_\infty, \zeta_c)$ of $\relcurve$,
a reduced closed subscheme $D\subset \ovrelcurve$ finite and surjective over $U$,
and an integers $K_d\in \mathbb Z$, for all $b\in \mathbb Z$, such that 
\[
D\cap S_0=D\cap S_1=D\cap Z(\zeta_c)=\emptyset,
D\supset Z(\zeta_\infty)_{red},
\]
and for any $k>{K}_d$, 
$\exists d\in \Gamma(\ovrelcurve,\calL(lbk))$ such that 
\[
Z(d)_{red}=D,
i^*(d)\big|_{W}=\tau_0(\varepsilon \cdot \nu_0^l(1)) \cdot \zeta_\infty^{lbk}.
\]
\end{lemma}
\begin{proof}
By \eqref{eq:s0L0S0} 
$\calL_0\simeq \calL(S_0)$, 
and by \eqref{eq:muidualN(Si)O(Si)}
the normal bundle $N_{S_0/\relcurve}$ is trivial.
Hence $\calL_0\big|_{S_0}$ is trivial.
Equalities \eqref{eq:xinu} imply that $\calR_0\big|_{W}$ is trivial. 
Thus $\calL\big|_{W}$ is trivial. 
Let $u\in \Gamma(W,\calL^\times)$ be an invertible section.
%

By assumption 
$\relcurve$ admits a fine compactification,
and because of Lemma \ref{lm:FineCompCahceZetac} we can assume that 
$Z(\zeta_c)\cap (S_0\cup S_1)=\emptyset$.
%
Then since $\calO(1)$ is ample, 
by Serre's theorem on ample bundles, see Prop. \ref{prop:CorofSerreTh}, 
for a large enough $b\in \mathbb Z$ there is a section 
\[
d^+\in \Gamma(\ovrelcurve,\calO(lb)), 
d^+\big|_{W}=\zeta_\infty^{lb}, 
d^+\big|_{Z(\zeta_c)}=\zeta_\infty^{lb},
\]
and
for a large enough $K_0\in \mathbb Z$ there is a section 
\[
d_0\in \Gamma(\ovrelcurve,\calL(lbK_0)), 
d_0\big|_{Z(\zeta_\infty)}=0, d_0\big|_{W}=u\zeta_\infty^{lbK_0}, 
d_0\big|_{Z(\zeta_c)}=s_0\cdot \nu_0(1)^l\cdot \zeta_\infty^{lb}.
\]

Set $D=Z(d_0 \cdot d^+)_{red}$. 
For all $k>{K_0}+1$, we set $d=d_0\cdot (d^+)^{k-K_0}$.
\end{proof}

\begin{lemma}\label{lm:rzrz}
Assume the claim of Lemmas \ref{lm:LR0LR1} and \ref{lm:Dsect}.
Then there is an integer ${K}_r$ such that
$\forall k>{K}_r$
$\exists r_0,z_0\in \Gamma(\ovrelcurve,\calR_0(k)),r_1,z_1\in \Gamma(\ovrelcurve,\calR_1(k))$
\begin{gather*}
\begin{array}{ll}
z_0\big|_{S_0\cup D}=0, & z_1\big|_{S_1\cup D}=0, \\
r_0^l\big|_{W}=\nu_0(1)\zeta_\infty^k, & r_1^l\big|_{W}=\nu_1(1)\zeta_\infty^k, \\
Z(r_0)\cap D=\emptyset, & Z(r_1)\cap D=\emptyset,\\
Z(r_0)\cap Z(z_0)=\emptyset, & Z(r_1)\cap Z(z_1)=\emptyset,
\end{array}
\\
(s_0r_0^l)\big|_D=(s_1r_1^l)\big|_D.
\end{gather*}
\end{lemma}
\begin{proof}
By Serre's theorem (see Prop. \ref{prop:CorofSerreTh}) there is $K_0$ such that for all $k>K_0$ there are section
\[\begin{array}{ll}
r_0\in \Gamma(\ovrelcurve,\calR_0(k)), & r_0\big|_{W}=\nu_0(1)\zeta_\infty^k, r_0\big|_{D}=\nu_0(1)\zeta_c^k\\
\end{array}\]
where $\zeta_\infty$ and $\zeta_c$ are given by the fine compactification provided by the claim of Lemma \ref{lm:Dsect}.
Then by Serre's theorem, Prop. \ref{prop:CorofSerreTh}, again there is ${K}_r$ such that for all $k>K_r$ there are sections $r_0$ and $r_1$ as above and sections
$$\begin{array}{ll}
r_1\in \Gamma(\ovrelcurve,\calR_1(k)), & r_1\big|_{W}=\nu_1(1)\zeta_\infty^k, r_1\big|_{D}=(s_0\big|_D/s_1\big|_D)\cdot r_0\big|_D ,\\
z_0\in \Gamma(\ovrelcurve,\calR(k)), & z_0\big|_{D\cup S_0}=0 , z_0\big|_{Z(r_0)}=\nu_0(1)\zeta_\infty^k\\
z_1\in \Gamma(\ovrelcurve,\calR(k)), & z_1\big|_{D\cup S_1}=0 , z_1\big|_{Z(r_1)}=\nu_1(1)\zeta_\infty^k .
\end{array}$$
So the claim follows. 
\end{proof}

This finishes the proof of Lemma \ref{lm:HomotFunct} on $\relcurve$-inner framed correspondence.
Now we deduce the result used in the next section.
\begin{theorem}\label{th:Curve}
Let $i\colon Z\hookrightarrow U$ be an affine henselian pair.
Let $\mathcal C\to U$ be a flat morphism of pure dimension one 
admitting a level $n$ Zariski normal framing (Def. \ref{def:Zar_normal_framing}) 
and a fine compactification (Def. \ref{def:fine_compactification}).
Let $\gamma_0,\gamma_1\colon U\to \mathcal C$ be morphisms of $U$-schemes such that 
$i^*(\gamma_0)=i^*(\gamma_1)$ 
and such that the morphism $\mathcal C\to U$ is smooth over $\gamma_0(Z)=\gamma_1(Z)$.
Let $e\colon \mathcal C\to Y$ be a morphism of $U$-schemes with $Y$ being smooth affine over $U$.

Then for every $l\in \mathbb{Z}_{>0}$ such that $l\in \calO_U(U)^\times$ 
\[
\sigma^{n}[e\circ \gamma_0]=\sigma^{n}[e\circ \gamma_1]\in\ovZF_{n}/l_\varepsilon(U,Y).
\]
\end{theorem}
\begin{proof}

Denote 
$S_0=\gamma_0(U)$, $S_1=\gamma_1(U)$. 
Denote 
$C=\relcurve\times_U Z$, and 
$S=S_0\times_U Z=S_1\times_U Z\subset C$. 

Let $\Psi=(\mathcal V,\psi_1,\dots \psi_{n-1},g)$ be a normal framing of $\relcurve$. 
By assumption $\relcurve\to U$ is smooth over the subscheme $S$. 
Then the differentials $\mathrm{d\,}\phi_i$ define the trivialisation of the conormal bundle $\Omega_{\relcurve/\A^n_U}$. 
Since the relative tangent bundle of $\A^n_U$ over $U$ is trivial, and 
$\relcurve$ is one-dimensional subscheme,
it follows that the tangent bundle $T_{\relcurve}$ is trivial on $S$. 
Consequently, $T_{\relcurve/U}$ is trivial on
$S_0=\gamma_0(U)$ and $S_1=\gamma_1(U)$.

Choose a trivialisation $\mu\colon T_{\relcurve/U}\big|_{S}\simeq \mathbf 1_S$, 
and lifts
\[\mu_1\colon T_{\relcurve/U}\big|_{S_1}\simeq \mathbf 1_{S_1}, \mu_0\colon T_{\relcurve/U}\big|_{S_0}\simeq \mathbf 1_{S_0}.\]

Now it follows by 
Lemma \ref{lm:HomotFunct} that 
\[[\gamma_0]^{\mu_0}=[\gamma_1]^{\mu_1}\in \ZF_{\relcurve}/l_\varepsilon(U,\relcurve).\]
Hence by 
Lemma \ref{lm:inframingJac} 
\[[\gamma_0]^{[\mathrm{d\,}\psi,\mu_0]}=[\gamma_1]^{[\mathrm{d\,}\psi,\mu_1]}\in \ZF_n/l_\varepsilon(U,\relcurve).\]
where $\psi=(\psi_1,\dots \psi_{n-1})$.
Thus
by Lemma \ref{lm:s0C0s1C1}
\[\sigma^n[\gamma_0]=\sigma_n[\gamma_1]\in \ZF_n/l_\varepsilon(U,Y),\]
since $i^*(\mu_0)=\mu=i^*(\mu_1)$.
\end{proof}

One can see that assumptions in the above theorem are stronger then in 
Lemma \ref{lm:HomotFunct},
since we ask in addition 
a Zariski normal framing for $\relcurve$ over $U$.
Actually, by the following remark, this extra assumption is not restrictive.
At the same time in the next section we apply the theorem to the relative curve given with the normal framing.
\begin{remark}
Let $\relcurve$ be an affine $U$-scheme of a pure dimension one that admits a fine compactification 
over an affine scheme $U$.
For any closed subschemes $S_i\subset \relcurve$, $S_i\simeq S$, $i=0,1$, 
such that
the normal bundles $N_{S_i/\relcurve}$ are trivial,
there is a Zariski neighbourhood $\relcurve^\prime\supset S_0\cup S_1$ that has 
a fine compactification and a normal framing over $S$ in the same time.
\end{remark}

\section{The rigidity theorem (the main result).}\label{sect:ContractingDeduct}

In the section we prove 
rigidity Theorem \ref{th:introduction:Rigidity:framed} (see Theorem \ref{th:Rigidity})
using the result of Section \ref{sect:SectRelCurve}.
This 
is the main novel content and the central part of the article.

Throughout the section we work
with given 
$l_\varepsilon$-torsion homotopy invariant $\sigma$-stable linear framed presheaf 
$F\colon Sm_k\to \Ab^\op$ 
for some $l\in \mathbb Z$, $l\in k^\times$, and a base field $k$.
Namely, we prove the isomorphism 
\[F(U)\simeq F(Z)\]
for an affine smooth henselian pair $Z\hookrightarrow U$ over $k$.


\subsection{The case $T_Z\simeq \mathbf{1}^d_Z$, $N_{Z/U}\simeq \mathbf 1_Z$.}

\begin{lemma}\label{lm:StabNtriv}
Given a closed immersion $i\colon X\to Y$ in $\SmAff_S$ over an affine base scheme $S$.
Assume that schemes $X$ and $Y$ are affine.
Assume that $T_{X/S}\simeq \mathbf 1^n_X$, and $T_{Y/S}\simeq \mathbf 1^m_Y$.
%
Then the normal bundle $N_{X/Y}$ is stably trivial.
\end{lemma}
\begin{proof}
The claim follows, since 
\[N_{X/Y}\oplus \mathbf 1_X^n\simeq N_{X/Y}\oplus T_{X/S}
\stackrel{Lm \ref{lm:TTNsplitting}}{\simeq} i^*(T_{Y/S})=\mathbf 1^m_X.\]
\end{proof}

Let $i\colon Z\hookrightarrow X\in \Sm_k$ be a closed immersion of smooth affine schemes, 
and $r\colon X\to Z$ be a smooth retraction, i.e. $r$ is a smooth morphism such that $r\circ i=\id_{Z}$.
Suppose that $X$ is of pure dimension $d$ over $k$, $\codim_X Z=1$; 
suppose that the tangent bundle $T_Z$, 
and the (relative) tangent bundle $T_{X/Z}$ (with respect to $r$) are trivial. 
In particular, it follows that the tangent bundle $T_X$ and the normal bundle $N_{Z/X}$ are trivial.
Denote by \[\mathrm{can}\colon U=X^h_Z\to X\] the canonical morphism.

\begin{lemma}\label{lm:dimd}
Suppose $i\colon Z\hookrightarrow X$ is a closed immersion of smooth affine schemes over a field $k$.
Let $U=X^h_Z$. 
Then there is \begin{itemize}
\item[(0)]
a smooth $U$-scheme $\calX$ of relative dimension $d$, and 
a smooth closed subscheme $\calE$ of relative dimension one over $U$,
such that the normal bundle $N_{\calE/\calX}$ is trivial;
\item[(1)] a Zariski normal framing of $\calX$ over $U$; 
\item[(2)]
a closed immersion 
$\Delta_Z\hookrightarrow \calE_Z=\calE\times_U Z$ such that
the normal bundle $N_{\Delta_Z/\calE_Z}$ is trivial;
\item[(3)]
$U$-morphisms $\gamma,\delta\colon U\to \calE$, and a $k$-morphism $g\colon \calX\to X$
such that 
\[\gamma(Z)=\delta(Z)=\Delta_Z,\] and
\[g\big|_{\calE}\circ r_0=\mathrm{can}\colon U\to X, g\big|_{\calE}\circ r_1=\mathrm{can}\circ r;\]
\item[(4)]
a projective $U$-scheme $\ovcalX$, and 
an open immersion $\calX\to \ovcalX$ such that
\[\calX_\infty= \ovcalX-\calX = X_\infty\times U\] for some (projective) $k$-scheme $X_\infty$ of dimension $d-1$.
\end{itemize}\end{lemma}
\begin{proof}
Since $X$ is affine over $k$, 
there is a closed immersion $j\colon X\hookrightarrow \A^N_k$ for some $N\in \mathbb Z$.
Moreover, since the tangent bundle $T_X$ is trivial, it follows that $N_{X/\A^N_k}$ is stably trivial, 
$N_{X/\A^N_k}\oplus \mathbf{1}^{N^\prime-N}_X\cong \mathbf{1}^{N^\prime-d}_X$, for $N^\prime=\bbZ$.
Consider the immersion given by the composite morphism 
\[X\to \A^N_k=\A^N_k\times 0\hookrightarrow \A^{N^\prime}_k,\] then $N_{X/\A^{N^\prime}_k}\cong \mathbf{1}^{N^\prime-d}_X$.
Redenote $N^\prime$ by $N$, then 
\begin{equation}\label{eq:NX/ANktriv}N_{X/\A^N_k}\cong \mathbf{1}^{N-d}_X.\end{equation}
Then there are regular functions $\varphi_i\in \calO_{\A^N_k}(\A^N_k)$, $i=1,\dots N-d$, such that
the differentials of $\varphi_i$ on $X$ induces the isomorphism \eqref{eq:NX/ANktriv}.
Hence $Z(\varphi_1,\dots, \varphi_{N-d})=X\amalg \hat X$.
So 
\begin{equation}\label{eq:(X-ANhatX-varphi)}(j, \A^N_k-\hat X, \varphi_1,\dots \varphi_{N-d})\end{equation} is a Zariski normal framing of $X$.

Let $\ovX\hookrightarrow \PP^N_k$ be the closed immersion given by the closure of $X$ in $\PP^N_k$.
Then $\dim X_\infty = \dim X-1=d-1$, where $X_\infty=\ovX\setminus X$.
Define \[\calX = X\times U, \ovcalX=\ovX\times U.\] Then \[\calX_\infty= X_\infty\times U,\]
and the condition of point (4) is satisfied.
Set \[\calE=X\times_Z U\subset \calX.\]
Since $X$ is $k$-smooth, and the morphism $r$ is smooth, $\calX,\calE\in \SmAff_U$, $\dim_U\calX=\dim_k X=d$, $\dim_U \calE = \dim r=1$.

Let $\Gamma,\Delta\subset \calE$
denote the graphs of morphisms 
$i\circ r,\can\colon U\to X$,
and $\gamma, \delta\colon U\to \calE$ be the corresponding morphism of $U$-schemes.
Then point (3) is done.
Denote by $\Delta_Z=\Delta\times_U Z\subset \calX\times_U Z$, then $\Delta_Z$ is the graph of the immersion $Z\to X$. 

Since the relative tangent bundle of the morphism $r\colon X\to Z$ is trivial, 
it follows that 
the relative tangent bundle of $\calE$ over $U$ is trivial.
Consequently, the normal bundle $N_{Z/X}$ is trivial,
and the normal bundle $N_{\Delta_Z/\calE}$ is trivial.
Point (2) is proven.

The base change of \eqref{eq:(X-ANhatX-varphi)}, and the canonical morphism $g\colon \calX\to X$
give the Zariski normal framing of $\calX$ over $U$. So point (1) is proven.

To finish the proof of point (0) we need to show that $N_{\calE/\calX}$ is trivial.
Consider the normal bundle $N_{Z/Z\times Z}$ with respect to the diagonal immersion $Z\hookrightarrow Z\times Z$. 
Since $\calE=X\times_Z U$, $\calX=X\times U$, it follow that 
the normal bundle $N_{\calE/\calX}$ equals 
the inverse image of $N_{Z/Z\times Z}$ with respect to the horizontal morphisms 
\[\xymatrix{
X\times U\ar[r]& Z\times Z\\
X\times_Z U \ar[u]\ar[r]& Z\ar[u].
}\] 
At the same time $N_{Z/Z\times Z}\cong T_Z$ is trivial by assumption.
Hence $N_{\calE/\calX}$ is trivial.

In addition let us note that $N_{\calX/\A^{N}_U}$ equals $N_{X/\A^N_k}$, hence it is trivial.
\end{proof}

\begin{lemma}\label{lm:r-RelcurveXhZ}
Suppose $i\colon Z\hookrightarrow X$ is a closed immersion of smooth affine schemes over a field $k$.
Let $U=X^h_Z$. 
Then there is \begin{itemize}
\item[(0)]
a framed $U$-scheme $\calC$ of relative dimension one; 
\item[(1)]
the closed subscheme $\Delta_Z\subset \calX_Z=\calX\times_U Z$ such that
the normal bundle $N_{\gamma_0(Z)/\calX_Z}$ is trivial;
\item[(2)]
$U$-morphisms $\gamma,\delta\colon U\to \calX$, and a $k$-morphism $g\colon \calX\to X$
such that 
$\gamma(Z)=\delta(Z)=\Delta_Z$, and
\begin{equation*}\label{eq:Conedimgammadeltacan}
g\circ \delta=\mathrm{can}\colon U\to X, g\circ \gamma=\mathrm{can}\circ r;    
\end{equation*}
\item[(3)]
a fine compactification of $\calC$ over $U$. 
\end{itemize}\end{lemma}
\begin{proof}
Firstly, applying Lemma \ref{lm:dimd} 
we get 
the framed $U$-scheme $\calX$ with the compactification $\ovcalX$, 
morphisms $\gamma,\delta\colon U\to \calX$,
such that \[\gamma(Z) = \delta(Z) =\Delta_Z\subset\calX\times_U Z,\] 
and closed subscheme $\calE\subset \calX$ that contains 
$\gamma(U)$ and $\delta(U)$ and such that 
\begin{equation}\label{eq:NDeltaZcalEZcalEcalX}
N_{\calE/\calX}\big|_{\Delta_Z}\cong \mathbf{1}^{d-1}_{\Delta_Z}, 
N_{\Delta_Z/\calE_Z}\cong \mathbf{1}_{\Delta_Z},\end{equation}
where $\calE_Z=\calE\times_U Z$.

By part (4) of Lemma \ref{lm:dimd} the scheme $X_\infty$ is a projective scheme of dimension $d-1$ over $k$.
Using Serre's theorem on ample bundles (see Prop. \ref{prop:CorofSerreTh}) 
for a large enough $b\in \mathbb Z$
we find sections $w_2\dots w_{d}\in \Gamma(X_\infty,\mathcal O(b) )$
such that $Z(w_2,\dots, w_{d})$ is finite (over $k$).
Denote \[C_\infty=Z(w_2,\dots ,w_d).\]

Denote by $I=I(\Delta_Z)\subset \calO_{X_Z}(X_Z)$ the vanishing ideal of the subscheme $\Delta_Z$,
and consider the closed subscheme $Z(I^2)\subset X_Z$. 
Since $\Delta_Z$ is finite over $Z$, $Z(I^2)$ is of such type too.
It follows by the first isomorphism in \eqref{eq:NDeltaZcalEZcalEcalX} that
there are regular functions
$y_i\in \calO_{Z(I^2)}(Z(I^2))$, $i=1,\dots ,d-1$,
such that $Z(y_1,\dots ,y_{d-1})=\calE\times_{\PP^N_U} Z(I^2)$.


By the Serre theorem, Prop. \ref{prop:CorofSerreTh}, again for some $l\in \mathbb Z$ 
there are sections
\begin{equation}\label{eq:s2mConditions}
s_2\dots s_{d}\in \Gamma(\calX,\mathcal O(lb) ),
s_i\big|_{\calX_\infty}=w_i^l, (s_i t_\infty^{-lb})\big|_{R}=y_i, s_i\big|_{\Gamma\cup \Delta_U}=0.
\end{equation}
where 
$\Gamma=\gamma(U)$, $\Delta_U=\delta(U)$.

Set
\[
\ovrelcurve=Z(s_2\dots s_{d})\cap \overline{\mathcal X}\subset \PP^N_U, %
\relcurve_\infty=\ovrelcurve\cap \calX_\infty, 
\relcurve=\ovrelcurve-\relcurve_\infty.
\]
Then by \eqref{eq:s2mConditions}
$\ovrelcurve\cap X_\infty=C_\infty\times U$ is finite over $U$,
and $\ovrelcurve$ is a projective variety over $U$ of the pure relative dimension one.
Applying Serre's theorem, Prop. \ref{prop:CorofSerreTh}, also one time we find $\zeta_c\in \Gamma(X_\infty,\mathcal O(d))$, for some $d\in \mathbb Z$, 
such that $\zeta_c$ is invertible on $C_\infty$.
Then
\[(\relcurve\hookrightarrow \ovrelcurve, \calO(d), \zeta_c, x_\infty^d)\] 
is a fine compactification of $\relcurve$ over $U$

Since $X$ is a framed $U$-scheme there is a canonical closed immersion
$X\hookrightarrow \A^N_U$, and $X$ is a clopen subscheme in the vanishing locus 
$Z(\psi_{d+1}, \dots \psi_N)$ for a set of regular functions $\psi_i$ on $\A^N_U$.
So $Z(\psi_{d+1}, \dots \psi_N)=X\amalg \hat X$.
Consider the functions $s_i/t_\infty^{bl}$ on $\calX$, and
let $\psi_2,\dots, \psi_d\in \calO_{\A^N_U}(\A^N_U)$ be lifts of $s_i/t_\infty^{bl}$.
Then 
\[\relcurve = Z(\psi_2,\dots, \psi_{N})-\hat{\calX}\subset \A^N_U.\]
Thus we have got 
the level $N$ framing 
of the affine curve $\relcurve$ of dimension one over $U$. 

It follows because of equalities \eqref{eq:s2mConditions} that the morphism $\relcurve\times_U Z \to Z$ is smooth over the subscheme $\Delta_Z$. Consequently, the morphism $\relcurve\to U$ is smooth over the subschemes $\Delta_U$ and $\Gamma$ in $\relcurve$.
\end{proof}

\begin{proposition}\label{prop:RigidityCodone}
Suppose $i\colon Z\hookrightarrow U$ is an affine smooth henselian pair over a field $k$, 
$\dim_k U=n$, $\codim_U Z=1$,
$T_Z$, and $N_{Z/U}$ are trivial.

Then 
the inverse image homomorphism induces the isomorphism $i^*\colon F(U)\simeq F(Z)$
for any $F$ as above.
\end{proposition}
\begin{proof}
Let $U=X^h_Z$ for a smooth affine scheme $X$ and a closed immersion $Z\hookrightarrow X$. 
It follows from Lemma \ref{lm:smoothretraction} that 
there is an \'etale neighbourhood $\widetilde X\in \SmAff_k$ with a closed immersion 
$\widetilde i\colon Z\to \widetilde X$ 
and a smooth morphism 
$\widetilde r\colon \widetilde X\to Z$
such that $U={\widetilde X}^h_Z$, and $\widetilde r\circ \widetilde i=\mathrm{id}_Z$.
Without loss of generality we can assume that $\widetilde X=X$.
Denote $\can\colon U\to X$, \[r= \widetilde r\circ \can\colon U\to Z, i=\can\circ i\colon Z\to U.\]

By 
Lemma \ref{lm:r-RelcurveXhZ} 
we get a framed $U$-scheme $\relcurve$ of relative dimension one, 
with a fine compactification, 
and morphisms
$\gamma,\delta\colon U\to \relcurve$, $g\colon \relcurve\to X$,
such that $g\circ \gamma=\widetilde i\circ r$, $g\circ \delta=\can$.
By Theorem \ref{th:Curve} it follows that for any $\sigma$-stable framed radditive presheaf $F$ 
the homomorphisms \[{can}^*, (\widetilde i\circ r)^*\colon F(X)\to F(U)\] are equal.
Since
$\widetilde i\circ r = \can \circ (i\circ r)$,
and since $U$ equals to the projectivel limit of all possible $X$ as above,
it follows that 
the homomorphisms \[{\id_U}^*, (i\circ r)^*\colon F(U)\to F(U)\] are equal.
\end{proof}


\subsection{General case}\label{subsect:codimonetrivnortangbunddlesreduct}





\begin{lemma}\label{lm:Thereiscovering}
Given a smooth affine henselian pair $Z\hookrightarrow U$ 
such that $Z$ and $U$ are schemes of pure dimension over a field $k$. 
Then there is a diagram 
\begin{equation}\label{diag:IdemSq}\xymatrix{
Z^\prime\ar[r]^{i^\prime}\ar@<0.5ex>[d]^{p_Z} &X^\prime\ar@<0.5ex>[d]^{p_X}\\
Z\ar[r]^i\ar@<0.5ex>[u]^{j^Z} &X\ar@<0.5ex>[u]^{j^X}
}\end{equation}
such that \begin{itemize}
\item[(1)] $i\colon Z\hookrightarrow X$, $i^\prime\colon Z^\prime\hookrightarrow X^\prime$ are closed embeddings of smooth affine schemes,
\item[(2)] $X^h_Z=U$; 
\item[(2)] $N_{Z^\prime/X^\prime}$, and $T_{Z^\prime}$ are trivial vector bundles (of constant rank) on $Z^\prime$; 
\item[(3)] both squares in \eqref{diag:IdemSq} are commutative, i.e. $p_Z\circ i=i^\prime\circ p_X$, $i^\prime\circ j_Z=j_Z\circ i$;
\item[(4)] $p_Z \circ j_Z\circ \id_Z$, $p_X \circ j_X\circ \id_X$.
\end{itemize}
\end{lemma}
\begin{proof}
By definition $U = X^h_Z$ for some $X\in \SmAff_k$. 
Moreover, 
by Lemma \ref{lm:smoothretraction} we can assume in addition that 
there is a retraction $r\colon X\to Z$.
Since $Z$ is affine there are vector bundles $T^\prime_Z$ and $N^\prime$ on $Z$ such that 
$T_Z\oplus \hat T_Z$ and $N_{Z/U}\oplus \hat N$ are trivial.
Let $\widetilde T=r^*(T_Z)$ and $\widetilde N=r^*(N_{Z/U})$. Then $\widetilde i^*(\widetilde T)=T^\prime$, $\widetilde i^*(\widetilde N)=N^\prime$.

Define an $Z$-scheme $Z^\prime=\widetilde T$ given by the total space of the vector bundle over $Z$, and similarly define an $X$-scheme $X^\prime=\widetilde T\oplus \widetilde N$.
Then $T_{Z^\prime}$ and $N_{Z^\prime/X^\prime}$ are equal to the inverse images of the vector bundles 
$T^\prime\oplus T_{Z}$ and $N^\prime\oplus N_{Z/X}$. 
Hence $T_{Z^\prime}$ and $N_{Z^\prime/U^\prime}$ are trivial.
\end{proof}

\begin{lemma}\label{lm:filtrcodone}
Let $Z\subset U$ be affine smooth henselian pair 
with $U$ of pure dimension over a field $k$; 
and assume the bundles $T_Z$ and $N_{Z/U}$ are trivial of constant rank on $Z$.
Then there is a sequence $Z=V_0\subset V_1\subset \dots\subset V_{n-1}\subset V_{n}=U$ of essentially smooth closed subschemes,
$\dim V_i = \dim Z + i$, $N_{V_i/V_{i+1}}\simeq O_{V_i}$.
\end{lemma}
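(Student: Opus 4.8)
The plan is to build the filtration one step at a time, thinking of $U$ as the henselization of a smooth affine scheme $\widetilde U$ along $Z$ and using the triviality of $N_{Z/U}$ to split off one coordinate at each stage. First I would fix, as in the construction preceding Lemma \ref{lm:Thereiscovering}, a smooth affine $\widetilde U$ with $Z\hookrightarrow\widetilde U$ a closed embedding and $U=(\widetilde U)^h_Z$, together with a retraction $r\colon\widetilde U\to Z$ (available after Lemma \ref{lm:smoothretraction}); after shrinking $\widetilde U$ Nisnevich-locally around $Z$ we may assume $\dim\widetilde U=\dim Z + n$ and that $N_{Z/\widetilde U}$ is trivial, say with a chosen trivialization inducing the given one on $N_{Z/U}$. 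Because $N_{Z/\widetilde U}$ is trivial of rank $n$ and $\widetilde U$ is smooth affine, one of the $n$ defining equations of $Z$ in $\widetilde U$ cuts out (again after a Nisnevich shrinking around $Z$) a smooth closed hypersurface $\widetilde V_{n-1}\subset\widetilde U$ containing $Z$, with $N_{\widetilde V_{n-1}/\widetilde U}$ trivial of rank $1$ and $N_{Z/\widetilde V_{n-1}}$ trivial of rank $n-1$; passing to henselizations along $Z$ gives $V_{n-1}=(\widetilde V_{n-1})^h_Z\subset U$, an essentially smooth closed subscheme of dimension $\dim Z+n-1$ with $N_{V_{n-1}/U}\simeq\cO_{V_{n-1}}$.

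Next I would iterate: $V_{n-1}$ is again a smooth affine henselian pair with $Z$ (now with trivial normal bundle $N_{Z/V_{n-1}}$ of rank $n-1$, and $T_Z$ trivial by hypothesis), presented as the henselization of the smooth affine $\widetilde V_{n-1}$ along $Z$. Applying the same hypersurface-cutting step inside $\widetilde V_{n-1}$ produces $\widetilde V_{n-2}$ and hence $V_{n-2}=(\widetilde V_{n-2})^h_Z\subset V_{n-1}$ with $N_{V_{n-2}/V_{n-1}}\simeq\cO_{V_{n-2}}$. Repeating $n$ times yields the chain $Z=V_0\subset V_1\subset\cdots\subset V_{n-1}\subset V_n=U$ with $\dim V_i=\dim Z+i$ and $N_{V_i/V_{i+1}}\simeq\cO_{V_i}$, which is exactly the assertion. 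The triviality of $T_Z$ is used to guarantee that at each stage the triviality of the relevant conormal pieces can be arranged compatibly (equivalently, that $T_{V_i}$ stays stably trivial so the splitting of conormal bundles is unobstructed on the affine schemes in question).

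The main obstacle is the step "one defining equation cuts out a smooth hypersurface through $Z$ with the right normal bundles," i.e.\ choosing the $i$-th equation so that $\{f_i=0\}$ is smooth in a Nisnevich neighborhood of $Z$ and its conormal bundle restricts correctly; a priori a single section of a trivial rank-$n$ normal bundle need not have a smooth zero locus globally on $\widetilde U$, only along $Z$ itself. The remedy is that we only need smoothness in a Nisnevich (indeed Zariski) neighborhood of $Z$: the zero locus of $f_i$ is automatically smooth at points of $Z$ because $df_i$ is a coordinate of the chosen trivialization of the conormal bundle there, so it is smooth on an open $\widetilde U'\ni Z$, and we replace $\widetilde U$ by $\widetilde U'$ without changing $U=(\widetilde U)^h_Z$. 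One should also check that the resulting $V_i$ are genuinely essentially smooth and that the successive henselizations along $Z$ agree with $(\widetilde V_i)^h_Z$; this is routine since henselization commutes with passing to a closed subscheme defined by part of a regular system of parameters. The bookkeeping that $N_{V_i/V_{i+1}}$ is free of rank one (not merely invertible) is also immediate, since it is the restriction of the free rank-one bundle $N_{\widetilde V_i/\widetilde V_{i+1}}$.
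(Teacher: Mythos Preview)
Your proposal is correct and follows essentially the same idea as the paper's proof: both arguments use the triviality of $N_{Z/U}$ to choose $n$ functions $f_1,\dots,f_n$ vanishing on $Z$ whose differentials give a basis of the conormal bundle, and then define $V_{n-i}$ as the vanishing locus of $f_1,\dots,f_i$ inside $U$. The paper does this in one stroke by first embedding a smooth affine model $U'$ into $\A^N_k$, choosing a basis of $T_{\A^N}|_Z$ adapted to the flag $T_Z\subset T_{U'}|_Z\subset T_{\A^N}|_Z$, passing to the dual basis, lifting the relevant covectors to global functions $f_i\in k[\A^N]$, and setting $V_{n-i}=Z(f_1,\dots,f_i)\times_{\A^N}U$; you do the same thing iteratively and directly on $\widetilde U$ rather than on an ambient affine space. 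Your extra care in shrinking $\widetilde U$ Zariski-locally around $Z$ to force smoothness of each hypersurface, and your remark that henselization along $Z$ commutes with passing to a closed subscheme cut by part of a regular sequence, make explicit two points the paper leaves implicit. The only loose end is your explanation of where the hypothesis ``$T_Z$ trivial'' enters: it is not actually needed for the filtration itself (only $N_{Z/U}$ trivial is used), and your ``stably trivial $T_{V_i}$'' remark is more relevant to the subsequent application in Proposition~\ref{prop:RigidityCodone} than to this lemma.
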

\begin{proof}
Since $T_Z$ and $N_{Z/U}$, the bundle $T_U$ is trivial too.
Let $U^\prime$ be an affine scheme and $Z\hookrightarrow U^\prime$ be a closed immersion such that $U = (U^\prime)^h_Z$, and $T_{U^\prime}$ is trivial.
Then there is a closed immersion $U^\prime\subset \A^N_k$ such that $N_{U^\prime/\A^N_k}$ is trivial.
Let $t_1\dots t_{\dim Z}$ be the basis of $T_Z$, $t_{\dim Z+1}\dots t_{\dim U}$ be the basis of $N_{Z/U}$, 
$t_{\dim U+1}\dots t_{N}$ be the basis of
$N_{U_i/\A^N_k}$.
Let $\hat t_i$ denotes the dual basis of $\Omega(\A^N_k)\big|_{Z}$.
Choose a lift $f_i\in \calO[\A^N_k]$ of $\hat t_i$. 
The desired filtration is given by $Z(f_1\dots f_i)\times_{\A^{N}_k} U = V_{n-i}$.
\end{proof}

\begin{theorem}\label{th:Rigidity}
Suppose $Z\hookrightarrow U$ is an affine smooth henselian pair over a field $k$, 
and $F\colon Sm_k\to Ab$ is an $l_\varepsilon$-torsion homotopy invariant $\sigma$-stable linear framed presheaf for some $l\in \mathbb Z$, $l\in k^\times$.
Then the inverse image homomorphism induces the isomorphism $F(U)\simeq F(Z)$.
\end{theorem}
\begin{proof} 
Without loss of generality we can assume that $U$ is connected.
If $U=U_1\amalg U_2$ and the claim holds for the henselian pairs $Z\cap U_i\hookrightarrow U_i$, $i=1,2$, then the claim follows for the pair $Z\hookrightarrow U$ as well.

Since $Z\to U$ is a henselian pair, if $U$ is connected, then $Z$ is connected.
Since $U$ and $Z$ are smooth, $Z$ and $U$ are locally of constant dimension. So $T_Z$ and $N_{Z/U}$ are vector bundles of constant rank on $Z$.

By Lemma \ref{lm:Thereiscovering} 
the henselian pair $Z\to U$
is a retract of a henselian pair
$Z^\prime\to U^\prime$
\begin{equation}\label{diag:RetractMorUsing}\xymatrix{
Z^\prime\ar[r]^{i^\prime}\ar@<0.5ex>[d]^{p_Z} &U^\prime\ar@<0.5ex>[d]^{p_U}\\
Z\ar[r]^i\ar@<0.5ex>[u]^{j^Z} &U\ar@<0.5ex>[u]^{j^U}.
}\end{equation}
such that $N_{Z^\prime/X^\prime}$, and $T_{Z^\prime}$ are trivial vector bundles of constant rank on $Z^\prime$.
%
Then by Lemma \ref{lm:filtrcodone} there is a filtration 
\[Z^\prime=U^\prime_0\hookrightarrow U^\prime_1\hookrightarrow \dots \hookrightarrow U^\prime_{n-1}\hookrightarrow U^\prime_n=U^\prime,\]
$\codim_{U^\prime_{i+1}} U^\prime_i=1$, $N_{U^\prime_{i+1}/U^\prime_i}\simeq \mathbf{1}_{U^\prime_i}$.

Now by Proposition \ref{prop:RigidityCodone} 
\[F(Z^\prime)=F(U^\prime_1)=\dots =F(U^\prime_{i-1})=F(U^\prime).\]
Since the morphism $i$ is a retract of $i^\prime$, see \eqref{diag:RetractMorUsing}, the morphism \[i^*\colon F(U)\to F(Z)\] is a direct summand of the morphism \[(i^\prime)^*\colon F(U^\prime)\to F(Z^\prime).\]
Whence since $(i^\prime)^*$ is an isomorphism by the above, it follows that $i^*$ is an isomorphism.
\end{proof}

\subsection{Corollaries.}

\begin{corollary}
Let $k$ be a field and $l\in \mathbb Z$, $(l,\chark k)=1$.
Then for an $l$-torsion homotopy invariant $\sigma$-stable linear framed presheaf $F$ and an affine smooth henselian pair $Z\subset U$ over $k$
we have $F(U)\simeq F(Z)$. 
\end{corollary}
\begin{proof}
Let $2\in k^\times$. The presheaf $F$ is $2l_\varepsilon$-torsion presheaf, since $2l_\varepsilon=2h$, and $(2l,\chark k)=1$.
Let $2=0\in k$. Then $F$ is $l_\varepsilon$-torsion, since $l_\varepsilon=l$.
Hence in both cases $F(U)\simeq F(Z)$ by Theorem \ref{th:Rigidity}.
\end{proof}
\begin{corollary}\label{cor:phiGW}
Let $i\colon Z\hookrightarrow U$ be a smooth affine henselian pair over a field $k$.
Let $E\colon \SH(k)\to Ab^\mathrm{op}$ be an additive presehaf (of abelian gorups) on the stable motivic homotopy category $\SH(k)$. 
Consider the inverse image homomorphism
$i^*\colon E(U)\to E(Z)$ is an isomorphism,
where $E(Z)$ and $E(U)$ is the value of the presheaf on $\Sm_k$ induced by $E$, and $E(U)$ is the stalk on $U$.


Then if $\phi E=0$ for some 
$\phi\in \GW(k)$, 
under the meaning of the homomorphism $\GW(k)\to [\mathbb S,\mathbb S]_{\SH(k)}$, 
such that $\rank \phi$ is invertible in $k$,
then $i^*$ is an isomorphism.
\end{corollary}
\begin{proof}
The case of the presheaf $E$ such that $l_\varepsilon E$ for some $l\in \mathbb Z$, $l\in k^\times$,  
follows from Theorem \ref{th:Rigidity} in view of Lemma \ref{lm:EovZFSHandstable} 
and Remark \ref{rem:l-epsilonTorsionEinSH}.
%

Consider the case $\phi E=0$ for a quadratic space $\phi$ as in the corollary,
and $\chark  k\neq 2$.
Let $h =\langle 1\rangle +\langle -1\rangle\in \GW(k)$ be the hyperbolic form of rank 2, and define $l_\varepsilon\in \GW(k)$ as $\sum_{i=0}^l \langle (-1)^i\rangle$. 
One has $\phi h = lh=(2l)_\varepsilon$ for $l = \rank \phi$, hence $(2l)_\varepsilon E = \phi hE = 0$. 
Since $\chark  k\neq 2$, the claim follows by the above.

Let $\chark k = 2$. Then $h=2$, and $l_{\varepsilon}=l$, for any $l\in \mathbb Z$. 
Recall that in the Witt ring $W(k)$ every element can be represented by a diagonal form whence for every element in $\Witt(k)$ its square is either 0 or 1. 
Since the rank of $\phi$ is odd, we have $\phi^2=1$ in $\Witt(k)$. 
Then $\phi^2=1+2m$  in $\GW(k)$, where $m= (\rank\phi^2-1)/2$, and so $\phi^2=l_\varepsilon^2$ with $l=\rank\phi$.
Now the claim follows, since $l^2_\varepsilon E = \phi^2E = 0$.
\end{proof}

\begin{remark}
The suggestion to replace the $n$-torsion condition, $n\in\mathbb Z$, 
by torsion with respect to some class of 
elements in $[\mathbb S,\mathbb S]_{\SH(k)}$, 
and expection that rigidity results should hold for 
such presheaves
was made originally by A.~Ananievskiy.
The first version of \cite{AD-Rigid} concerned $n$-torsion and $n_\epsilon$-torsion presheaves.

The argument how to prove 
the result of \cite{AD-Rigid} for $\phi$-torsion for an arbitrary quadratic form $\phi\in \GW(k)$
was founded firstly by T.~Bachamnn
And then A.~Ananievskiy had shown that this case is actually equivalent to the $n_\epsilon$-torsion case.
In the article we use this last short argument.
\end{remark}

\appendix

\section{Appendix: Serre's theorem on ample line bundles}
In the subsection we write a consequence of the Serre theorem on ample bundles.
\begin{proposition}(A particular case of Serre's theorem on ample bundles)\label{prop:CorofSerreTh}
Let $\calO(1)$ be an ample invertible sheaf on a scheme $X$, and $\calL$ be an invertible sheaf on $X$.
Then for any $Z\subset X$ closed subscheme then there is $N\in \mathbb Z$
the restriction homomorphism is surjective $\Gamma(X,\calL(l))\twoheadrightarrow \Gamma(Z,\calL(l))$, $\forall l>N$.
Here $\calL(l)=\calL \otimes\calO(l)$.
\end{proposition}
\begin{proof}
Consider the short exact sequence of coherent sheaves on $X$
\begin{equation}\label{eq:restrcohshILiiL}
0\to \mathcal I \to \calL\xrightarrow{r} i_* i^*(\calL)\to 0, 
\end{equation} where $i\colon Z\hookrightarrow X$.
By Serre's theorem \cite[theorem 5.2]{Ha77} it follows that $H^1(X,\mathcal I(l))=0$ for all $l$ large enough.
Then the long exact sequence of homologies for \eqref{eq:restrcohshILiiL} gives the surjection
\[\Gamma(X,\calL(l))\twoheadrightarrow \Gamma(X,i_* i^* \calL(l)) = \Gamma(Z,\calL(l)).\]
\end{proof}


\section{Appendix: Matrix homotopies}

Let $Z\hookrightarrow X$ be a closed immersion of affine schemes.
In this section we prove that 
if a matrix $C$ over $\calO_X(X)$ 
equals to the identity on $Z$, 
then $C$ is an elementary matrix on $X^h_Z$.

Secondly, we write the proof 
for the fact that any elementary matrix
on an affine scheme 
is $\A^1$-homotopy equivalent to the identity.

\subsection{Elementary transformations, and classes of isomorphisms.}

Let $X$ be an affine scheme.
For any vector bundles $N$ and $N^\prime$ on $X$ we 
denote by $\Hom(N,N^\prime)$ the set of homomorphisms, 
denote by $\Iso(M,N^\prime)$ the set of isomorphisms.
$\Aut(N)=\Iso(N,N)$ is the group of automorphisms.
In particular, $\GL_n(X)=\Iso(\mathbf 1^n_X, \mathbf 1^n_X)$, where $\mathbf 1^n_X$ denote the free vector bundle of the rank $n$.

Let $w\colon N\cong N_1\oplus N_2$.  
Then every $C\in \Hom(N_2,N_1)$ defines an automorphism $T\in\Aut(N)$ given by the matrix
\begin{equation}\label{eq:T(idCid)}
T=e_{w}(C)=\big(
\begin{smallmatrix} \id_{N_1} & C\\ 0& \id_{N_2} \end{smallmatrix}
\big).
\end{equation}
\begin{definition}
For any vector bundle $N$ on $X$ 
we denote by $\El_N(X)$ the subgroup of $\Iso(N,N)$ 
generated by the elements $T$ as above for all splitings? $N\cong N_1\oplus N_2$.
\end{definition}

In particular, if $N=\mathbf{1}^n_X$ the group $\El_N(X)=\El_n(X)$ is generated by the elementary transformations.
\begin{definition}
An elementary transformation $e_{i,j}(\lambda)\in \GL_n(X)$, for $i,j=1,\dots ,n$, $\lambda\in \calO_X(X)$ is an automorphism of $\mathbf 1^n_X$ 
defined by the action on the basis elements $t_1$:
\[t_l\mapsto t_l, i\neq i, t_i\mapsto t_i+\lambda t_j.\]
\end{definition}

\begin{definition}\label{def:[N,Nprime]E}
Denote by $[N, N^\prime]_E$ the set of classes in $\Iso(N, N^\prime)$ with respect to the action of $\El_N(X)$. 
\end{definition}

Let 
\begin{equation}\label{eq:NN1N2exactsequence}N_1\hookrightarrow N\twoheadrightarrow N_2\end{equation}
be a short exact sequence of vector bundles on an affine scheme $X$.
Let 
\[\mu\colon N_2\simeq \mathbf{1}^{n_1}_X, \nu\colon N_1\simeq \mathbf{1}^{n_2}_X\]
be isomorphisms.
Since $X$ is affine there is a splitting
\begin{equation}\label{eq:NN1N2splitting}N\cong N_1\oplus N_2\end{equation}
\begin{definition}\label{def:[mu,nu]}
Denote by 
\begin{equation}\label{eq:[mu,nu]}[\mu,nu]\in [N,\mathbf{1}^n_X]_E,\end{equation} 
for $n=n_1+n_2$, the class of the isomorphism 
\[\tau=mu\oplus \nu\colon N\simeq \mathbf{1}^n_X.\] 
%
\end{definition}

\begin{lemma}\label{lm:[mu,nu]}
The class \eqref{eq:[mu,nu]} is independent of the choice of the splitting \eqref{eq:NN1N2splitting}.
\end{lemma}
\begin{proof}
Fix one splitting $w_1\colon N= N_1\oplus N_2$, and let 
$w_2\colon  N\cong N_1\oplus N_2$ be another one agreed with \eqref{eq:NN1N2exactsequence}.
Then the automorphism $w_2^{-1} w_1\in \GL_N(X)$ 
in view of the splitting $w_1$
is given by a matrix of the form \eqref{eq:T(idCid)} for some $C\in \Hom(N_2,N_1)$.
So the claim follows.
\end{proof}

\begin{lemma}\label{lm:diagSLinEn}
Let $R$ be a ring, and $B=\langle \lambda_1,\dots \lambda_n\rangle\in \SL_n(R)$ be a diagonal matrix, $\lambda_1\cdots\lambda_n=1$.
Then $B$ is an elementary matrix.
\end{lemma}
\begin{proof}
Any matrix as in the lemma is a product of matrices of the form 
\[m_{i,j}(\lambda)=\langle 1,\dots 1,\lambda, 1,\dots,1,\lambda^{-1}, 1,\dots 1\rangle\] 
with $\lambda$ and $\lambda^{-1}$ at $i$-th and $j$-th positions.
Then we using the equality
$m_{i,j}(\lambda) = E_{j,i}(\lambda^{-1}-1) E_{i,j}(-1) E_{j,i}(\lambda-1) E_{i,j}(-\lambda^{-1})$
we get the claim.
\end{proof}

\subsection{Elementary matrices on henselian schemes}

\begin{lemma}\label{lm:ElemHomdiagdetectinvrad}
Let $C\in \mathrm{GL}_{n\times n}(R)$ for a ring $R$.
Suppose that 
\[\begin{cases}
c_{i,i}\in R^\times, i=1,\dots n,\\ 
c_{i,j}\in J(R), i\neq j,
\end{cases}\]
the coefficients $c_{i,i}\in R^\times$ are invertible in $R$ for $i=1,\dots n$, and 
$c_{i,j}\in J(R)$ are in the Jacobson radical, $i\neq j$.
Then $C\sim (\Jac(C),1,\dots 1)$ denotes the Jacobian.
\end{lemma}
\begin{proof}
We prove the claim by induction on $n$. 
To do this we modify $C$ by elementary transformations 
to get the diagonal matrix 
$(C^\prime, 1)$, for $C^\prime\in \GL_{n-1}(X)$ that satisfy the same properties as in the lemma. 

Denote $C_0=C$ and then the required equivalence is provided by the sequence
\[\begin{array}{lll}
C\sim C_1 &=& C_0\circ e_{n-1,n}(c_{n-1,n}c_{n,n}^{-1})\circ \dots \circ e_{1,n}(c_{1,n}c_{n,n}^{-1}),\\
C\sim C_2 &=& e_{n,1}(c_{1,n}c_{n,n}^{-1})\circ \dots \circ e_{n,n-1}(c_{n,n-1}c_{n,n}^{-1}) \circ C_1,\\
C\sim C_3 &=& C_2\circ e_{n,n-1}(1-c_{n,n}),\\
C\sim C_4 &=& C_3\circ e_{n-1,n}(1).
\end{array}\]
\end{proof}

\begin{corollary}\label{cor:henspairElemHomdiagdetectinvrad}
Let $i\colon Z\to X$ be a closed immersion that defines an affine henselian pair.
Let 
$C\in \mathrm{GL}_{n\times n}(X)$ be such that
$i^*(C)=\Id_n$ is identity.
Then $C$ is elementary, i.e. $C\in \El_n(X)$.
\end{corollary}
\begin{proof}
Due to Lemma \ref{lm:ElemHomdiagdetectinvrad} the claim follows since the vanishing ideal of $Z$ in $\mathcal O_X(X)$ is contained in the Jacobson radical. Actually for any $v\in \calO_X(X)$ such that $i^*(v)=0$ the function $1+v$ is invertible on $X$.
\end{proof}


\subsection{$\A^1$-homotopies for elementary matrices}
For any vector bundle $N$ on a scheme $X$ denote by $\mathbb Z\GL_N(X)$ the free abelian group generated by the set of elements of $\GL_N(X)$.

\begin{lemma}\label{lm:ElementaryMatHomotopy}
Let $E\in \mathrm \El_N(X)$ be an elementary matrix for some vector bundle $N$ on a scheme $X$.
Then there is 
$H\in \mathbb Z\GL_N(X\times\A^1)$, 
such that 
$i^*_0(H)=\id_N$, $i^*_1(H)=E$, 
where 
$i_0,i_1\colon X\to X\times\A^1$ 
denote the zero and unit sections.
\end{lemma}
\begin{proof}
Consider a matrix $T=e_{w}(\lambda C)$ 
of the form \eqref{eq:T(idCid)} 
for some splitting $w\colon N\cong N_1\oplus N_2$.
Then the matrix 
\[e_{w}(\lambda C)=\big(\begin{smallmatrix} \id_{N_1} & \lambda C\\ 0& \id_{N_2}\end{smallmatrix}\big)\]
gives the required homotopy.

In general, for a matrix 
$T=e_{w_l}(C_l)\dots e_{w_0}(C_0)$ given by the product of matrices of the form \eqref{eq:T(idCid)}
the required homotopy is given by
\[\sum_{i=0}^l e_{w_i}(\lambda C_i) T_i,\] where $T_i=e_{w_i}(C_i)\dots e_{w_0}(C_0)$.
\end{proof}


%
%
%



\begin{thebibliography}{XXXXX}

\bibitem[AK70]{AK}
A.~Altman, S.~Kleiman, 
\emph{Introduction to Grothendieck Duality Theory}, 
Lecture Notes in Mathematics, 146, Springer (1970)

\bibitem[An18]{Ananyevskiy-zthstpiA1(smcurve)}
A.~Ananyevskiy, On the zeroth stable A1-homotopy group of a smooth curve,
Journal of Pure and Applied Algebra,
222 (10), 2018, 3195-3218



\bibitem[AD18]{AD-Rigid}
A.~Ananyevskiy, A.~Druzhinin,
\emph{Rigidity for linear framed presheaves and generalized motivic cohomology theories},
Advances in Mathematics, Vol 333, 31, pp 423-562, July 21 (2018) 




\bibitem[Ayo14]{Ayo13} J. Ayoub. La r\'ealisation \'etale et les op\'erations de grothendieck. Ann. Sci. \'E.N.S., (4), 47(1):1-145, 2014.


\bibitem[BK18]{BiKr18}
F.~Binda, A.~Krishna,
\emph{Rigidity for relative 0-cycles},
arXiv:1802.00165v2 (2018)

\bibitem[Ba16]{Ba16}
T.~Bachmann,
\emph{Motivic and Real Etale Stable Homotopy Theory,}
arXiv:1608.08855 

\bibitem[Ba18]{BachmannRigidSHet} 
T.~Bachmann,
Rigidity in \'etale motivic stable homotopy theory,
http://arxiv.org/abs/1810.08028v2

\bibitem[CTHK]{CTHK}
Jean-Louis Colliot-Th\'elene, Raymond T Hoobler, and Bruno Kahn,
The Bloch-Ogus-Gabber theorem, 
Algebraic K-Theory (Toronto, ON, 1996), 
16, 31-94, (1997). 

\bibitem[CD16]{CD16} 
D.~Cisinski and F.~D\'eglise. 
\'Etale motives. Compositio Mathematica, 152(3):556-666, 2016.

\bibitem[CMM18]{CMM18}
D.~Clausen, A.~Mathew, M.~Morrow,
\emph{K-theory and topological cyclic homology of henselian pairs},
arXiv:1803.10897v1

\bibitem[JP20]{JPAdddoublecos}
R. Joshua, P. Pelaez, Additivity and double coset formulae for the motivic and \'etale Becker-Gotlieb transfer, arxiv:2007.02249v1 (Jul 2020).

\bibitem[Cox78]{Cox78}
D. Cox, Algebraic Tubular Neighborhoods: II, Math Scand., 42, (1978), 229-242.

\bibitem[D.Wtr]{D-WittRigid}
A.~Druzhinin,
Rigidity theorem for presheaves with Witt-transfers,
to appear in St. Petersburg Mathematical Journal, arXiv:1704.03784 (2017)

\bibitem[Elk73]{Elkiksoleqhens}
Elkik R. Solutions d’\'equations \`a coefficients dans un anneau h\'enselien,
Annales scientifiques de l’\'E.N.S., 6, 553-603 (1973)


\bibitem[Fried08]{FriedlanderFrielanderMilnorConj} E.~Friedlander, The Friedlander-Milnor conjecture, L\'enseignment Math\'ematique, 54(2):90-92 (2008). 

\bibitem[Gab92]{Gab92}
O.~Gabber,
\emph{K-theory of Henselian local rings and Henselian pairs,}
Contemp. Math. 126: 59 -- 70 (1992)

\bibitem[Gab94]{Gab94}
O.~Gabber,
\emph{Affine analog of the proper base change theorem},
O. Israel J. Math. 87: 325 (1994) 

\bibitem[Gab94Gersten]{Gab94GerstConjsomecomplevancyc}
Ofer Gabber, 
Gersten's conjecture for some complexes of vanishing cycles, 
Manuscripta Mathematica, 
85, 1, 323-343, (1994); DOI10.1007/BF02568202.

\bibitem[GD67]{GD67}
A.~Grothendieck, J.~Dieudonn\'e,
\emph{\'El\'ements de G\'eom\'etrie Alg\'ebrique IV. \'Etude locale des sch\'emas et des morphismes de sch\'emas (Troisi\`eme Partie),}
Inst. Hautes \'Etudes Sci. Publ. Math. 28 (1967)



\bibitem[Hub93]{HEtfin}
Huber R.
Étale cohomology of Henselian rings and cohomology of abstract Riemann surfaces of fields
Math. Ann. 295 (4),
703-708, 1993.

\bibitem[GP14]{GP14}
G.~Garkusha, I.~Panin,
\emph{Framed motives of algebraic varieties (after V. Voevodsky),}
arXiv:1409.4372 (2014)


\bibitem[Geisser00]{Greview00}
T.~Geisser, 
Algebraic K-theory of Henselian pairs,
RIMS Kokyuroku, 2000.

\bibitem[GH04]{GeHe04}
T.~Geisser and L.~Hesselholt,
\emph{On the K-theory and topological cyclic homology of smooth schemes over
a discrete valuation ring}, 
Transactions of the American Mathematical Society,
Volume 358, Number 1, Pages 131--145 (2004)

\bibitem[GT84]{GT84}
H.~Gillet, R.~Thomason,
\emph{The $\mathbf K$-theory of strict hensel local rings and a theorem of Suslin,}
Journal of Pure and Applied Algebra 34, 241 -- 254 (1984)

\bibitem[Ha77]{Ha77}
R.~Hartshorne,
\emph{Algebraic geometry,}
New York: Springer-Verlag, 1977

\bibitem[HR\O 18]{HRO1Oct}
J.~Heller, C.~Ravi, P.~A.~\O stvaer,
\emph{Rigidity for equivariant pseudo pretheories},
Journal of Algebra
Volume 516, Pages 373--395 (2018)

\bibitem[HY07]{HY07}
J.~Hornbostel, S.~Yagunov,
\emph{Rigidity for Henselian local rings and $\A^1$-representable theories,}
Math. Z. 255, 437 -- 449 (2007)

\bibitem[Jar86]{JardineFriedMilnConjRigidityHGl} 
J.F.~Jardine, Simplicial objects in a Grothendieck topos, Contemp. Math. 55: 193-239 (1986)

\bibitem[Jar00]{Jar00}
J.~F.~Jardine,
\emph{Motivic symmetric spectra,}
Doc. Math., 5, 445--552 (2000)


\bibitem[Kn98]{K98}
K.~Knudson,
\emph{Rigidity for the low-dimensional homology of the linear group}, arXiv:math/9801098 (1998)


\bibitem[MilneEtCoh]{Milne-EtCoh}
J.S. Milne. \emph{Etale Cohomology}. Princeton Univ. Press, Princeton, NJ (1980)

\bibitem[Milnor53]{MilnorFrielanderMilnorConj} 
J.~Milor, On the homology of Lie groups made discreet, Comment. Math. Helvetici, 58 (1953) 072-085.




\bibitem[Mor0]{Mor0}
Fabien Morel,
An introduction to $\A^1$-homotopy theory, 
ICTP Trieste, 
(2002).

\bibitem[Mor1]{Mor1} 
Fabien Morel, 
The stable-connectivity theorems, 
K-theory, 35, 1, 1-68, (2005).

\bibitem[Mor04]{Mor04}
F.~Morel \emph{On the Motivic $\pi_0$ of the Sphere Spectrum}. 
In: Greenlees J.P.C. (eds) Axiomatic, Enriched and Motivic Homotopy Theory. 
NATO Science Series (Series II: Mathematics, Physics and Chemistry), vol 131. Springer
Dordrecht  (2004)

\bibitem[Mor12]{Mor12}
F.~Morel,
\emph{A1-Algebraic Topology over a Field}, 
Lect. Notes in Math. 2052, Springer Verlag (2012)


\bibitem[VPS15]{GrothSerre-PSV}
I.~Panin, A.~Stavrova, N.~Vavilov
On Grothendieck–Serre’s conjecture concerning principal $G$ -bundles over reductive group schemes: I Compositio Mathematica, 151 (3), 2015 , pp. 535-567

\bibitem[Pan17]{Panin-NiceTrip}
Ivan Panin, 
Nice triples and moving lemmas for motivic spaces,
Izvestiya RAN. Ser. Mathematics, 84, 4, 158-193, (2017); DOI 10.1070/IM8982, zbl 07110605.


\bibitem[Mor11]{Mor11}
F.~Morel,
\emph{On the Friedlander-Milnor conjecture for groups of small rank,} 
Contemporary developments in algebraic $K$-theory, 357--441, 
Current Developments in Mathematics 2010, 45 --93 (2011)

\bibitem[MV99]{MV99}
F.~Morel, V.~Voevodsky,
\emph{$\A^1$-homotopy theory of schemes,}
Publ. Math. IHES, 90, p. 45-143 (1999)

\bibitem[NR19]{RigidityEquivariantAlgKtheoryNimanRavi}
N.~Naumann, C.~Ravi 
Rigidity in equivariant algebraic $ K $-theory
May 9, 2019 http://arxiv.org/abs/1905.03102v1

\bibitem[N15]{N15}
A~.Neshitov. Rigidity theorem for presheaves with $\Omega$-transfers. A Neshitov. St. Petersburg Mathematical Journal 26 (6), 919--932, (2015)

\bibitem[PY02]{PY02}
I.~Panin, S.~Yagunov, 
\emph{Rigidity for orientable functors,}
Journal of Pure and Applied Algebra 172, 49 -- 77 (2002)

\bibitem[PanGS]{Pan-GSConj}
Ivan Panin, 
Proof of Grothendieck-Serre conjecture on principal bundles over regular local rings containing a finite field, 
Izvestiya RAN. Ser. Mathematics, 
Forthcoming paper (Mi izv8982)	


\bibitem[Pip17]{PoppiPBchAffHensPair}
Pippi M.,
Proper base change over henselian pairs,
arXiv:1710.01908

\bibitem[Q73]{Qillenstrick} 
D. Quillen, Higher algebraic K-theory I, in “Algebraic K-theory I”, 85-147, Lecture Notes in Math. Vol. 341. Springer-Verlag, Berlin, 1973.

\bibitem[R\O 08]{RO08}
O.~R\"ondigs and P.~\O stvaer
\emph{Rigidity in motivic homotopy theory,}
Math. Ann. 341, 651-675 (2008)

\bibitem[Ros96]{RosRigidKthquatization}
J.~Rosenberg, Rigidity of K-theory under deformation quantization, Operator algebras and quantum field theory (Rome 1996), Internat. Press, 1997, pp. 404-415.

\bibitem[StacksProject]{StacksProject},
The Stacks Project Authors,
Stacks Project,
https://stacks.math.columbia.edu,
2018.
  
\bibitem[Sus83]{Sus83}
A.~Suslin, 
\emph{On the $\mathbf K$-theory of algebraically closed fields,}
Invent. Math. 73, no. 2, 241--245 (1983)

\bibitem[Sus84]{Sus84}
A.~Suslin, \emph{On the K-theory of local fields}, J. of Pure Appl. Algebra {\bf 34, (2-3)}, 301--318  (1984)


\bibitem[SV96]{SV96}
A.~Suslin, V.~Voevodsky,
\emph{Singular homology of abstract algebraic varieties,}
Invent. Math., 123:1, pp. 61--94 (1996)


\bibitem[VSF00]{VSF00} 
Voevodsky, V.,
Triangulated categories of motives over a field in 
Cycles, Transfers and Motivic homology theories by V. Voevodsky, A. Suslin, and E. M. Friedlander, volume 143 of Annals of Mathematics Studies. Princeton Univ. Press, 2000.

\bibitem[V01]{Voev-FramedCorrM}
V.~Voevodsky,
\emph{Notes on framed correspondences,} math.ias.edu/vladimir/files/framed.pdf (2001)

\bibitem[Ya04]{Ya04}
S.~Yagunov,
\emph{Rigidity II: Non-orientable case,}
Doc. Math., vol. 9, pp. 29--40
(2004)

\end{thebibliography}
\end{document}